\newcommand\SL{\operatorname{SL}}
\newcommand\GL{\operatorname{GL}}
\newcommand\SO{\operatorname{SO}}
\newcommand\N{{\mathbb N}}
\newcommand\Z{{\mathbb Z}}
\newcommand\Q{{\mathbb Q}}
\newcommand\R{{\mathbb R}}
\newcommand\C{{\mathbb C}}
\newcommand\A{{\mathbb A}}
\newcommand\F{{\mathbb F}}
\newcommand\cO{{\mathcal O}}
\newcommand\E{{\mathcal E}}
\newcommand\D{\mathcal{D}}
\newcommand\qsG{\mathcal{G}}
\newcommand\G{{\mathrm G}}
\newcommand\adG{{\overline{\G}}}
\newcommand\Pa{{\mathrm P}}
\newcommand\cP{{\mathcal P}}
\newcommand\B{{\mathcal B}}
\newcommand\Mv{{\overline{\mathrm M}_v}}
\newcommand\cMv{{\overline{\mathcal M}_v}}
\newcommand\Mvo{{\overline{\mathrm M}_{v_0}}}
\newcommand\cMvo{{\overline{\mathcal M}_{v_0}}}
\newcommand\Kv{K_v^o}
\newcommand\cK{{\mathcal K}}
\newcommand\oG{{\overline{\Lambda}}}
\newcommand\GI{{\Lambda(I)}}
\newcommand\ocP{{\overline{\cP}}}
\newcommand\ru{\mathrm{L}_H(x)}
\newcommand\muu{m_H(x)}
\newcommand\Ker{\mathrm{Ker}}
\newcommand\Comm{\mathrm{Comm}_H\:}
\newcommand\dm{\mathrm{dim}}
\newcommand\rk{\mathrm{rk}}
\newcommand\An{\mathrm{A}}
\newcommand\Bn{\mathrm{B}}
\newcommand\Cn{\mathrm{C}}
\newcommand\Dn{\mathrm{D}}
\newcommand\En{\mathrm{E}}
\newcommand\Fn{\mathrm{F_4}}
\newcommand\Gn{\mathrm{G_2}}
\newtheorem{theorem}{Theorem}%[section]
\newtheorem{problem}{Problem}%[section]
\newtheorem{prop}{Proposition}[section]
\newtheorem{lemma}[prop]{Lemma}
\newtheorem{thm}[prop]{Theorem}
\newtheorem{cor}[prop]{Corollary}
\newtheorem{conj}[prop]{Conjecture}
\newtheorem{claim}[prop]{Claim}
\theoremstyle{definition}
\newtheorem{rem}[prop]{Remark}
\begin{document}

\title{Manifolds counting and class field towers}

\dedicatory{Dedicated to the memory of A. I. Fet}

\author{Mikhail Belolipetsky}
\thanks{Belolipetsky partially supported by EPSRC grant EP/F022662/1}
\address{
IMPA\\
Estrada Dona Castorina, 110\\
22460-320 Rio de Janeiro, Brazil}
\email{mbel@impa.br}

\author{Alexander Lubotzky}
\thanks{Lubotzky partially supported by BSF (US -- Israel), NSF and ERC}
\address{
Institute of Mathematics\\
Hebrew University\\
Jerusalem 91904, Israel}
\email{alexlub@math.huji.ac.il}

\subjclass[2000]{22E40 (20G30, 20E07)}

\date{\today}

\begin{abstract}
In \cite{BGLM} and \cite{GLNP} it was conjectured that if $H$ is a simple Lie group of real rank at least $2$, then the number of conjugacy classes of (arithmetic) lattices in $H$ of covolume at most $x$ is $x^{(\gamma(H)+o(1))\log x/\log\log x}$ where $\gamma(H)$ is an explicit constant computable from the (absolute) root system of $H$. In this paper we prove that this conjecture is false. In fact, we show that the growth is at rate $x^{c\log x}$. A crucial ingredient of the proof is the existence of towers of field extensions with bounded root discriminant which follows from the seminal work of Golod and Shafarevich on class field towers.
\end{abstract}

\maketitle

\section{Introduction}

Let $H$ be a non-compact simple Lie group endowed with a fixed Haar measure
$\mu$, $K$~a maximal compact subgroup of $H$ and $X = H/K$ the associated
symmetric space. A classical theorem of Wang \cite{Wa} asserts that if $H$ is
not locally isomorphic to $\SL_2(\R)$ or $\SL_2(\C)$, then for every $0< x
\in\R$ there exist only finitely many Riemannian orbifolds covered by $X$ with
volume at most $x$. Consequently, if $\mathrm{L}_H(x)$ (resp. $\mathrm{TFL}_H(x)$,
$\mathrm{AL}_H(x)$) denotes the number of conjugacy
classes of lattices (resp. torsion-free lattices, arithmetic lattices) in
$H$ of covolume at most $x$, then $\mathrm{L}_H(x)$ is finite for every $x$.
For $\mathrm{AL}_H(x)$ this is also true even for $H = \SL_2(\R)$ or
$\SL_2(\C)$ by a result of Borel \cite{Bo2}.

In recent years there has been a growing interest in the asymptotic
behavior of these functions (cf. \cite{BGLM}, \cite{G1}, \cite{G2}, \cite{GLNP},
\cite{B} and \cite{BGLS}). Super-exponential upper bounds were given in many
cases, and at least for rank one groups $\SO(n,1)$ these bounds are optimal.

The current paper is devoted to the study of $\mathrm{L}_H(x)$ for groups $H$ with
real rank at least $2$. Here one expects a slower rate
of growth: Recall that in this case, by Margulis's arithmeticity
theorem (see \cite{Ma}), every lattice $\Gamma$ in $H$ is {\em arithmetic}, i.e. there exists
a number field $k$ with ring of integers $\cO$ and the set of archimedean
valuations $V_\infty$, an absolutely simple, simply connected $k$-group $\G$
and an epimorphism $\phi: G = \prod_{v\in V_\infty} \G(k_v) \to H$, such that
$\Ker(\phi)$ is compact and $\phi(\G(\cO))$ is commensurable with $\Gamma$.
Thus for groups $H$ of real rank at least $2$, we have $\mathrm{L}_H(x)=\mathrm{AL}_H(x)$.
Moreover, Serre conjectured (\cite{S1}) that for all lattices $\Gamma$ in such $H$,
$\Gamma$ has the {\em congruence subgroup property} (CSP), i.e.
$\Ker(\widehat{\G(\cO)} \to \G(\widehat{\cO}))$ is finite in the notations above.
Assuming the conjecture, the question of counting lattices in $H$ boils down to counting
arithmetic groups and their congruence subgroups.
A related conjecture which is also relevant for us is
{\em Margulis-Platonov} (MP) {\em conjecture} (cf.~\cite{PR10}). It says that all normal
subgroups of $\G(k)$ are of standard form coming from the non-archimedean
valuations of $k$ with respect to which $\G(k_v)$ is anisotropic (in
particular, $\G(k)$ does not have any noncentral proper normal subgroups if $\G$ is
$k_v$-isotropic for all $v$).

The conjecture of Serre is proved by now for all non-uniform lattices and for ``most''
of the uniform ones, excluding certain cases when $H$ is of type $\An_n$, $\Dn_4$ or
$\En_6$, and the same is also true for MP (see \cite[Chapt.~9]{PR} and \cite{PR10}
for the details and precise statements).
Moreover, very precise estimates for the number of congruence subgroups in a given
lattice are obtained in \cite{Lu}, \cite{GLP} and \cite{LN}, some of these are conditional
on the validity of the generalized Riemann hypothesis (GRH, cf. \cite{We}).
These results led to a conjecture in \cite{BGLM} that for groups $H$ of $\R$-rank$\ge2$,
$\mathrm{L}_H(x)$ grows like $x^{c\log x/\log\log x}$.
In fact, a more precise conjecture is made in \cite{GLNP}, where it is suggested that
\begin{equation}\label{eq_11}
\lim_{x\to\infty} \frac{\log \mathrm{L}_H(x)}{(\log x)^2 / \log\log x} = \gamma(H),\quad
{\text with\ \ } \gamma(H) = \frac{(\sqrt{h(h+2)}-h)^2}{4h^2},
\end{equation}
where $h$ is the Coxeter number of the (absolute) root system corresponding to $H$
(i.e. the root system of the split form of $H$).

\medskip

In this paper we prove that the conjecture is false! The correct rate of growth
is $x^{\log x}$. It is still possible to show that the conjecture is essentially true
if one restricts to {\it non-uniform} lattices for which we refer to  \cite{BL2}.

\begin{theorem} \label{theorem}
Let $H$ be a simple Lie group of real rank at least $2$. Then
\begin{itemize}
\item[(i)] There exists a positive constant $a$ such that $\ru \ge x^{a\log
x}$ for all sufficiently large $x$.
\item[(ii)] Assuming the CSP and MP, there exists a positive constant $b$
such that $\ru \le x^{b\log x}$ for all sufficiently large $x$.
\end{itemize}
\end{theorem}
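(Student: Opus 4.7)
I would prove the two bounds separately.

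For the lower bound (i), the strategy is to exploit Golod-Shafarevich class field towers to build a single arithmetic lattice $\Gamma\subset H$ whose subgroup growth exceeds $x^{a\log x}$. Concretely, I would first choose a number field $k$ with an infinite $p$-class field tower (existence due to Golod-Shafarevich) and an admissible $k$-form $\G$ of $H$ with $\Gamma = \G(\cO)\hookrightarrow H$. Since the tower has bounded root discriminant, Prasad's volume formula yields $\mathrm{covol}(\Gamma) \leq C^{[k:\Q]}$ for some absolute $C$. The crux is to translate the tower into group-theoretic data: via class field theory, a carefully chosen principal congruence subgroup $\Gamma(I)$ should surject onto a pro-$p$ group $Q$ satisfying the Golod-Shafarevich inequality $d(Q)^2 > 4r(Q)$, by capturing the $p$-rank of the ray class group of $k$ at $I$ inside $H^1(\Gamma(I),\F_p)$.

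Given the surjection $\Gamma(I)\twoheadrightarrow Q$, the classical subgroup-growth estimate for GS pro-$p$ groups provides at least $p^{cn^2}$ open subgroups of index $\leq p^n$ in $Q$, and hence the same number of subgroups of $\Gamma$ of index $\leq p^n\cdot [\Gamma:\Gamma(I)]$. Each such subgroup is a sublattice of covolume at most $p^n V_0$, with $V_0 = [\Gamma:\Gamma(I)]\cdot\mathrm{covol}(\Gamma)$ a fixed constant. Setting $x = p^n V_0$ gives $n\sim(\log x)/\log p$, hence a count $\geq e^{c(\log x)^2/\log p} = x^{a\log x}$. Commensurator rigidity ensures that each $H$-conjugacy class meets $\Gamma$ in at most $p^n$ points, so dividing by this factor leaves a count of the same order.

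For the upper bound (ii), CSP forces every lattice to be a congruence subgroup of a unique maximal arithmetic overlattice $\Lambda$, with $\mathrm{covol}(\Gamma) = [\Lambda:\Gamma]\cdot \mathrm{covol}(\Lambda)\leq x$. My plan: (a) bound the number of maximal arithmetic $\Lambda$ of covolume $\leq V$ by $V^{b_1\log V}$, using Prasad's volume formula together with Schmidt-type bounds on number fields of prescribed degree and discriminant; (b) for each $\Lambda$, bound the congruence subgroups of index $\leq N$ by $N^{b_2\log N/\log\log N}$ (Lubotzky, Goldfeld-Lubotzky-Pyber); (c) sum over $V\leq x$ and $N = x/V$ to obtain $\ru\leq x^{b\log x}$.

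The main obstacle is the first step of (i): producing a specific arithmetic lattice in a given $H$ whose congruence subgroup admits a GS pro-$p$ quotient. Higher-rank arithmetic groups are very rigid---with finite abelianizations and often CSP---so engineering a large $H^1(\Gamma(I),\F_p)$ requires a fine interplay between the $k$-arithmetic of $\G$ and the $p$-class field tower of $k$, most likely via an astute choice of $\G$ (e.g.\ an inner form coming from a division algebra with prescribed ramification) and of $I$ (adapted to the ramification data in Golod-Shafarevich's construction).
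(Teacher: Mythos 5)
The proposal gets the Golod--Shafarevich ingredient, the use of Prasad's volume formula, and the overall two-step structure of the upper bound right, but the mechanism you propose for the lower bound is not the one the paper uses, and in fact it cannot work. Below are the two genuine gaps.

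\medskip

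\textbf{Lower bound (i): the proposed mechanism is impossible.} You want to fix a single arithmetic lattice $\Gamma = \G(\cO)$ in $H$ and find a congruence subgroup $\Gamma(I)$ with a pro-$p$ quotient satisfying the Golod--Shafarevich inequality, so that $\Gamma$ itself has subgroup growth $\ge x^{a\log x}$. You then say the obstacle is that higher-rank arithmetic groups are rigid and that ``engineering a large $H^1(\Gamma(I),\F_p)$'' requires a fine choice of $\G$ and $I$. This obstacle is not a technicality to be engineered around; it is a theorem that the plan fails. Since $H$ has real rank $\ge 2$, the lattices $\Gamma$ are arithmetic by Margulis and satisfy (or, in the cases not yet known, are the very objects conjectured to satisfy) the CSP. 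Under CSP the profinite completion $\widehat{\Gamma(I)}$ is, up to a finite central congruence kernel, a product of parahoric (hence compact $p$-adic analytic) groups. Such a group has polynomial subgroup growth and cannot surject onto an infinite Golod--Shafarevich pro-$p$ group; indeed $s_n(\Gamma)\le n^{c\log n/\log\log n}$ with $c$ depending on $\Gamma$, by \cite{GLP}, \cite{LN}. This was exactly the evidence for the (false) conjecture (\ref{eq_11}): a single fixed lattice can never produce growth of order $x^{\log x}$. The paper's construction is entirely different. One uses Theorem \ref{thm:GS} (and, for complex $H$, Lemma \ref{lemma:Pisot} and Corollary \ref{cor:Pisot}) to obtain a sequence of fields $k_i$ of degree $d_i\to\infty$ with $rd_{k_i}\le c_0$. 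Over each $k_i$ one constructs, via \cite{PR06}, an admissible form $\G_i$ and a principal arithmetic $\Lambda_i$ with $\mu(H/\Lambda_i')\le c_1^{d_i}$ by Prasad's formula. Then inside the finite congruence quotient $\Lambda_i/\Lambda_i(p')$ one finds an elementary abelian $p'$-group of rank $\ge d_i$, yielding $\ge p'^{[d_i^2/4]}$ subgroups of index $\le c_3^{d_i}$. The growth $x^{a\log x}$ therefore appears only when one lets $d_i\to\infty$ and counts ``diagonally'' across lattices; no individual lattice contributes more than $x^{c\log x/\log\log x}$.

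Your appeal to ``commensurator rigidity'' to control conjugacy is also incorrect in this setting: for arithmetic $\Gamma$ in higher-rank $H$, Margulis's theorem gives that $\mathrm{Comm}_H(\Gamma)$ is \emph{dense} in $H$, so $[\mathrm{Comm}_H(\Gamma):\Gamma]$ is infinite; it is precisely the non-arithmetic case where $[\mathrm{Comm}_H(\Gamma):\Gamma]<\infty$. This is why Section \ref{section:covers vs manifolds} of the paper develops the Bruhat--Tits building argument (Proposition \ref{prop_s3}, Corollary \ref{cor_s3}) to bound the number of $\adG(k)$-conjugates of a given subgroup of $\Lambda$; without that step the count of conjugacy classes does not follow from the count of subgroups.

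\medskip

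\textbf{Upper bound (ii): the uniformity gap.} Your steps (a)--(c) are in the spirit of the paper's argument, and step (a) is handled in the paper by citing Theorem \ref{thm:B} (from \cite{B}), which is actually stronger than $V^{b_1\log V}$. The gap is in step (b). The bounds of \cite{GLP}, \cite{LN} of the form $s_N(\Lambda)\le N^{c\log N/\log\log N}$ are asymptotic estimates for a \emph{fixed} $\Lambda$; the implied constants and error terms depend on the defining field $k$, in particular on $d_k$. When $\Lambda$ ranges over maximal lattices of covolume $\le x$, the degree $d_k$ can itself be as large as $O(\log x)$, and there is no uniform form of those bounds strong enough to sum. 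This is exactly what the paper flags as ``a new type of difficulty.'' The paper's resolution is the argument in \S\ref{section:upper bounds}: split $\widehat\Lambda\cong\prod_v\Pa_v$ (modulo the finite congruence kernel) into the ``bad'' finite product over $T$ (controlled by Proposition \ref{cor1:B} and handled by rank estimates, Proposition \ref{prop_84} and Claim \ref{claim:7.5}) and the infinite product $N=\prod_{v\notin T}\Pa_v$. The crucial new input is the Babai--Cameron--P\'alfy theorem (Theorem \ref{thm:BCP}) together with Lemma \ref{lemma_85}, Proposition \ref{prop_87} and Lemma \ref{lemma_88}, which exploit the fact that all composition factors of $N$ lie in a fixed class $B_k$ depending only on $H$, not on the lattice. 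This is what produces a bound $s_x(N)\le x^{O(\log x)}$ that is \emph{uniform} in $\Lambda$. Without an argument of this kind, step (b) cannot be carried out as proposed.
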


A crucial ingredient in the proof of part (i) of the theorem is the existence
of infinite class field towers of totally real fields as established by Golod and
Shafarevich~\cite{GS}. In \S\ref{section:NT}, we elaborate on it using the theory of Pisot
numbers to get also sequences of fields of arbitrarily large degree but with a fixed number
of complex places and a bounded root discriminant. Using these fields
we construct a sequence of arithmetic lattices in $H$ with covolume going to
infinity and with a particularly large number of subgroups of small index.
See below for details. It is interesting to mention that arithmetic
lattices with fields of definition of growing degrees also come out
in a connection with the Lehmer conjecture --- see \cite{B2}.

\medskip

Our argument gives explicit estimates for the constants $a$ and $b$ in Theorem \ref{theorem} but falls short from answering the following:
\begin{problem}\label{problem}
Does $\displaystyle{\lim_{x\to\infty} \frac{\log \mathrm{L}_H(x)}{(\log x)^2}}$ exist? And if so, what is its value?
\end{problem}

\medskip

The proof of the theorem uses methods developed in \cite{B},
\cite{GLP} and \cite{LN} (see also \cite{GLNP}). We would like to point out
that unlike some of the results in \cite{GLP} and \cite{LN}, Theorem \ref{theorem} does not
depend on the GRH.

As all this suggests, our work is actually about counting arithmetic
lattices and their congruence subgroups. It is therefore not surprising that it
eventually boils down to various number theoretic problems. The wealth and
diversity of number theoretic ingredients involved in proving Theorem \ref{theorem}
and results of \cite{BL2} is exciting and may suggest some topics for future
study.

\medskip

Before describing the method of proof, let us put our main result in a
more general perspective.
In \cite{BGLM} the rate of growth of $\mathrm{TFL}_H(x)$ was
determined for $H = \SO(n,1)$, $n\ge4$; it is super-exponential. The lower
bound there is already obtained by considering a suitable fixed lattice in
$\SO(n,1)$ and its finite index subgroups. The upper bound is proved by
geometric methods. These geometric methods were extended in \cite{G1} and
\cite{G2} to more general semisimple groups.
In \cite{BGLS} a very precise super-exponential estimate for $\mathrm{AL}_H(x)$
is given for $H = \SL_2(\R)$. There again the full rate of growth is already
obtained by considering the finite index subgroups of a single lattice. Moreover,
in \cite{GLP} and \cite{LN} (see also \cite{GLNP})
precise asymptotic estimates were given for the growth rate of the number
of congruence subgroups in a fixed lattice $\Lambda$ in $H$. (Some of the
results there are conditional on the GRH). The rate of growth turns out to depend
only on $H$ and not on $\Lambda$. All this
suggested that the rate of growth of the finite index subgroups within one
lattice is the main contribution to $\mathrm{L}_H(x)$. This led to the conjecture
mentioned above. Moreover, in \cite{B} it is shown that the growth rate of the
{\it maximal} arithmetic lattices in $H$ is very small.
% (conjecturally polynomial, and indeed a polynomial bound is given there for the maximal
% non-uniform lattices and a slightly weaker bound of the form $x^{(\log x)^\epsilon}$ is
% proved for all maximal lattices).
This provided more evidence in favor of the conjecture.
Recently A.~Salehi Golsefidy~\cite{Sa} showed that indeed in simple Lie
groups over local fields of positive characteristic, the total growth
of lattices is of the same growth type as the subgroup growth of a
single lattice.

In \cite{BL2} we will show that
the conjecture is essentially true for non-uniform lattices but Theorem \ref{theorem}
here shows, somewhat surprisingly, that it is not true in general.
In fact, we discover here a new phenomenon: the main contribution
to the growth of uniform lattices in $H$ does not come from subgroups of a
single lattice. As it will be explained below, it comes from a ``diagonal
counting'' when we run through different arithmetic groups $\Gamma_i$ defined
over number fields $k_i$ of different degrees $d_i$, and for each $\Gamma_i$ we count
some of its subgroups. The difference between the uniform and non-uniform cases relies
on the fact that all non-uniform lattices in $H$ are defined over number fields of
a bounded degree over $\Q$. On the other hand, uniform lattices may come from
number fields $k_i$ of arbitrarily large degrees, i.e., $d_i\to\infty$.

\medskip

We now briefly sketch the main argument. If $\Gamma$ is an arithmetic
lattice obtained as $\phi(\G(\cO))$ defined above, then there is an explicit
formula \cite{Pr} for its covolume in $H$. The analysis of this formula and also
the growth of the low-index congruence subgroups of $\phi(\G(\cO))$ shows that we
can expect fast subgroup growth if we consider groups over fields of growing
degree with relatively slow growing discriminant $\D_k$. More precisely, we can
combine this two entities together into the so-called root-discriminant $rd_k =
\D_k^{1/{\rm deg}\: k}$ and then look for a sequence of number fields $k_i$ with
degrees growing to infinity but with bounded $rd_{k_i}$. In a seminal work Golod
and Shafarevich \cite{GS} came up with a construction of infinite class field towers.
It is such a tower of number fields $k_i$ that we use to define our arithmetic
subgroups $\Gamma_i$. Galois cohomology methods show the existence of suitable
$k_i$-algebraic groups $\G_i$ which give rise to arithmetic lattices $\Gamma_i
= \G_i(\cO_i)$ in $H$ whose covolume is bounded exponentially in $d_i = {\rm
deg}\: k_i$. We then present $c^{d_i^2}$ congruence subgroups of $\Gamma_i$
whose covolume is still bounded exponentially in $d_i$. Using the theory of
Bruhat-Tits buildings in \S\ref{section:covers vs manifolds} we show that
sufficiently many of such congruence subgroups are not conjugate to each other
in $H$. This will complete the proof of the lower bound of Theorem \ref{theorem},
at least for most real simple Lie groups $H$. The remaining cases require further
consideration: for example, if $H$ is a complex Lie group, the fields $k_i$ should
be replaced by suitable extensions obtained via the help of the theory of
Pisot numbers. These fields do not form a class field tower any more but still have
bounded root discriminant.

The proof of the upper bound presents a new type of difficulty: this time we need
to obtain a uniform upper bound on growth which does not depend on the degrees of
the defining fields. (This is what makes the growth rate $x^{\log x}$ instead of
$x^{\log x/\log\log x}$.) A key ingredient of the proof is an important theorem of
Babai, Cameron and P\'alfy (see Theorem \ref{thm:BCP}) which bounds the size of
permutation groups with restricted Jordan-Holder components. This theorem was
previously used in \cite{Lu} to study the subgroup growth of lattices
defined over global fields of positive characteristic. Bringing related technique
to the number field case presents certain challenges and requires developing
some new ``subgroup growth'' methods.
% We are taking advantage of the fact that this restriction applies uniformly for
% the profinite completions of all the lattices in a given group $H$.
We refer to \S\ref{section:upper bounds} for the details of the argument.

\medskip

The paper is organized as follows. After introducing some notations and
conventions in \S\ref{section:notations}, we supply in \S\ref{section:NT}
the needed number theoretic background:
we quote the Golod-Shafarevich work and use it with the theory of Pisot
numbers to get families of number fields with bounded root discriminant and
a given number of complex embeddings. In \S\ref{section:arithmetic} we analyze
carefully Prasad's formula for the covolume of arithmetic lattices.
In \S\ref{section:covers vs manifolds} we tackle the subtle
difference between counting covers of a given manifold $M$ (which is what we
get by counting finite index subgroups of $\pi_1(M)$) and counting
manifolds covering $M$~--- which is what is relevant in the current paper.
This issue often occurs in geometric considerations, for example, in constructions
of manifolds which are isospectral but not isometric. We develop the required technique
only up to the point needed in the current paper and leave some questions for
further research. The theory of Bruhat-Tits building and their combinatorial
growth plays a major role here. In \S\ref{sec:thm1 lower} we prove the lower
bound of Theorem \ref{theorem} using the results in \S\S \ref{section:NT},
\ref{section:arithmetic} and \ref{section:covers vs manifolds}, while in
\S\ref{section:upper bounds} we prove the upper bound. Finally, in
\S\ref{section:semisimple} we extend the theorem to semisimple Lie groups.

\medskip

\noindent
{\em Acknowledgements.} The authors are grateful to T.~Gelander, G.~Prasad, A.~Rapinchuk, 
A.~Salehi Golsefidy and Ya.~Varshavsky for helpful discussions.

\section{Notations and conventions}\label{section:notations}

% Let  $f: \R_+ \to \R_+$ be a function. We say that $f$ is {\it small} if $\log
% f(x) = o((\log x)^2)$ and $f$ is {\it very small} if $\log f(x) = o((\log
% x)^2/\log\log x)$. All logarithms in this paper will be taken in base $2$.

Let $H$ be a semisimple Lie group without compact factors.
Its subgroup $\Gamma$ is called a {\em lattice}
if $\Gamma$ is discrete in $H$ and its covolume (with respect to some and hence
any Haar measure on $H$) is finite. A lattice is called {\em irreducible} if $\Gamma N$
is dense in $H$, for every noncompact, closed, normal subgroup $N$ of $H$.
A lattice is called {\em uniform} (resp.
{\em non-uniform}) if $H/\Gamma$ is compact (resp. non-compact).

Two groups $\Gamma_1$ and $\Gamma_2$ are called {\it commensurable} if
$\Gamma_1\cap\Gamma_2$ is of finite index in both of them. If $\Gamma$ is a
lattice in $H$, its {\it commensurability subgroup} (or {\it commensurator}) in
$H$ is defined as
\begin{equation*}
{\rm Comm}_H(\Gamma) = \{ g\in H \mid g^{-1}\Gamma g\ {\rm \ \and\ } \Gamma
{\rm\ are\ commensurable}\}.
\end{equation*}

For a group (resp. profinite group) $G$ we define its {\em rank} $\rk(G)$ as the supremum of the minimal number of generators over the finitely generated subgroups (resp. open subgroups) of $\G$. If $G$ is a finite group, its {\em $p$-rank} is defined by $\rk_p(G) = \rk(P)$, where $p$ is a prime and $P$ is a Sylow $p$-subgroup of $G$.

\medskip

Along the lines we shall often come to arithmetic considerations, for which we
now fix some notations. Throughout this paper $k$ will always denote a number
field, $\cO = \cO_k$ is its ring of integers and $\D_k$ is the absolute value
of the discriminant $\Delta_k$. The set of valuations
(places) of $k$, $V = V(k)$, is the union of the set $V_\infty$ of archimedean
and the set $V_f$ of nonarchimedean (finite) places of $k$.
The number of archimedean places of $k$ is denoted by $a = a_k = \# V_\infty$,
and $r_1$, $r_2$ denote the number of real and complex places of $k$, respectively
(so $a = r_1 + r_2$ and $d = d_k = [k:\Q] = r_1 + 2r_2$).
% We shall also use {\em root discriminant} of $k$ which is defined as $rd = rd_k = \D_k^{1/d}$.
Given a nonarchimedean place $v\in V_f$, the
completion of $k$ with respect to $v$ is a nonarchimedean local field $k_v$, its residue
field, which will be denoted by $\F_v$ or $\F_q$, is a finite field of cardinality
$q = q_v$. Finally, $\A = \A(k) = \prod_{v\in V}'k_v$ is the ring of ad\`eles of $k$,
where $\prod'$ denotes a restricted product.

\medskip

All logarithms in this paper will be taken to base $2$. For a real number $x$, $[x]$ denotes the largest integer $\le x$. The number of elements of a finite set $S$ will be denoted by $\# S$, while the order of a finite group $G$ will be denoted by $|G|$.

\medskip

Whenever it is not stated otherwise, the constants $c_1$, $c_2$ and etc. depend only on the Lie group $H$.

\section{Number-theoretic background}\label{section:NT}

\subsection{} Let $\alpha_1$, \dots, $\alpha_d$ be a $\Z$-basis of $\cO_k$ (i.e. an integral basis of $k$), and let $v_1$, \dots, $v_d$ denote the archimedean embeddings of $k$. By definition, the {\em discriminant} $\Delta_k = {\rm det}\bigl[v_j(\alpha_i)\bigr]^2$ and $\D_k$ is its absolute value. The discriminant is related to the volume of the fundamental domain of the integral lattice in $k$. As we will see later on, this relation goes further to the covolumes of arithmetic lattices in semisimple Lie groups. We will also use a notion of {\em root discriminant} of $k$ which is defined by $rd_k = \D_k^{1/d}$, where $d = [k:\Q]$.

Let us recall the following well known results.

\begin{thm}\label{thm:Minkowski} {\rm (Minkowski, see \cite[Theorem 4, p. 119]{La})}
Let  $k$ be a number field of degree $d = r_1 + 2r_2$. There exists a nonzero $\alpha\in\cO_k$ whose norm satisfies
\begin{equation*}
|N(\alpha)| \le \left(\frac4\pi\right)^{r_2}\frac{d!}{d^d}\sqrt{\D_k}.
\end{equation*}
\end{thm}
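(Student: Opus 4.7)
The plan is to apply Minkowski's convex body (lattice point) theorem to the standard geometric embedding of $\cO_k$, and then extract the norm bound via the arithmetic--geometric mean inequality.

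First I would embed $\cO_k$ as a lattice $\Lambda$ in the $d$-dimensional real vector space $V = \R^{r_1}\times\C^{r_2}$ via $\alpha\mapsto(v_1(\alpha),\ldots,v_{r_1}(\alpha),v_{r_1+1}(\alpha),\ldots,v_{r_1+r_2}(\alpha))$, choosing one embedding from each complex conjugate pair. Using the determinant definition of $\Delta_k$ together with the change-of-variables $z=x+iy$ on each complex factor, a short direct computation gives $\mathrm{covol}(\Lambda) = 2^{-r_2}\sqrt{\D_k}$ with respect to the Lebesgue measure on $V\cong\R^d$.

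Next I would fix $t>0$ and consider the symmetric convex body
\begin{equation*}
B_t = \Big\{(x_1,\ldots,x_{r_1},z_1,\ldots,z_{r_2})\in V : \sum_{i=1}^{r_1}|x_i| + 2\sum_{j=1}^{r_2}|z_j| \le t\Big\}.
\end{equation*}
Its volume can be computed by the standard iterated integral (induction on $r_1,r_2$ using polar coordinates on each $\C$-factor), yielding $\mathrm{vol}(B_t) = 2^{r_1}(\pi/2)^{r_2}\, t^d/d!$. Now I would choose $t$ so that $\mathrm{vol}(B_t) = 2^d\cdot 2^{-r_2}\sqrt{\D_k}$, which rearranges to $t^d = d!\,(4/\pi)^{r_2}\sqrt{\D_k}$. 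Minkowski's theorem (applied to the closure of $B_t$, which is compact, so the weak inequality suffices) then produces a nonzero $\alpha\in\cO_k$ whose image lies in $B_t$.

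Finally I would bound $|N(\alpha)|$ using AM--GM on the $d$ positive numbers $|v_1(\alpha)|,\ldots,|v_{r_1}(\alpha)|$ together with each $|v_{r_1+j}(\alpha)|$ counted twice: since $|N(\alpha)| = \prod_{i=1}^{r_1}|v_i(\alpha)|\cdot\prod_{j=1}^{r_2}|v_{r_1+j}(\alpha)|^2$, AM--GM gives
\begin{equation*}
|N(\alpha)|^{1/d} \le \frac{1}{d}\Big(\sum_{i=1}^{r_1}|v_i(\alpha)|+2\sum_{j=1}^{r_2}|v_{r_1+j}(\alpha)|\Big)\le \frac{t}{d},
\end{equation*}
and hence $|N(\alpha)|\le t^d/d^d = (4/\pi)^{r_2}(d!/d^d)\sqrt{\D_k}$, as required. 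There is no serious obstacle here; the only mildly delicate points are getting the factor $2^{-r_2}$ correct in the covolume and the factor $(\pi/2)^{r_2}$ correct in the volume of $B_t$, both of which are routine bookkeeping.
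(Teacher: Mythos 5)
Your proof is correct and is exactly the standard Minkowski convex-body argument that the paper (via its citation to Lang, Theorem 4, p.~119) relies on: embed $\cO_k$ in $\R^{r_1}\times\C^{r_2}$, apply Minkowski's lattice-point theorem to the simplex-like body $\sum|x_i|+2\sum|z_j|\le t$, and finish with AM--GM. The computations of the covolume $2^{-r_2}\sqrt{\D_k}$, the volume $2^{r_1}(\pi/2)^{r_2}t^d/d!$, and the choice $t^d = d!(4/\pi)^{r_2}\sqrt{\D_k}$ all check out.
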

The proof of this theorem follows from the existence of lattice points in convex bodies in $\R^d$ whose volume is big enough relative to a fundamental region for the lattice.

By Stirling's formula, $d! = \sqrt{2\pi d}\left(\frac{d}{e}\right)^d e^{\theta/12d}$ with $0< \theta < 1$ (see \cite[p. 122]{La}). This, together with the fact that $|N(\alpha)| \ge 1$ for $0\neq\alpha\in\cO_k$, allows us to deduce that $\D_k > \left(\frac\pi4\right)^{2r_2}\frac{1}{2\pi d} e^{2d-(1/6d)}$. We shall often use the following form of this estimate:
%Using Stirling's formula for $d!$ we can deduce that $\D_k > \left(\frac4\pi\right)^{2r_2}\frac{1}{2\pi d} e^{2d-(1/6d)}$. We will often use the %following corollary of this estimate:
\begin{cor}\label{cor:Minkowski} {\rm (see \cite[Theorem 5, p. 121]{La})}
There exists an absolute constant $C>0$ such that for any $k\neq\Q$, $d \le C\log\D_k$.
\end{cor}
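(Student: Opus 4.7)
The plan is to extract the corollary directly from the explicit inequality for $\D_k$ displayed just before it, namely
\[
\D_k > \left(\frac{\pi}{4}\right)^{2r_2}\frac{1}{2\pi d}\,e^{2d - 1/(6d)},
\]
which the excerpt has already derived from Theorem \ref{thm:Minkowski}, the bound $|N(\alpha)|\ge 1$, and Stirling's formula. The only remaining task is to turn this lower bound on $\D_k$ (which still involves the unwanted parameter $r_2$) into the advertised clean bound $d \le C\log \D_k$.

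First I would eliminate the dependence on $r_2$. Since $\pi/4 < 1$ and $2r_2 \le d$, we have $\left(\pi/4\right)^{2r_2} \ge \left(\pi/4\right)^{d}$, so the displayed inequality implies
\[
\D_k \;>\; \frac{1}{2\pi d\,e^{1/(6d)}}\left(\frac{\pi e^{2}}{4}\right)^{d}.
\]
The crucial point is that $\pi e^{2}/4 > 1$, so taking logarithms (in any fixed base, base $2$ as the paper stipulates) yields
\[
\log \D_k \;>\; c_1\, d \;-\; c_2 \log d \;-\; c_3,
\]
with $c_1 = \log(\pi e^{2}/4) > 0$ and $c_2,c_3$ absolute positive constants coming from $\log(2\pi d)$ and the $e^{-1/(6d)}$ factor.

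Next I would absorb the lower-order terms. There is an explicit $d_0$ such that $c_2 \log d \le \tfrac{c_1}{2}d$ for every $d \ge d_0$; for those $d$ one gets $d \le \tfrac{2}{c_1}\log \D_k + \tfrac{2c_3}{c_1}$, and since $\D_k \to \infty$ with $d$ one may enlarge the leading constant to swallow the additive term, obtaining $d \le C'\log \D_k$. For the finitely many fields with $d < d_0$ one only needs $\log \D_k$ to be bounded below away from $0$; this is Minkowski's classical result that $\D_k > 1$ whenever $k \neq \Q$. Taking $C$ to be the maximum of $C'$ and the finitely many ratios $d/\log \D_k$ arising from fields with $2\le d < d_0$ finishes the argument.

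I do not anticipate any genuine obstacle: the analytic content is entirely contained in the Minkowski--Stirling inequality already displayed in the excerpt, and what remains is just separating the ``main term'' $c_1 d$ from logarithmic and constant errors and then handling the bounded-degree cases by hand via $\D_k \neq 1$ for $k\neq\Q$.
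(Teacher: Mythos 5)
Your argument is correct and matches the route the paper intends: the corollary is simply a cleanup of the Minkowski--Stirling lower bound $\D_k > (\pi/4)^{2r_2}\frac{1}{2\pi d}e^{2d-1/(6d)}$ displayed immediately before it, which is also how the cited Lang reference proceeds. One minor imprecision worth flagging: there are infinitely many fields of each degree $d<d_0$, not finitely many, but all you actually need is that $\D_k$ is an integer $\ge 2$ for $k\neq\Q$, so $\log\D_k\ge 1$ and hence $d/\log\D_k\le d<d_0$ uniformly over that range.
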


\subsection{} Let us call a sequence of pairwise non-isomorphic fields $(k_i)_{i\in\N}$ {\em asymptotically bounded} if there exists a constant $c_0$ such that for every $i$, the root discriminant $rd_{k_i} \le c_0$. The definition implies that the degree of the fields in an asymptotically bounded sequence goes to infinity (it can be deduced from Minkowski's theorem that the number of fields with bounded $\D_k$ is finite, hence the number of fields with bounded root discriminant and bounded degree is also finite). The existence of asymptotically bounded sequences is not obvious, it follows from the work of Golod and Sha\-fa\-re\-vich on the class field towers.

\begin{thm}\label{thm:GS}{\rm (Golod-Shafarevich \cite{GS})}
There exists an infinite tower of unramified extensions of a totally real number field $k$.
\end{thm}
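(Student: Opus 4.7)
The plan is to follow the original Golod--Shafarevich strategy: fix a prime $p$, form the maximal unramified $p$-extension $K$ of $k$ (the compositum inside a fixed algebraic closure of all finite unramified $p$-extensions), and set $G=\Gal(K/k)$, a pro-$p$ group. Since $K/k$ is the union of its finite subextensions, the class field tower is infinite as soon as $G$ is infinite, so the goal reduces to exhibiting a totally real $k$ for which $G$ cannot be finite.

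The main tool is the Golod--Shafarevich inequality: for any nontrivial finite $p$-group $G$, if $d(G)=\dm_{\F_p} H^1(G,\F_p)$ is the minimal number of generators and $r(G)=\dm_{\F_p} H^2(G,\F_p)$ the minimal number of relations as a pro-$p$ group, then $r(G)>d(G)^2/4$. This is proved by analysing the Hilbert series of the associated graded of $\F_p[[G]]$ with respect to its augmentation filtration and using that, for $G$ finite, this series is a polynomial that must vanish at some positive real number smaller than $1$. The second ingredient is arithmetic: class field theory identifies $d(G)$ with $\dm_{\F_p}(\mathrm{Cl}(k)\otimes\F_p)$, the $p$-rank of the ideal class group of $k$, and a Poitou--Tate Euler characteristic computation (using the Dirichlet unit theorem to handle the archimedean contribution) gives the upper bound $r(G)\le d(G)+r_1+r_2+\delta$ with $\delta\le 1$ measuring whether $\zeta_p\in k$.

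Combining these, if the $p$-rank of $\mathrm{Cl}(k)$ exceeds roughly $2\sqrt{r_1+r_2+1}+2$ then the Golod--Shafarevich inequality is violated for every finite quotient of $G$, forcing $G$ itself to be infinite. It remains to produce a \emph{totally real} $k$ for which this $p$-rank is large enough. I would take $p=2$ and use Gauss's genus theory for real quadratic fields: for $k=\Q(\sqrt{m})$ with $m>0$ squarefree and divisible by $t$ distinct primes ramifying in $k/\Q$, the $2$-rank of the narrow class group is $t-1$, so the $2$-rank of $\mathrm{Cl}(k)$ is at least $t-2$. Since here $r_1+r_2=2$, choosing $t$ sufficiently large (concretely $t\ge 8$ suffices) makes the Golod--Shafarevich bound fail, and one obtains an infinite unramified $2$-tower above this totally real base field.

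The step I expect to be the main obstacle is the cohomological estimate $r(G)\le d(G)+r_1+r_2+\delta$, which is the delicate point of the argument: it requires setting up the pro-$p$ Galois cohomology of $k$ unramified everywhere, invoking global duality, and quantifying the local contribution at each archimedean place against the global units via the Dirichlet regulator. Once that bound and the Golod--Shafarevich inequality are established, the genus-theoretic construction of $k$ is elementary, and passage from ``$G$ infinite'' to ``an infinite tower of unramified extensions of the totally real field $k$'' is immediate.
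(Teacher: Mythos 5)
The paper does not give a proof of this statement; it simply cites Golod--Shafarevich \cite{GS}, and the surrounding text only records the consequence that an infinite unramified tower has constant root discriminant. Your sketch is a faithful and essentially correct reconstruction of the Golod--Shafarevich argument: reduce to showing $G=\Gal(K/k)$ (maximal unramified pro-$p$ extension) is infinite, use the Golod--Shafarevich inequality $r(G)>d(G)^2/4$ for nontrivial finite $p$-groups, combine it with the arithmetic identifications $d(G)=\rk_p\,\mathrm{Cl}(k)$ and the Shafarevich bound on $r(G)-d(G)$, and then use genus theory to manufacture a totally real (real quadratic) $k$ with large $2$-rank of $\mathrm{Cl}(k)$. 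Two small inaccuracies worth noting, neither affecting the conclusion: the Shafarevich bound is in fact $r(G)\le d(G)+r_1+r_2-1+\delta$ (you have an extra $+1$), and the count of ramified primes in $\Q(\sqrt m)$ is the number of prime divisors of the discriminant (which can include $2$ even when $2\nmid m$), not of $m$ itself. With the correct constants, $p=2$, $r_1=2$, $r_2=0$, $\delta=1$ give $r\le d+2$, so $d\ge 6$, hence $t\ge 8$ ramified primes suffice, matching your claim. The step you flag as delicate---the bound $r(G)-d(G)\le r_1+r_2-1+\delta$---is indeed the substantive arithmetic input (it is Shafarevich's contribution, proved via global duality and the unit theorem); for the purposes of this paper that content is outsourced to \cite{GS}.
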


Given an unramified extension $l/k$, we have $\D_l = \D_k^{[l:k]}$ (by \cite[Prop. 8, p. 62 and Prop. 14, p. 66]{La}), and thus $rd_l = rd_k$. Therefore the root discriminant is constant along a tower of unramified extensions, which implies that such towers are asymptotically bounded.  A well known explicit sequence of totally real fields which satisfy
Golod-Shafarevich criterion was constructed by Martinet in \cite{Martinet}, the degrees of the fields are powers of $2$ and $c_0 = rd_{k_i} = 1058.565...$\,. A question about the smallest possible value of $c_0$ is important for various applications and is still open. It is known that a smaller constant can be achieved if we do not require the extensions to be unramified. The best current result in this direction is obtained by Hajir and Maire in \cite{HM}, it provides an asymptotically bounded sequence of totally real fields with $c_0 = 954.3...$\,.

\subsection{}
Our next goal is to construct asymptotically bounded sequences of fields which have a fixed nonzero number of complex places. Note that the results mentioned above do not apply to this case, as in an unramified tower the number of complex places is either zero or grows with the degree (the same applies also to tamely ramified towers in \cite{HM}). In order to deal with this problem we use some results about Pisot numbers.

Assume that the field $k$ has at least one real place. The number $\theta\in k$ is called a {\em Pisot number} (or Pisot-Vijayaraghavan number) if for a real place $v_1: k\to\R$ we have $v_1(\theta) > 1$ and for all other $v_j\in V_\infty$, $|v_j(\theta)| < 1$.

\begin{lemma}\label{lemma:Pisot}
Let $k$ be a totally real number field of degree $d$.
\begin{itemize}
\item[(a)]
There exists a Pisot number $\theta\in k$ such that $\theta$ has degree $d$ and $|N(1-\theta)| < \D_k^\delta$ for some absolute constant $\delta$.
\item[(b)]
Moreover, for any $t$ such that $1\le t \le d$ there exist $t$ different Pisot numbers $\theta_1$, \dots, $\theta_t$ satisfying the conditions of part (a) and such that $\alpha = (1-\theta_1)\ldots(1-\theta_t)$ is negative at $t$ archimedean places of $k$ and positive at the remaining $d-t$ places.
\end{itemize}
\end{lemma}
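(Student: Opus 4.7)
The plan is to apply Minkowski's convex body theorem to the Minkowski embedding $\sigma : k \hookrightarrow \R^d$, $\sigma(x) = (v_1(x), \ldots, v_d(x))$, whose image on $\cO_k$ is a lattice of covolume $\sqrt{\D_k}$. For part (a), I would single out the place $v_1$ and take the symmetric box $B = [-T, T] \times [-\tfrac12, \tfrac12]^{d-1}$ with $T := 2^{d-1}\sqrt{\D_k}$, chosen so that $\mathrm{vol}(B) = 2^d \sqrt{\D_k}$ exactly meets Minkowski's threshold. The theorem then produces a nonzero $\theta \in \cO_k$ with $|v_1(\theta)| \le T$ and $|v_j(\theta)| \le 1/2$ for $j \ge 2$; replacing $\theta$ by $-\theta$ if necessary we may assume $v_1(\theta) \ge 0$, and then $|N(\theta)| \ge 1$ together with the product of the remaining coordinates forces $v_1(\theta) \ge 2^{d-1} > 1$ (once $d \ge 2$). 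Hence $\theta$ is Pisot relative to $v_1$.

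The next step is the degree claim. If $\theta$ lay in a proper subfield $k' \subsetneq k$, every Galois conjugate of $\theta$ would occur among the values $v_1(\theta), \ldots, v_d(\theta)$ with multiplicity $[k:k'] \ge 2$; but $v_1(\theta) \ge 2^{d-1}$ is strictly larger than all other $|v_j(\theta)| \le 1/2$, so it appears exactly once — a contradiction. Thus $\theta$ has degree $d$. To bound the norm, combine $|1 - v_1(\theta)| \le 1 + T$ with $|1 - v_j(\theta)| \le 3/2$ for $j \ge 2$ to obtain $|N(1-\theta)| \le (1+T)(3/2)^{d-1} \le 3^{d-1}\sqrt{\D_k}$; invoking Corollary \ref{cor:Minkowski} (so that $d \le C\log \D_k$) absorbs the $3^{d-1}$ factor into a power of $\D_k$, yielding $|N(1-\theta)| < \D_k^\delta$ for an absolute $\delta$.

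For part (b), the key observation is that if $\theta$ is Pisot at the place $v_i$ then $v_i(1-\theta) < 0$ while $v_j(1-\theta) \in (0,2)$ for $j \ne i$, so $1-\theta$ changes sign only at its distinguished place. I therefore apply the construction of part (a) separately with each of $v_1, \ldots, v_t$ playing the role of the distinguished place, producing Pisot numbers $\theta_1, \ldots, \theta_t$ with $v_i(\theta_i) > 1$ and $|v_j(\theta_i)| < 1$ for $j \ne i$. These are pairwise distinct (since $v_i(\theta_i) > 1$ whereas $|v_i(\theta_j)| < 1$ for $j \ne i$), and in $\alpha = (1-\theta_1)\cdots(1-\theta_t)$ exactly one factor is negative at place $v_j$ when $j \le t$ and none is when $j > t$, giving the desired sign pattern.

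The main subtlety I anticipate is the degree claim in part (a): the \emph{strict} separation between the single large coordinate $v_1(\theta)$ and the remaining small ones is precisely what forbids $\theta$ from lying in a proper subfield. The crucial design choice in part (b) is to realize the $\theta_i$ at \emph{different} archimedean places rather than all at the same one — otherwise every factor would be negative at the same place and the sign pattern would collapse. Beyond these two points, the argument reduces to optimizing the box parameters in Minkowski's theorem and a clean sign count.
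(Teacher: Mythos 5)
Your proof is correct and follows essentially the same route as the paper: the paper cites \cite[Theorem 5.2.2]{BDGPS} for the existence of a degree-$d$ Pisot number via Minkowski's theorem and then quotes the resulting bounds $1<v_1(\theta)\le 2^{d-1}\sqrt{\D_k}$, $|v_j(\theta)|\le 1/2$, whereas you unfold that Minkowski box argument and the degree claim explicitly; part (b) is identical. One small arithmetic slip: $(1+T)(3/2)^{d-1}=3^{d-1}\sqrt{\D_k}+(3/2)^{d-1}$, which is not $\le 3^{d-1}\sqrt{\D_k}$ as you wrote, but the extra summand is harmless once $d\le C\log\D_k$ is invoked, so the conclusion $|N(1-\theta)|<\D_k^{\delta}$ stands.
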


\begin{proof} (a) It is well known that there exist Pisot numbers $\theta\in k$ which generate $k$ over $\Q$ (see e.g. \cite[Theorem 5.2.2, p. 85]{BDGPS}), thus it remains to show that we can choose such $\theta$ that the upper bound for the norm of $1-\theta$ holds. In order to do so we need to recall the proof of the existence of $\theta$: The argument uses Minkowski's theorem and implies that we can choose $\theta$ such that $1 < v_1(\theta) \le 2^{d-1}\sqrt{\D_k}$ and $|v_j(\theta)| \le 1/2$ for $v_j\in V_\infty\smallsetminus\{v_1\}$ (see loc. cit. for the details). Now if $P(x)$ is the minimal polynomial of $\theta$, then $|N(1-\theta)| = |P(1)|$. We have
$$
|P(1)| = |(1-v_1(\theta))\cdot\ldots\cdot(1-v_d(\theta))| \le (2^{d-1}\sqrt{\D_k} + 1)\left(\frac32\right)^{d-1} = 3^{d-1}\sqrt{\D_k} + \left(\frac32\right)^{d-1}.
$$
By Corollary \ref{cor:Minkowski}, the degree $d$ is bounded by $C\log\D_k$, hence we obtain $|N(1-\theta)|\le\D_k^\delta$ where $\delta$ depends only on $C$.
% The coefficients of $P(x)$ are elementary symmetric polynomials in the Galois conjugates of $\theta$. We can bound $|P(1)|$ from above by the sum of the absolute values of its coefficients, and we can bound each coefficient by the sum of the absolute values of the monomials which constitute the corresponding symmetric polynomial. This gives in total $2^d-1$ summands each $<2^{d-1}\sqrt{\D_k}$ (this is a rough estimate, it can be improved for most of the monomials but we do not need it here). As result we obtain
% \begin{equation*}
% |N(1-\theta)| < (2^d-1)2^{d-1}\sqrt{\D_k} < 2^{2C\log \D_k}\sqrt{\D_k} < \D_k^{2C+1/2},
% \end{equation*}
% and the claim follows with $\delta = 2C+1/2$.

(b) Using part (a) we can find $t$ different Pisot numbers $\theta_1,\dots,\theta_t \in k$ such that $v_i(\theta_i) > 1$, $|v_j(\theta_i)| < 1$ for $j\neq i$ and $|N(1-\theta_i)| < \D_k^\delta$ ($1\le i \le t$, $v_j$ are the infinite places of $k$). It follows that $\alpha = (1-\theta_1)\ldots(1-\theta_t)$ satisfies the conditions at the infinite places.
\end{proof}

\begin{cor} \label{cor:Pisot}
Given $t\in\N$, there exists an asymptotically bounded sequence of fields $(l_i)_{i\in\N}$ such that $r_2(l_i) = t$ for all $i$.
\end{cor}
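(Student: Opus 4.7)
The plan is to construct each $l_i$ as a quadratic extension of a field from the totally real Golod--Shafarevich tower, with the extending element chosen via Lemma~\ref{lemma:Pisot}(b) so that the signature acquires exactly $t$ complex places while the root discriminant stays bounded. The case $t = 0$ is immediate from Theorem~\ref{thm:GS}, so I assume $t \ge 1$.

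Let $(k_i)_{i \in \N}$ be an asymptotically bounded tower of totally real fields from Theorem~\ref{thm:GS}, with $rd_{k_i} \le c_0$ and $d_i = [k_i : \Q]$ strictly increasing to infinity. Discarding finitely many terms, I may assume $d_i \ge t$. Applying Lemma~\ref{lemma:Pisot}(b) to each $k_i$ produces an element $\alpha_i \in k_i$ that is negative at exactly $t$ of the real embeddings of $k_i$ and positive at the remaining $d_i - t$, and satisfies $|N_{k_i/\Q}(\alpha_i)| \le \D_{k_i}^{t\delta}$ for the absolute constant $\delta$ of the lemma. Set $l_i = k_i(\sqrt{\alpha_i})$. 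Since $t \ge 1$, $\alpha_i$ changes sign among the real embeddings and is therefore not a square in $k_i$, so $[l_i : k_i] = 2$. Each of the $d_i - t$ real embeddings where $\alpha_i > 0$ lifts to two real embeddings of $l_i$, while each of the $t$ real embeddings where $\alpha_i < 0$ lifts to a single complex place of $l_i$. Hence $r_1(l_i) = 2(d_i - t)$ and $r_2(l_i) = t$, as required.

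For the discriminant bound, recall that the relative different $\mathfrak{d}_{l_i/k_i}$ divides $(2\sqrt{\alpha_i})$ (being bounded by the derivative of the minimal polynomial $x^2 - \alpha_i$), so the relative discriminant $\mathfrak{D}_{l_i/k_i} = N_{l_i/k_i}(\mathfrak{d}_{l_i/k_i})$ divides $(4\alpha_i)$. The standard tower formula for the discriminant then gives
\[
\D_{l_i} \;=\; \D_{k_i}^{\,2} \cdot N_{k_i/\Q}(\mathfrak{D}_{l_i/k_i}) \;\le\; \D_{k_i}^{\,2} \cdot 4^{d_i} \cdot |N_{k_i/\Q}(\alpha_i)| \;\le\; 4^{d_i} \, \D_{k_i}^{\,2 + t\delta}.
\]
Taking the $(2 d_i)$-th root,
\[
rd_{l_i} \;\le\; 2 \cdot rd_{k_i}^{(2 + t\delta)/2} \;\le\; 2 \, c_0^{(2 + t\delta)/2},
\]
which is a constant independent of $i$. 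Since $[l_i : \Q] = 2 d_i \to \infty$, the $l_i$ have strictly increasing degrees and are therefore pairwise non-isomorphic, giving the required asymptotically bounded sequence with $r_2(l_i) = t$ for every $i$.

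The main obstacle this plan must overcome is that the exponent in the discriminant bound has to be controlled \emph{independently of $d_i$}; were it to grow with $d_i$, the root discriminant $rd_{l_i}$ would blow up. This is precisely what Lemma~\ref{lemma:Pisot}(b) achieves: the key point is that the norm bound $|N(1-\theta_j)| \le \D_{k_i}^{\delta}$ has an absolute exponent $\delta$, so the product $\alpha_i$ of $t$ such factors keeps an exponent depending only on the fixed quantity $t$, and the factor $4^{d_i}$ contributes only a harmless constant after taking the $(2d_i)$-th root.
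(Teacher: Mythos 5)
Your proposal is correct and follows the same route as the paper: start from the totally real Golod--Shafarevich tower, use Lemma~\ref{lemma:Pisot}(b) to produce $\alpha_i$ with the right signs and norm bound, form $l_i = k_i(\sqrt{\alpha_i})$, and bound $rd_{l_i}$ via the discriminant tower formula, arriving at the same constant $2c_0^{(2+t\delta)/2}$. You supply a few details the paper leaves implicit (the $t=0$ base case, that $\alpha_i$ is a non-square because it changes sign, and the explicit different computation replacing the reference to Lang), which are all sound.
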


\begin{proof} We start with an infinite unramified tower $(k_i)$ of totally real fields with $rd_{k_i}\le c_0$ provided by Theorem \ref{thm:GS}. As the degrees $d_i\to\infty$, we can assume that $d_i\ge t$ for all $i$. Let $k = k_i$ be one of the fields.
Let $\theta_1,\dots,\theta_t \in k$ be Pisot numbers chosen as in part (b) of the lemma and let
$\alpha = (1-\theta_1)\ldots(1-\theta_t)$.
Then the field $l = k[\sqrt{\alpha}]$ has precisely $t$ complex places and we have the following bound for its discriminant:
\begin{align*}
& \D_l \le \D_k^2 2^{2d} |N(\alpha)| \le \D_k^2 2^{2d} \D_k^{t\delta}; \\
& rd_l \le 2\D_k^{\frac{2+t\delta}{2d}} \le 2c_0^{\frac{2+t\delta}{2}}
\end{align*}
(here the first inequality follows from \cite[Prop. 8, p. 62]{La}, \cite[Prop. 14, p. 66]{La} and some elementary properties of the norm).
Repeating this procedure for all $k_i$ we obtain an asymptotically bounded sequence of fields with the required properties.
\end{proof}

From Minkowski's theorem it follows that there exists a positive lower bound for the constants $c_0$ of asymptotically bounded sequences of fields.
Although we do not require it in this paper, it would be interesting to know more about this bound and its dependence on the number of complex places of the fields in the sequences.
% This leads to the following problem.
% \begin{prob}
% Give effective lower bounds for the constants $c_0$ of asymptotically bounded sequences of fields with given number
% of complex places. Construct asymptotically bounded sequences which approach (or even possibly attain) these bounds.
% \end{prob}

\section{Arithmetic subgroups and their covolumes}\label{section:arithmetic}

\subsection{}\label{sec:ar}
Let $H$ be a semisimple connected linear Lie group without compact factors. It
is known that if $H$ contains irreducible lattices then all of its almost
simple factors are of the same type. Such groups $H$ are called {\it isotypic} or {\it
typewise homogeneous} (see~\cite[Chapt.~9.4]{Ma}). So from now on we shall assume
that $H$ is isotypic. Moreover, without loss of generality we can further assume
that the center of $H$ is trivial. This implies that $H$ is isomorphic to ${\rm
Ad\:} H$, where ${\rm Ad}$ denotes as usual the adjoint representation. The
group ${\rm Ad\:} H$ is the connected component of identity of the $\R$-points
of a semisimple algebraic $\R$-group. There exist, therefore,
absolutely simple $\R$-groups $\G_i$, all of the same type, such that $H = (\prod_{i=
1}^{a}\G_i(\R) \times \G_{a+1}(\C)^{b})^o.$ A classical theorem of Borel
\cite{Bo1} (see also \cite{BH}) asserts that such $H$ does contain irreducible
lattices.

Let now $\G$ be an algebraic group defined over a number field $k$ which admits
an epimorphism $\phi:\G(k\otimes_\Q\R)^o \to H$ whose kernel is compact. In
this case, $\phi(\G(\cO))$ is an irreducible lattice in $H$.
Such lattices and the subgroups of $H$ which are
commensurable with them are called {\it arithmetic}. It can be shown that to
define all arithmetic subgroups of $H$ it is sufficient to consider only
simply connected, absolutely almost simple $k$-groups $\G$ which have the same
(absolute) type as the almost simple factors of $H$ and are defined over the
fields with at most $b$ complex and at least $a$ real places. In this case, as $\G$ 
is a simply connected $k$-group, $\G(k\otimes_\Q\R)$ is connected. We shall call 
such groups $\G$ and corresponding fields $k$ {\it admissible}.

The local-global principle provides a standard way to construct arithmetic
subgroups which will be particularly useful for us. Let $\Pa = (\Pa_v)_{v\in V_f}$
be a collection of parahoric subgroups $\Pa_v\subset\G(k_v)$
of a simply connected $k$-group $\G$. The family $\Pa$ is called {\it coherent} if
$\prod_{v\in V_\infty}\G(k_v)\cdot\prod_{v\in V_f} \Pa_v$ is an open subgroup of
the ad\`ele group $\G(\A_k)$.
Now let
\begin{equation*}
\Lambda = \Lambda(\Pa) = \G(k)\cap\prod_{v\in V_f} \Pa_v,
\end{equation*}
where $\Pa$ is a coherent collection. Following \cite{Pr}, we shall call $\Lambda$ the
{\em principal arithmetic subgroup} associated to $\Pa$. We shall also call $\Lambda' = \phi(\Lambda)$
a principal arithmetic subgroup of $H$.

\subsection{}\label{sec:ar_vol}
The Lie group $H$ carries a Haar measure $\mu$ which is uniquely defined up to a constant
factor. The choice of a particular normalization of $\mu$ is not essential for
our considerations. From now on we shall fix a Haar measure on
$\G(k\otimes_\Q\R)$ for some admissible $\G/k$ following \cite[Secs.~1.4,
3.6]{Pr}, this also defines a normalized Haar measure on $H$ which does
not depend on the choice of $\G$. We can compute the covolumes of principal
arithmetic subgroups with respect to $\mu$ using Prasad's volume formula. By
\cite[Theorem~3.7]{Pr}, we have:
\begin{equation*}
\mu(H/\Lambda')  = \D_k^{\dm(\G)/2}(\D_l/\D_k^{[l:k]})^{\frac12s}
\left(\prod_{i=1}^{r}\frac{m_i!}{(2\pi)^{m_i+1}}\right)^{[k:\Q]}
\tau_k(\G)\:\E(\Pa),
\end{equation*}
where
\begin{itemize}
\item[(i)] $\dm(\G)$, $r$ and $m_i$ denote the dimension, rank and Lie exponents of $\G$;
\smallskip\item[(ii)] $l$ is a Galois extension of $k$ defined as in \cite[0.2]{Pr}
(if $\G$ is not a $k$-form of type $^6\Dn_4$, then $l$ is the split field of
the quasi-split inner $k$-form of $\G$, and if $\G$ is of type $^6\Dn_4$, then
$l$ is a fixed cubic extension of $k$ contained in the corresponding split
field; in all the cases $[l:k]\le 3$);
\smallskip\item[(iii)] $s = s(\G)$  is an integer defined in  \cite[0.4]{Pr},
in particular, $s=0$  if  $\G$  is an inner form of a split group and $s\ge 5$
if $\G$ is an outer form;
\smallskip\item[(iv)] $\tau_k(\G)$ is the Tamagawa number of $\G$ over $k$ (since $\G$ is simply connected
and $k$ is a number field, $\tau_k(\G) = 1$); and
\smallskip\item[(v)]
$\E(\Pa) = \prod_{v\in V_f} e_v$ is an Euler product of the local factors $e_v =
e(\Pa_v)$.
\end{itemize}

The local factors $e_v$ can be effectively computed using the Bruhat-Tits
theory. In order to justify this claim we will need a few more definitions.

\subsection{} \label{sec:BT}
Let $k_v$ be a nonarchimedean local field of characteristic zero (a finite extension of
the $p$-adic field $\Q_p$), and let $\G$ be an absolutely almost simple, simply connected
$k_v$-group. The Bruhat-Tits theory~\cite{BT} associates to $\G/k_v$ a simplicial complex
$\B = \B(\G/k_v)$ on which $\G(k_v)$ acts by simplicial automorphisms. The complex $\B$
is called the {\it affine building} of $\G/k_v$. A {\it parahoric subgroup} $\Pa$
of $\G(k_v)$ is defined as a stabilizer of a simplex of $\B$. Every parahoric
subgroup is compact and open in $\G(k_v)$ in the $p$-adic topology. Maximal
parahoric subgroups are the maximal compact subgroups of $\G(k_v)$; they are
characterized by the property of being the stabilizers of the vertices of $\B$. A
maximal parahoric subgroup is called {\it special} if it fixes a {\it special
vertex} of $\B$. A vertex $x\in\B$ is special if the affine Weyl group $W$ of
$\G(k_v)$ is a semidirect product of the translation subgroup by the isotropy
group $W_x$ of $x$ in $W$. In this case, $W_x$ is canonically isomorphic to the
(finite) Weyl group of the $k_v$-root system of $\G$. If $\G$ is quasi-split over $k_v$
and splits over an unramified extension of $k_v$, then $\G(k_v)$ contains also
{\it hyperspecial} parahoric subgroups (see \cite[1.10]{Tits}); these subgroups are
parahoric subgroups of $\G(k_v)$ of the maximal volume (\cite[3.8.2]{Tits}).

Every special (or hyperspecial) parahoric subgroup $\Pa_v$
has a normal pro-$p$ subgroup the quotient by which is a quasi-simple group (i.e.
it is simple modulo the center), and hence it also has a maximal prosolvable
normal subgroup with a finite simple (non-abelian) quotient. Such a maximal
normal subgroup is unique.

Following \cite{Pr}, we associate to a parahoric subgroup $\Pa_v\subset\G(k_v)$ two
reductive groups $\cMv$ and $\Mv$ over the residue field $\F_v$ of $k_v$:
Using the Bruhat-Tits theory one can define a smooth affine
group scheme $\G_v$ over the ring of integers $\cO_v$ of $k_v$,
whose generic fiber ($= \G_v\times_{\cO_v} k_v$)
is isomorphic to $\G(k_v)$ and whose group of integral points is
isomorphic to $\Pa_v$. Then $\Mv$ denotes a maximal connected reductive
$\F_v$-subgroup of $\G_v \times_{\cO_v} \F_v$. The group $\cMv$ is
defined in a similar way for the quasi-split inner form $\qsG$ of $\G(k_v)$ and a
specially chosen parahoric subgroup of $\qsG$. We refer to \cite[2.2]{Pr} for
the details and finally write down the expression for the local factor $e_v$
in the volume formula:
\begin{equation*}
e_v = e(\Pa_v) = \frac{\#\F_v^{\:(\dm(\Mv) + \dm(\cMv))/2}}
{\#\Mv(\F_v)}.
\end{equation*}

Assume now that $\G$ is quasi-split over $k_v$ and $\Pa_v$ is a special parahoric
subgroup, which is, moreover, assumed to be hyperspecial
if $\G$ splits over an unramified extension of $k_v$. In this case, $\cMv$
is isomorphic to $\Mv$ and $\Mv(\F_v)$ is a finite simple group of the
same type as $\G$. So the computation of $e(\Pa_v)$ becomes easy (see
\cite[Rem.~3.11]{Pr} and \S\ref{sec:thm1 lower} below). We recall that these conditions on
$\G$ and $\Pa_v$ are indeed satisfied for almost all nonarchimedean places
of $k$: $\G$ is quasi-split over almost every $k_v$ and
$\prod_{v\in V_\infty}\G(k_v)\cdot\prod_{v\in V_f} \Pa_v$ being open in
$\G(\A_k)$ implies that $\Pa_v$ is hyperspecial for almost every $v$. Thus
generically the computation of the local factors in the volume formula is
pretty straightforward.

\subsection{} \label{sec:max_ar}
Let $\Gamma$ be a maximal arithmetic lattice in $H$. It is known that $\Gamma$ can be obtained as a normalizer in $H$ of the image $\Lambda'$ of some principal arithmetic subgroup $\Lambda$ of $\G(k)$ (see \cite[Prop.~1.4(iv)]{BP}). Moreover, such $\Lambda$'s are principal arithmetic subgroups of {\em maximal type} in a sense of Rohlfs (see \cite{Rohlfs} and also \cite{CR} for precise definitions). In order to prove the main theorem we will need certain control over the structure of $\Lambda$ and the index $[\Gamma:\Lambda']$ in terms of the covolume of $\Gamma$. For this purpose we recall two results which follow from \cite{B}.

\medskip

Let $\Gamma = N_H(\Lambda')$ ($\Lambda' = \phi(\Lambda)$, $\Lambda = \G(k)\cap\prod_{v\in V_f} \Pa_v$) be a maximal arithmetic lattice of covolume less than $x$, with $x$ large enough.

\begin{prop} \label{cor1:B}
Let $T$ be the smallest set of nonarchimedean places of $k$ such that for every $v\in V_f\smallsetminus T$, $\G$ is quasi-split over $k_v$, splits over an unramified extension of $k_v$, and $\Pa_v$ is hyperspecial. Then there exists a constant $C_1 = C_1(H)$ such that $\prod_{v\in T} q_v \le x^{C_1}$.
\end{prop}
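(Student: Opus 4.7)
The plan is to extract a lower bound of the form $\mu(H/\Lambda')\ge c(H)\prod_{v\in T}q_v^{\epsilon}$ from Prasad's volume formula, and to combine it with the trivial identity $\mu(H/\Lambda')=[\Gamma:\Lambda']\cdot\mu(H/\Gamma)\le[\Gamma:\Lambda']\cdot x$ together with a polynomial-in-$x$ bound $[\Gamma:\Lambda']\le x^{c_2(H)}$ that is supplied by the main results of \cite{B}. Comparing the two inequalities then yields $\prod_{v\in T}q_v\le x^{C_1}$ with $C_1=(c_2+1)/\epsilon$.

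To obtain the lower bound on $\mu(H/\Lambda')$, I would first dispose of the non-Euler-product contributions in the formula of \S\ref{sec:ar_vol}. Since $\D_l\ge\D_k^{[l:k]}$ by the standard relation for discriminants in extensions, and since $s=0$ whenever $l=k$, the discriminant prefactor is at least $\D_k^{\dm(\G)/2}$. By the Odlyzko-type bounds (sharper than Corollary \ref{cor:Minkowski} but still unconditional), this grows at least as $c_0^{d_k\cdot\dm(\G)/2}$ for some $c_0>1$, which comfortably absorbs the archimedean factor $\bigl(\prod_{i}m_i!/(2\pi)^{m_i+1}\bigr)^{[k:\Q]}$. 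Thus their combined contribution is bounded below by a positive constant depending only on $H$.

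Next, I would estimate the finite Euler product $\E(\Pa)=\prod_{v\in V_f}e_v$. At a good place $v\notin T$, the group $\cMv$ coincides with $\Mv$ and is a simply connected quasi-split simple $\F_v$-group twisted by an unramified character of $\Gal(l_v/k_v)$, so the standard expression $e_v=\prod_i(1-\chi_v(F_v)\,q_v^{-(m_i+1)})^{-1}$ applies; the product $\prod_{v\notin T}e_v$ then assembles into a bounded nonzero multiple of values of $\zeta_k$ and Artin $L$-functions at integer arguments $m_i+1\ge 2$, and so is bounded below by a positive constant. At a bad place $v\in T$, Bruhat--Tits theory provides the uniform lower bound
$$e_v=\frac{\#\F_v^{\,(\dm(\Mv)+\dm(\cMv))/2}}{\#\Mv(\F_v)}\ \ge\ c_5\,q_v^{\epsilon},$$
with $c_5,\epsilon>0$ depending only on $H$: in each of the three failure scenarios defining $T$ ($\G$ not quasi-split over $k_v$; $\G$ quasi-split but splitting only over a ramified extension; $\Pa_v$ non-hyperspecial) a finite case analysis via the local Tits indices shows that either $\dm(\cMv)$ strictly exceeds $\dm(\Mv)$, or the ratio $\#\Mv(\F_v)/\#\F_v^{\dm(\Mv)}$ drops by at least a factor $q_v^{\epsilon}$ relative to the hyperspecial case.

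Multiplying the local estimates over $v\in T$ and combining with the prefactor gives $\mu(H/\Lambda')\ge c(H)\prod_{v\in T}q_v^{\epsilon}$; comparing with $\mu(H/\Lambda')\le[\Gamma:\Lambda']\cdot x\le x^{c_2+1}$ (from the index bound of \cite{B}) concludes the proof. The main technical obstacle is the uniform local estimate $e_v\ge c_5 q_v^{\epsilon}$ at bad places, which requires a complete case analysis using the local Tits classification; this, together with the polynomial index bound $[\Gamma:\Lambda']\le x^{c_2}$, constitutes the content of \cite{B} that is being invoked.
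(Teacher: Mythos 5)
Your overall plan is the right one and is essentially what underlies the paper's proof (which itself simply cites the relevant estimates from \cite{B}, Secs.\ 4.1, 4.3, 4.4). The paper decomposes $T$ into three sets --- places where $\G$ is not quasi-split, places where $\G$ is quasi-split but only splits over a ramified extension (call this set $R$), and places where $\G$ splits over an unramified extension but $\Pa_v$ is not hyperspecial --- and quotes from \cite{B} a lower bound $\mu(H/\Gamma)\ge c\,\prod q_v^{\delta}$ over each set separately. Also note that the paper bounds $\mu(H/\Gamma)$ directly (which is $\le x$ by hypothesis), rather than bounding $\mu(H/\Lambda')$ and then invoking the index bound as you do; both routes are legitimate, but yours involves an extra step that the black-box citation makes unnecessary.

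However, there is a genuine gap in your argument: your claimed uniform local lower bound $e_v\ge c_5\,q_v^{\epsilon}$ does \emph{not} hold for places $v\in R$. At such a place $\G$ is quasi-split over $k_v$ and (assuming $\Pa_v$ is special, which is the generic situation) $\cMv$ is isomorphic to $\Mv$, so the local factor $e_v = q_v^{\dm(\Mv)}/\#\Mv(\F_v)$ is close to $1$ and does \emph{not} grow with $q_v$. The contribution of such places is recorded not in the Euler product but in the prefactor $(\D_l/\D_k^{[l:k]})^{s/2}=\D_{l/k}^{s/2}$: ramification of $l/k$ at $v$ makes $\D_{l/k}$ divisible by $q_v$. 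You dismiss this prefactor early on by observing $\D_l\ge\D_k^{[l:k]}$ and dropping it, which is exactly where the $R$-contribution is lost. This is why the paper's proof has the \emph{three} displayed inequalities, with the middle one reading $\mu(H/\Gamma)\ge c_2(\D_l/\D_k^{[l:k]})^{\delta_2}\ge c_2\prod_{v\in R}q_v^{\delta_2}$ --- the estimate at $R$ is a global discriminant estimate, not a local Euler factor estimate. A secondary, smaller issue: the product $\prod_{v\notin T}e_v$ over good places is not necessarily bounded below by a constant depending only on $H$ --- when there are $+$ signs in the local factors it can decay exponentially in $d_k$ --- so the claim that it is uniformly bounded away from $0$ requires the same balancing against $\D_k^{\dm(\G)/2}$ that you use for the archimedean factor; this is handled in \cite[Sec.\ 4.1]{B}, but as written your sketch treats it as automatic.
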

\begin{proof} This result follows from \cite[Secs.~4.1, 4.3, 4.4]{B} but is not stated there explicitly. We recall the main steps of the proof.

Let $T_1$ be the subset of the nonarchimedean places of $k$ such that $\G$ is not quasi-split over $k_v$ for $v\in T_1$, let $R\subset V_f$ be the set of places for which $\G$ is quasi-split but is not split over an unramified extension of $k_v$, and let $T_2\subset V_f\smallsetminus(T_1\cup R)$ be the set of places for which $\Pa_v$ is not hyperspecial. Then $T = T_1 \cup R \cup T_2$ is a finite subset of $V_f$. Moreover,
\begin{align*}
\mu(H/\Gamma) &\ge c_1\prod_{v\in T_1} q_v^{\delta_1}, \text{ by \cite[4.3]{B};}\\
\mu(H/\Gamma) &\ge c_2\left(\D_l/\D_k^{[l:k]}\right)^{\delta_2} \ge c_2\prod_{v\in R} q_v^{\delta_2}, \text{ by \cite[4.1]{B},
see also \cite[4.3]{B};}\\ % and \cite[App.]{Pr};}\\
\mu(H/\Gamma) &\ge c_3\prod_{v\in T_2} q_v^{\delta_3}, \text{ by \cite[4.4]{B},}
\end{align*}
where $c_1$, $c_2$, $c_3 >0$ are some absolute constants and $\delta_1$, $\delta_2$, $\delta_3 > 0$ are constants which depend only on the Lie type of $H$.

Altogether, these inequalities imply that there exist $c>0$ and $\delta = \delta(H) > 0$ such that $x \ge \mu(H/\Gamma) \ge c\prod_{v\in T} q_v^{\delta}$, and the proposition follows.
\end{proof}

\begin{prop} \label{cor:B} \cite[Cor.~6.1]{B}
There exists a constant $C_2 = C_2(H)$ such that for $Q = \Gamma/\Lambda'$ we have $|Q| \le x^{C_2}$.
\end{prop}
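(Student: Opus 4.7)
The group $Q = \Gamma/\Lambda'$ is finite because both $\Gamma$ and $\Lambda'$ are lattices in $H$. My plan is to follow the Rohlfs--Borel--Prasad cohomological framework: first embed $Q$ into a suitable Galois cohomology group subject to local conditions, and then bound that group using class field theory together with the two a priori inputs already on the table, namely the discriminant bound $\D_k \le x^{O(1)}$ implicit in Prasad's volume formula and the bound $\prod_{v\in T} q_v \le x^{C_1}$ from Proposition \ref{cor1:B}.

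The first step is to invoke the standard structural result of Borel--Prasad (\cite[Prop.~2.9]{BP}, refining Rohlfs \cite{Rohlfs}) which realizes $Q$ as a subgroup of a distinguished subset $H^1(k,C)_\xi \subseteq H^1(k,C)$, where $C$ denotes the finite center of the simply connected $k$-group $\G$. The subset is cut out by requiring that the localization at every archimedean place and at every finite place $v \notin T$ is trivial: at such $v$, $\G$ is quasi-split, splits over an unramified extension of $k_v$, and $\Pa_v$ is hyperspecial, so the associated local cohomology contribution automatically vanishes. Hence $|Q| \le |H^1(k,C)_\xi|$.

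The second step is to bound $|H^1(k,C)_\xi|$ via a Poitou--Tate-type exact sequence combined with class field theory. This decomposes the size into three kinds of factors: a global piece controlled by the class number $h_l$ of the small Galois extension $l/k$ that splits $C$ (with $[l:k]$ bounded in terms of the type of $H$); an archimedean piece of size at most $2^{r_1}$; and a product $\prod_{v\in T}|H^1(k_v,C)|$ of local factors, each bounded by $q_v^{c(H)}$ for a constant $c(H)$. Each factor is then bounded by a power of $x$: Prasad's formula together with positivity of the remaining terms gives $\D_k \le x^{O(1)}$; the classical Minkowski--Brauer--Siegel estimate $h_k \ll \D_k^{1/2}(\log \D_k)^{d_k-1}$ combined with a Friedman-type lower bound on the regulator yields $h_k, h_l \le x^{O(1)}$; Corollary \ref{cor:Minkowski} gives $r_1 \le d_k \le O(\log \D_k) \le O(\log x)$, so $2^{r_1}\le x^{O(1)}$; and Proposition \ref{cor1:B} bounds the $T$-local product by $x^{O(1)}$. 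Multiplying these estimates yields $|Q| \le x^{C_2}$ for some $C_2 = C_2(H)$.

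The main obstacle is the cohomological identification in the first step: producing $Q \hookrightarrow H^1(k,C)_\xi$ with precisely the right local constraints requires that $\Lambda$ be a principal arithmetic subgroup of Rohlfs' \emph{maximal type} together with a delicate ad\`elic analysis of the normalizer $N_H(\Lambda')$. I would take this machinery as a black box from \cite[\S2]{BP} and \cite{Rohlfs}; once it is in place, the remaining estimates are a fairly routine combination of Minkowski--Brauer--Siegel with the inputs already proved earlier in the excerpt.
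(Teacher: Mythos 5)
The paper does not prove this statement: it is imported verbatim from the cited reference \cite[Cor.~6.1]{B}, and Proposition~\ref{cor:B} is stated as a citation rather than an internal lemma. What you have written is a reconstruction of the argument in that reference, and it does indeed proceed via the Borel--Prasad/Rohlfs cohomological realization $Q\hookrightarrow H^1(k,C)_\xi$ followed by a term-by-term bound, so the overall architecture of your sketch is the right one.

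However, two of your estimates need more care than the sketch suggests, and the class number step as written would actually fail. First, the crude bound $h_k\ll\D_k^{1/2}(\log\D_k)^{d_k-1}$ together with $\D_k\le x^{O(1)}$ does \emph{not} give $h_k\le x^{O(1)}$: since $d_k$ may grow like $\log x$ here, the factor $(\log\D_k)^{d_k-1}$ is of size $x^{O(\log\log x)}$, which is super-polynomial. One must use a sharper residue estimate of the form $\mathrm{Res}_{s=1}\zeta_k(s)\le\bigl(\frac{e\log\D_k}{2(d_k-1)}\bigr)^{d_k-1}$; a short calculus computation shows the right-hand side is maximized when $d_k-1=\tfrac12\log\D_k$ and is then at most $\D_k^{1/2}$. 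Combined with the class number formula and a Zimmert/Friedman lower bound on $R_k/w_k$, this yields $h_k=O(\D_k)=x^{O(1)}$, uniformly in $d_k$. Second, the assertion that ``Prasad's formula together with positivity of the remaining terms gives $\D_k\le x^{O(1)}$'' is not a positivity argument: the archimedean factor $\prod_i m_i!/(2\pi)^{m_i+1}$, raised to the power $d_k$, can be exponentially \emph{small} in $d_k$, and one needs Odlyzko/Minkowski-type lower bounds on the root discriminant to show that $\D_k^{\dm(\G)/2}$ dominates it. (A more cosmetic point: the archimedean cohomological contribution should be bounded by $|C|^{r_1}$, not $2^{r_1}$; since $|C|$ is bounded in terms of the type of $H$ this is still $x^{O(1)}$.) With these corrections the argument goes through and matches the proof in \cite{B}.
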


\subsection{} \label{sec:level vs index}
For future use let us give a variant of the "level versus index" lemma where the level is controlled by the covolume of the lattice. To put it in a perspective, recall the classical lemma asserting that in $\Delta = \SL_2(\Z)$, every congruence subgroup of index $n$ contains $\Delta(m) = \Ker(\SL_2(\Z)\to\SL_2(\Z/m\Z))$ for some $m \le n$, i.e. the level $m$ is at most the index $n$. This was generalized in \cite{Lu} to the congruence subgroups of an arbitrary arithmetic group $\Delta$ by paying a price for $m$; i.e. it was shown that $m\le Cn$ for some constant $C$ which depends on the arithmetic group $\Delta$. Here we want to bound $C$ in terms of the covolume.

Let us first introduce some notations. As before, let $\Lambda = \G(k)\cap\prod_{v\in V_f}\Pa_v$
where $k$ is a number field with the ring of integers $\cO$, $\G$ is a $k$-form of $H$ and $\Pa_v$ is a parahoric subgroup of $\G(k_v)$, and  let $\G_v$ be an $\cO_v$-scheme  with the
generic fiber isomorphic to $\G(k_v)$ such that $\G_v(\cO_v) = \Pa_v$. This
induces a congruence subgroup structure on $\Pa_v$ defined as follows:
\begin{equation*}
\Pa_v(r) = \Ker (\G_v(\cO_v) \to \G_v(\cO_v/\pi_v^r\cO_v)),
\end{equation*}
where $\pi_v$ is a uniformizer of $\cO_v$. These congruence subgroups induce a
congruence structure on $\Lambda$, $\Lambda(\pi_v^r) = \Pa_v(r)\cap\Lambda$. More
generally, for every ideal $I$ of $\cO$ look at its closure $\bar I$ in
$\hat\cO = \prod_v \cO_v$. Then $\bar I$ is equal to $\prod_{i=1}^{l}
\pi_{v_i}^{e_i}\hat\cO$ for some $Y = \{v_1, \ldots, v_l\}\subset V_f$ and
$e_1,\ldots ,e_l \in \N$. We then define the $I$-congruence subgroup of $\Lambda$,
\begin{equation*}
\Lambda(I) = \Lambda \cap (\prod_{i=1}^{l} \Pa_{v_i}(e_i)\cdot\prod_{v\not\in Y}\Pa_v).
\end{equation*}
In particular, for every $m\in\N$, the $m$-congruence subgroup $\Lambda(m) = \Lambda(m\cO)$ is
defined. Any subgroup of $\Lambda$ which contains $\Lambda(I)$ for some non-zero ideal $I$ is
called a {\em congruence subgroup}.

Let now $\Lambda$ be a principal arithmetic subgroup of a maximal type in $\G(k)$ and let $\Lambda'$ be its image in $H$. Assume also that
$\mu(H/\Lambda') \le x$, where $x\gg 0$.
\begin{lemma} \label{level vs index}
If $\Lambda_1$ is a congruence subgroup of $\Lambda$ of index $n$, then $\Lambda_1 \supseteq \Lambda(m\cO)$ where $m\in\N$ with $m\le x^C n$ and $C$ is a constant which depends only on $H$.
\end{lemma}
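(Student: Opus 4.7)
The plan is to adapt the argument of Lubotzky in \cite{Lu}, which establishes this sort of statement for a fixed arithmetic group with an implicit constant depending on that group, and to track how that constant depends on $\Lambda$ through polynomial bounds in the covolume $x$. Propositions \ref{cor1:B} and \ref{cor:B} already encode most of the relevant arithmetic data of $\Lambda$ in a form polynomial in $x$.

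First I would let $I\subseteq\cO$ be the level of $\Lambda_1$ in $\Lambda$, i.e.\ the smallest ideal (with respect to divisibility) such that $\Lambda(I)\subseteq\Lambda_1$, and factor it as $I=\prod_{v\in S}\pi_v^{e_v}$. Reducing to the norm $N(I)=\prod_v q_v^{e_v}$ suffices, since the smallest positive integer $m$ with $m\cO\subseteq I$ exceeds $N(I)$ only by a bounded ramification factor absorbable into the final constant. I would then split $S=(S\setminus T)\cup(S\cap T)$, with $T$ the exceptional set of Proposition \ref{cor1:B}, so that $\prod_{v\in T}q_v\le x^{C_1}$. Strong approximation for the simply connected $\G$, together with Proposition \ref{cor:B} to control the defect coming from passing between $\Lambda'$ and its normalizer, yields an injection $\Lambda/\Lambda(I)\hookrightarrow\prod_{v\in S}\Pa_v/\Pa_v(e_v)$ whose cokernel has order at most $x^{C_2}$. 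Consequently the image of $\Lambda_1$ has index at most $n\cdot x^{C_2}$ in the product, and this index factors as $\prod_v n_v$ where each $n_v\ge 2$ (the latter because $v\in S$ forces the local image to be proper).

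The heart of the argument is the local bound $n_v\ge q_v^{e_v}$ at good places $v\in S\setminus T$. There $\Pa_v$ is hyperspecial, $\G$ is quasi-split over $k_v$ and splits over an unramified extension, so $\Pa_v/\Pa_v(e_v)$ has a natural composition series with the quasi-simple group $\G_v(\F_v)$ on top and copies of $\mathfrak{g}(\F_v)$ on each lower layer $\Pa_v(r-1)/\Pa_v(r)$, with $\G_v(\F_v)$ acting almost irreducibly on these layers via the adjoint representation. The definition of the level forces the image of $\Lambda_1$ in $\Pa_v/\Pa_v(e_v)$ to miss the top-layer kernel $\Pa_v(e_v-1)/\Pa_v(e_v)$, and irreducibility combined with a standard induction on $e_v$ (as in \cite{Lu}) converts this into the multiplicative bound $n_v\ge q_v^{e_v}$ up to a $H$-bounded factor. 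At a bad place $v\in S\cap T$ irreducibility may fail, but the same filtration argument produces the cruder bound $n_v\ge q_v^{e_v}/q_v^{c(H)}$. Multiplying across $v\in S$ and using $\prod_v n_v\le n\cdot x^{C_2}$ together with $\prod_{v\in T}q_v^{c(H)}\le x^{C_1 c(H)}$ yields $N(I)\le n\cdot x^{C}$ for $C=C(H)$, which is the required estimate.

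The main obstacle is precisely the bad-place analysis: Proposition \ref{cor1:B} controls only the product of the residue cardinalities across $T$, not the individual exponents $e_v$, so the local loss at each $v\in T$ must take the form $q_v^{c(H)}$ with an exponent independent of $e_v$ in order for the bad-place contributions to telescope into a polynomial in $x$ rather than compounding with $|S\cap T|$. Securing this sharper form relies on the fact that even at non-hyperspecial parahorics the filtration quotients $\Pa_v(r-1)/\Pa_v(r)$ for $r\ge 2$ retain their $\F_v$-vector-space structure of dimension $\dim\G$ with a controlled (if not irreducible) $\Pa_v/\Pa_v(1)$-action; a second subtlety is that the strong-approximation step needs the maximal-type hypothesis on $\Lambda$ in order for Proposition \ref{cor:B} to apply and bound the cokernel by $x^{C_2}$.
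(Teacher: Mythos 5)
Your proposal follows essentially the same route as the paper's: the paper cites \cite[Prop.\ 6.1.2]{LS}, observes that the implicit constant there is $1$ at all places outside the exceptional set $T$ of Proposition~\ref{cor1:B}, and compensates at the places in $T$ by restarting the induction at the first congruence subgroup --- exactly the good-place/bad-place split you describe, with per-place loss at bad $v$ bounded by a fixed power of $q_v$ and $\prod_{v\in T}q_v\le x^{C_1}$ telescoping the bad-place contribution into a single $x^{O(1)}$ factor. One inaccuracy worth flagging: the lemma concerns $\Lambda$ itself, not the normalizer of $\Lambda'$, and strong approximation already makes $\Lambda/\Lambda(I)\to\prod_{v\in S}\Pa_v/\Pa_v(e_v)$ surjective, so the ``cokernel'' you estimate is in fact trivial and Proposition~\ref{cor:B} plays no role here --- its invocation is a red herring, though a harmless one since it only inserts a spurious extra $x^{O(1)}$ factor that gets absorbed into $C$.
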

\begin{proof}
A similar result is proved in \cite[Prop. 6.1.2]{LS} but the proposition there provides only $m\le C_0n$ for some constant $C_0$ depending on $\Lambda$. In fact, the proof of the proposition gives $C_0 = 1$ if certain conditions (i)--(iv) are satisfied for all primes. The role of $C_0$ is to compensate for the bad primes. Now, if $\Lambda$ is a principal arithmetic subgroup of a maximal type as described above, then the conditions (i)--(iv) are satisfied for all the primes $v\in V_f\smallsetminus T$, where $T$ is the set from Proposition \ref{cor1:B}. We need to compensate for the primes $v\in T$. For each one of them, we can start the induction argument in the proof of Proposition 6.1.2 \cite{LS} from the first congruence subgroup so, by Proposition \ref{cor1:B}, we can replace $C_0$ by $x^C$ for some constant $C$ depending only on $H$.
\end{proof}

\begin{rem}
Note that the index of $m\cO$ in $\cO$ (and hence also of $\Lambda(m\cO)$ in $\Lambda$) is not necessarily polynomial in $m$, but rather it is bounded by $m^d$ where $d$ is the degree of the defining field $k$ of the arithmetic subgroup $\Lambda$. As $d$ is bounded by $O(\log x)$, the index of $\Lambda(m\cO)$ in $\Lambda$ is bounded by $(xn)^{c\log x}$.
A better result is probably true: $\Lambda_1 \supseteq \Lambda(I)$ for some $I\lhd\cO$ such that $[\Lambda:\Lambda(I)]\le (xn)^c$ with a constant $c$ depending only on $H$. This indeed follows from Lemma \ref{level vs index} if the degree of the field $k$ is bounded.
\end{rem}

\section{Counting covers versus counting manifolds} \label{section:covers vs manifolds}

The results of this paper rely heavily on ``subgroup growth'' (\cite{LS})
but there is a crucial difference: If $M$
is a finite volume manifold covered by a symmetric space $X = H/K$ ($H$ is a
semisimple Lie group and $K$ is a maximal compact subgroup of $H$) with $\Gamma
= \pi_1(M)$, then there is a one-to-one correspondence between the $n$-sheeted
{\bf covers} of $M$ and the $\Gamma$-conjugacy classes of index $n$ subgroups
of $\Gamma$. Thus, if $a_n(\Gamma)$ denotes the number of subgroups of $\Gamma$
of index $n$ and $b_n(M)$~--- the number of $n$-sheeted covers of $M$, then
$$b_n(M) \le a_n(\Gamma) \le n b_n(M).$$
Thus, counting subgroups and counting covers are essentially the same, up to a
linear factor. On the other hand, in this paper we count {\bf manifolds}, so
two covers of $M$ are identified if they are isomorphic as manifolds even if
they are not isomorphic as covers. In group theoretic terms it means that we
are counting ${\rm Iso} (X)$-conjugacy classes of lattices, where ${\rm Iso} (X)$
is the group of isometries of $X$. Now, $H$ is of finite index in ${\rm Iso} (X)$
and so, up to a constant factor, we are counting $H$-conjugacy classes
of lattices in $H$.
\medskip

Ideally, what we would like to have is:
\begin{conj}\label{conj_s3}
There exists a constant $c = c(H)$, such that if $\Gamma$ is a lattice in $H$
and $\Gamma_1$ is a subgroup of $\Gamma$ of covolume at most $x$ in $H$, then
the number of subgroups of $\Gamma$ which are $H$-conjugate to $\Gamma_1$ is
bounded by $x^c$ if $x$ is large enough.
\end{conj}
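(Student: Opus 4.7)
\medskip

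The plan is to combine Margulis's arithmeticity and commensurator theorems with the Bruhat--Tits analysis of \S\ref{section:arithmetic}, reducing the estimate to a local counting problem on buildings.

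First, since $\R$-rank$(H)\ge 2$, every lattice $\Gamma$ in $H$ is arithmetic, and its commensurator in $H$ may be identified with $\phi(\G(k))$ for an admissible pair $(\G,k)$. If $g\Gamma_1 g^{-1}\subseteq\Gamma$, then $\Gamma_1$ and $\Gamma$ are commensurable in $H$, so they share the same commensurator, forcing $g\in\phi(\G(k))$. Hence the number of subgroups of $\Gamma$ which are $H$-conjugate to $\Gamma_1$ equals the number of cosets $gN_{\phi(\G(k))}(\Gamma_1)$ with $g\in\phi(\G(k))$ satisfying $g\Gamma_1 g^{-1}\subseteq\Gamma$.

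Next, I would pass to principal arithmetic subgroups $\Lambda_1,\Lambda$ of maximal type contained in $\Gamma_1,\Gamma$ with indices bounded by $x^{C_2}$, using Proposition~\ref{cor:B}. Writing $\Lambda=\phi(\G(k)\cap\prod_v\Pa_v)$ and $\Lambda_1=\phi(\G(k)\cap\prod_v\Pa_{1,v})$, and invoking strong approximation, the condition $g\Lambda_1 g^{-1}\subseteq\Lambda$ decouples into the local conditions $g\Pa_{1,v}g^{-1}\subseteq\Pa_v$ inside $\G(k_v)$ for every finite place $v$. The global count thus factors as a product of local transporter counts modulo local normalizers.

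Third, I would bound each local factor. Let $T$ be the bad set of Proposition~\ref{cor1:B}. For $v\notin T$, $\Pa_v=\Pa_{1,v}$ is hyperspecial, hence maximal compact in $\G(k_v)$; the inclusion $g\Pa_{1,v}g^{-1}\subseteq\Pa_v$ therefore forces equality and $g\in N_{\G(k_v)}(\Pa_v)$, contributing a factor $1$ at such $v$. For $v\in T$, the number of $\G(k_v)$-conjugates of $\Pa_{1,v}$ contained in $\Pa_v$ is bounded by the number of simplices of a fixed type in a bounded combinatorial neighborhood of the simplex stabilized by $\Pa_v$ in the building $\B(\G/k_v)$, which grows polynomially in $q_v$ with an exponent depending only on the Lie type of $H$. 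Combined with $\prod_{v\in T}q_v\le x^{C_1}$ from Proposition~\ref{cor1:B}, this yields a total contribution bounded by $x^{O(1)}$.

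The main obstacle, which is precisely why this statement is a conjecture rather than a theorem here, is obtaining a \emph{uniform} polynomial bound on the local contribution at the bad places: one has to control the combinatorial diameter of the region in $\B(\G/k_v)$ that can accommodate a $\G(k_v)$-conjugate of $\Pa_{1,v}$ inside $\Pa_v$, uniformly in $v$ and in the parahoric types involved. A secondary difficulty is quantifying the gap between $N_H(\Gamma_1)$ and $N_{\phi(\G(k))}(\Lambda_1)$, and converting adelic index bounds into covolume bounds via Lemma~\ref{level vs index}; these steps introduce factors which must be absorbed into a single polynomial exponent $c=c(H)$ independent of the degree of the defining field $k$, and controlling this dependence is the central subtlety of the whole program initiated in \S\ref{section:covers vs manifolds}.
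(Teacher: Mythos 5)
This statement is explicitly an \emph{open conjecture} in the paper, not a theorem. Immediately after stating it, the authors write ``We do not know if this conjecture is true or just a wishful thinking,'' and what they actually establish is a weaker substitute: Proposition~\ref{prop_s3} and Corollary~\ref{cor_s3}, which bound the number of $\adG(k)$-conjugates of $\Lambda_1$ inside $\Lambda$ only for congruence subgroups $\Lambda_1 \supseteq \GI$, and the bound is $n\,x^C\bigl(\prod_{v\in I}q_v\bigr)^{\dm(\G)}$, a quantity that is \emph{not} in general polynomial in the covolume of $\Lambda_1$. So there is no ``paper's own proof'' of the conjecture to compare against; you should be evaluating your sketch against Proposition~\ref{prop_s3}, not against Conjecture~\ref{conj_s3}.

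Treated as a heuristic, your sketch contains a genuine confusion in the local analysis. You let $T$ be the set from Proposition~\ref{cor1:B} and assert that $\Pa_{1,v}=\Pa_v$ for $v\notin T$, forcing a local contribution of $1$ there. But $T$ is the bad set of the \emph{maximal} lattice $\Gamma$: it controls where $\G$ fails to be quasi-split or $\Pa_v$ fails to be hyperspecial. It says nothing about where the local closure of $\Gamma_1$ is a proper subgroup of $\Pa_v$. For a deep congruence subgroup $\Gamma_1$ of covolume $\le x$, the set of places where $\Gamma_1$ is proper in $\Lambda$ locally can have $O(\log x)$ elements (corresponding to a level $m$ with $m\le x^C n$ by Lemma~\ref{level vs index}), and at each such $v$ the local index in $\Pa_v$ can be a fixed power of $q_v$. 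So the local factors do not vanish outside a covolume-controlled set, and the total contribution you would get this way is of order $x^{O(\log x)}$ rather than $x^{O(1)}$ --- exactly the phenomenon flagged in the Remark after Lemma~\ref{level vs index}, where the index of $\Lambda(m\cO)$ in $\Lambda$ is bounded only by $(xn)^{c\log x}$ because the field degree $d_k=O(\log x)$ appears in the exponent. You correctly identify the uniformity-in-$d_k$ issue in your final paragraph, but the body of your sketch does not account for it, and it is precisely what prevents the argument from proving Conjecture~\ref{conj_s3}. The paper sidesteps this by proving the weaker Corollary~\ref{cor_s3} and then, in \S\ref{sec:thm1 lower}, applying it with a specific, carefully chosen $I$ (a single rational prime $p'$), for which $\prod_{v\in I}q_v\le p'^{d}$ and everything becomes exponential in $d$ and hence polynomial in $x$.

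Finally, a minor but real imprecision: the commensurator of an arithmetic lattice identifies with $\phi(\adG(k))$ (the $k$-points of the adjoint group) rather than $\phi(\G(k))$; the paper is careful about this distinction throughout \S\ref{section:covers vs manifolds}, and it matters because the action on parahorics is via the adjoint group.
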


We do not know if this conjecture is true or just a wishful thinking. In this
section we shall establish a weaker version which will suffice for our
applications.

\medskip

Observe first that if $\Gamma_1$ and $\Gamma_2$ are index $n$ subgroups of a
lattice $\Gamma$ in $H$, then $\Gamma_1\backslash H/K$ is isometric to
$\Gamma_2\backslash H/K$ if and only if there exists $h\in {\rm Iso} (X)$ which
conjugates $\Gamma_1$ to $\Gamma_2$, i.e. $\Gamma_1$ and $\Gamma_2$ are
conjugate in ${\rm Iso} (X)$. For counting purposes (up to a constant factor)
we can assume $h\in H$. Such an $h$ conjugating $\Gamma_1$ to $\Gamma_2$ is an
element of the commensurability group $\Comm (\Gamma) = \{ h\in H \mid [\Gamma
: \Gamma \cap h^{-1}\Gamma h] < \infty \}$. Recall that if $\Gamma$ is
non-arithmetic irreducible lattice in $H$, then $[\Comm (\Gamma):\Gamma] <
\infty$ by a well known result of Margulis (\cite[Theorem~1, p.~2]{Ma}). This
implies that counting covers of a non-arithmetic manifold $M$ is, up to a
constant factor (depending on $M$, though), the same as counting manifolds
covering $M$. This is the reason why in~\cite{BGLM} the lower bound on the
number of hyperbolic manifolds was presented using covers of non-arithmetic
manifolds. A similar remark applies in a different context to~\cite{BL1}. But,
in this paper, when we deal with the higher-rank $H$, all lattices are
arithmetic and so we must consider the delicate issue of the difference
between isomorphism classes of covers and isomorphisms of manifolds.
% Fortunately, $\Comm (\Gamma)$ can be easily described.

\medskip

Let $\G$ be an absolutely simple, simply connected algebraic group defined over
a number field $k$ and let us fix a $k$-embedding $\G \subset \GL_s$ for some $s$.
Let $\mathrm{Z}$ denote the center of $\G$ and $\pi: \G\to\adG := \G/\mathrm{Z}$ be the
natural projection  defined over $k$. If $\Gamma$ is commensurable to $\G(\cO) = \G(k)
\cap \GL_s(\cO)$ ($\cO$ is the ring of integers of $k$), then
$\pi(\Gamma)\subset\adG(k)$, ${\rm Comm}_\G\:(\Gamma) = {\rm Comm}_\G\:(\G(\cO))$ and $\pi({\rm
Comm}_\G\:(\Gamma))$ is also in $\adG(k)$ (see e.g. \cite[Lemma VII.6.2]{Ma}).

For every $v\in V_f$, let $\Pa_v$ be a maximal parahoric subgroup of $\G(k_v)$
such that $\Pa = (\Pa_v)_{v\in V_f}$ is a coherent collection. By the Bruhat-Tits
theory (see \S\ref{sec:BT}), for every $v$ there exists a smooth affine
group scheme $\G_v$ defined over $\cO_v$, the ring of integers of $k_v$, such
that $\G_v(\cO_v) = \Pa_v$ and $\G_v(k_v)$ is $k_v$-isomorphic to $\G(k_v)$. Let
$\Kv$ be the normal pro-$p$ subgroup of $\Pa_v$, $\Kv = \Ker(\G_v(\cO_v) \to
\G_v(\F_{q_v}))$ where $\F_{q_v} = \cO_v/m_v$ is the residue field of $\cO_v$
w.r.t. the maximal ideal $m_v$.

Recall that when $\G(k_v) = \G_v(k_v)$ acts on the Bruhat-Tits building $\B_v$
associated with it, $\Pa_v$ is the stabilizer of some vertex $w_v\in\B_v$ and
$\Kv$ is the set of elements of $\G(k_v)$ which fixes pointwise the link of
$w_v$. This link is isomorphic to the projective building of the finite group
$\G_v(\F_{q_v})$, in particular, this implies that the number of vertices of
the link is at most $\#\G_v(\F_{q_v}) \le q_v^{\dm(\G)}$.

Let $\Lambda$ be the principal arithmetic subgroup of $\G(k)$ associated with
$\Pa = (\Pa_v)_{v\in V_f}$ as in \S\ref{sec:ar}, i.e. $\Lambda = \G(k)\cap\prod_{v\in
V_f}\Pa_v$. Let $I\subset V_f$ be a fixed finite subset of nonarchimedean
places of $k$. It defines an ideal of $\cO$ which we denote by the same letter.
The group $\Lambda$ is embedded diagonally in $\prod_{v\in I} \G(k_v)$.
Let $\Lambda(I) = \Lambda \cap \prod_{v\in I} \Kv$, the $I$-congruence
subgroup of $\Lambda$. It is a finite index normal subgroup of $\Lambda$ and
the index is bounded by $\prod_{v\in I}q_v^{\dm(\G)}$.

Denote by $\oG$ the image $\pi(\Lambda)$ of $\Lambda$ in $\adG(k)$. The group $\adG(k)$ acts on $\G(k)$ by the adjoint action. Let
$$N(\GI, \Lambda) = \{ g\in\adG(k) \mid g(\GI) \subseteq \Lambda \}.$$
This is not a subgroup but rather a union of finitely many cosets of $\oG$ including $\oG$ itself. We call the number of these cosets the {\it index of $\oG$ in $N(\GI, \Lambda)$} and denote it by $[N(\GI, \Lambda):\oG]$.

\begin{prop}\label{prop_s3}
Let $\Lambda$ be a principal arithmetic subgroup associated with $\Pa = (\Pa_v)_{v\in V_f}$, such that $\Pa_v$ is a maximal parahoric
subgroup for every $v$. If $\mu(H/\Lambda') \le x$, then for every ideal $I$ as above,
$$[N(\GI, \Lambda):\oG] \le x^C\left( \prod_{v\in I}q_v \right)^{\dm(\G)},$$
where $C = C(H)$ is a constant.
\end{prop}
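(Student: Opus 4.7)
The plan is to introduce an intermediate subgroup $N^0$ with $\oG \subseteq N^0 \subseteq N(\GI, \Lambda)$ and bound each of the two resulting indices separately. Define
\[
N^0 := \adG(k) \cap \prod_{v\in V_f} \hat\Pa_v,
\]
where $\hat\Pa_v := \mathrm{Stab}_{\adG(k_v)}(w_v)$ is the full stabilizer of $w_v$ in the adjoint group. The inclusion $\oG \subseteq N^0$ is immediate from $\oG = \pi(\Lambda) \subseteq \prod_v \pi(\Pa_v) \subseteq \prod_v \hat\Pa_v$. The inclusion $N^0 \subseteq N(\GI, \Lambda)$ holds because each $g_v$ fixing $w_v$ normalizes both $\Pa_v$ (the stabilizer of $w_v$) and $\Kv$ (the pointwise stabilizer of the link of $w_v$, which $\hat\Pa_v$ permutes), so by strong approximation $g\Lambda g^{-1}=\Lambda$ and a fortiori $g\GI g^{-1}\subseteq\Lambda$.

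To bound $[N(\GI, \Lambda) : N^0]$, consider the map $\psi\colon g \mapsto (g_v^{-1}w_v)_{v\in I}$. Taking closures via strong approximation, the hypothesis $g\GI g^{-1}\subseteq\Lambda$ yields $g_v \Kv g_v^{-1}\subseteq\Pa_v$ for every $v\in I$, which is equivalent to saying that $\Kv$ fixes $g_v^{-1}w_v$. Since $\Kv$ is the pointwise stabilizer of the link of $w_v$, the vertex $g_v^{-1}w_v$ must lie in the closed star of $w_v$, whose vertex set has cardinality at most $1 + \#\G_v(\F_{q_v}) \leq q_v^{\dm(\G)}$ by the discussion in \S\ref{sec:BT}. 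The map $\psi$ is constant on $N^0$-cosets (since $N^0$-elements fix every $w_v$) and injective modulo $N^0$, giving
\[
[N(\GI, \Lambda) : N^0] \leq \prod_{v\in I} q_v^{\dm(\G)}.
\]

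It remains to show $[N^0 : \oG] \leq C_1 x$ for a constant depending only on $H$. Both $\oG$ and $N^0$ are arithmetic lattices in $G' := \adG(k \otimes_\Q \R)^o$ with $\oG \subseteq N^0$, and up to a multiplicative constant depending only on $H$, the covolume of $\oG$ in $G'$ is bounded by $\mu(H/\Lambda') \leq x$. The Kazhdan--Margulis theorem provides a uniform positive lower bound $c_H$ on the covolume of any lattice in $G'$, so $[N^0 : \oG] = \mu(G'/\oG)/\mu(G'/N^0) \leq C_1 x$. Combining with the previous step yields $[N(\GI, \Lambda) : \oG] \leq C_1 x \prod_{v\in I} q_v^{\dm(\G)} \leq x^C \prod_{v\in I} q_v^{\dm(\G)}$ for a suitable $C = C(H)$ when $x$ is large.

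The main technical difficulty is the geometric estimate in the second paragraph, which rests on characterizing $\Kv$ as the pointwise stabilizer of the link of $w_v$ so that $\mathrm{Fix}(\Kv)$ is confined to a bounded neighborhood of $w_v$ whose vertex count is controlled by $\#\G_v(\F_{q_v})$. A secondary point is to verify that $N^0$ is genuinely a lattice in $G'$ commensurable with $\oG$; this follows because $N^0$ is an arithmetic subgroup of $\adG(k)$ defined by integrality conditions at every finite place and containing the lattice $\oG$.
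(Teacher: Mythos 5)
Your proposal follows essentially the same route as the paper. Your $N^0 = \adG(k)\cap\prod_v\hat\Pa_v$ is exactly the paper's $N(\Lambda,\Lambda) = \ocP\cap\adG(k)$ (since $\overline{\Pa}_v$, the stabilizer of $\Pa_v$ in $\adG(k_v)$, is the same as the stabilizer of $w_v$), and the two-step factorization $[N(\GI,\Lambda):\oG]\le[N(\GI,\Lambda):N^0]\cdot[N^0:\oG]$ together with the building-theoretic estimate giving $\prod_{v\in I}q_v^{\dm(\G)}$ is precisely the paper's argument, which is stated adelically there as $[N(\cK,\cP):\ocP] = \prod_{v\in I}[N(\Kv,\Pa_v):\overline{\Pa}_v]$. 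The one genuine divergence is the last step: you bound $[N^0:\oG]$ by a direct covolume comparison via Kazhdan--Margulis (getting $O(x)$), while the paper invokes Proposition \ref{cor:B} (Corollary 6.1 of \cite{B}), which gives $x^{C_2}$. Your route is more self-contained, but it glosses over two points that need checking: (1) the Haar measures on $\G(k\otimes_\Q\R)^o$, on $G'=\adG(k\otimes_\Q\R)^o$, and on $H$ must be compared compatibly across the isogeny $\pi$ and the quotient with compact kernel, with constants depending only on $H$ and not on the degree $d_k$, which is unbounded here; and (2) the Kazhdan--Margulis lower bound must be uniform over the family of groups $G'$ (which vary with $k$) --- this does hold since every irreducible lattice in $G'$ embeds into its non-compact part, which is fixed by $H$, but it is a point worth making explicit. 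The citation of \cite{B} packages exactly these bookkeeping issues.
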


\begin{proof}
Denote
\begin{gather*}
\cP = \prod_{v\in V_f}\Pa_v \subset \prod_{v\in V_f}{}^{\!\!\!'}\G(k_v)\quad \text{and} \quad \cK = \prod_{v\in I} \Kv \times \prod_{v\in V_f\smallsetminus I}\Pa_v.
\end{gather*}
Then, by the strong approximation theorem \cite[Theorem 7.12, p. 427]{PR}, $\Lambda$ (resp. $\GI$) is dense in $\cP$ (resp. $\cK$) and $\cP\cap\G(k) = \Lambda$ (resp. $\cK\cap\G(k) = \GI$), when $\G(k)$ is embedded diagonally in $\prod_{v\in V_f}'\G(k_v)$.

Let now
$$N(\cK,\cP) = \{ g\in\adG(\A_f) \mid g(\cK) \subseteq \cP \}.$$
It is easy to see that $N(\cK,\cP) \supseteq N(\GI, \Lambda) \supseteq \oG$. Indeed, if $g\in N(\GI, \Lambda)$, then it is in $N(\cK,\cP)$ by the density of $\Lambda$ (resp. $\GI$) in $\cP$ (resp. $\cK$) and the continuity of the action. The second inclusion is obvious. This implies
$$[N(\GI, \Lambda):\oG] \le [N(\cK, \cP):\ocP] \cdot [N(\Lambda, \Lambda):\oG],$$
where $\ocP  = N(\cP, \cP) = \prod_{v\in V_f}\overline{\Pa}_v$, $\overline{\Pa}_v$ is the stabilizer of $\Pa_v$ in $\adG(k_v)$, and $\ocP\cap\adG(k) = N(\Lambda,\Lambda)$. Now, by Proposition \ref{cor:B}, $[N(\Lambda, \Lambda):\oG] \le x^C$.

If $v\in V_f\smallsetminus I$, then the projections of $\cK$ and $\cP$ to $\G(k_v)$ are both $\Pa_v$, so if $g \in N(\cP,\cK)$, its $v$-component is in the stabilizer  $\overline{\Pa}_v$ of $\Pa_v$.
For $v\in I $, let us denote $N(\Kv, \Pa_v) = \{ g\in\adG(k_v) \mid g(\Kv) \subseteq \Pa_v \}$  and let $[N(\Kv, \Pa_v):\overline{\Pa}_v]$ denotes the number of $\overline{\Pa}_v$-cosets in $N(\Kv, \Pa_v)$. Clearly, $N(\cK,\cP)$ is contained in $\prod_{v\in I}N(\Kv, \Pa_v)\times\prod_{v\in V_f\smallsetminus I}N(\Pa_v, \Pa_v)$, which implies
$$
[N(\cK, \cP):\ocP] =
\prod_{v\in I} [N(\Kv, \Pa_v):\overline{\Pa}_v] \cdot \prod_{v\in V_f\smallsetminus I} [N(\Pa_v, \Pa_v):\overline{\Pa}_v] = \prod_{v\in I} [N(\Kv, \Pa_v):\overline{\Pa}_v].
$$
We shall show that $[N(\Kv, \Pa_v):\overline{\Pa}_v] \le q_v^{\dm(\G)}$ which will finish the proof.

The subgroup $\Pa_v$, being a maximal parahoric subgroup of $\G(k_v)$, is the
stabilizer of a vertex $w_v$ of $\B_v$ and $\Kv$ is the subgroup of
$\G(k_v)$ which fixes pointwise all the vertices $w\in\B_v$ of distance at most
$1$ from $w_v$, and the fixed point set of $\Kv$ is exactly this set.
Thus if $g\in N(\Kv, \Pa_v)$, then the fixed point set of
$g(\Kv)$ includes $w_v$, which is equivalent to $g(w_v)$ being fixed by
$\Kv$, i.e., $g(w_v)$ is of distance $\le 1$ from $w_v$. As it was pointed out
above, the link of a vertex of the Bruhat-Tits building of $\G(k_v)$ has order at
most $q_v^{\dm(\G)}$. The number of cosets of $\overline{\Pa}_v$ in $N(\Kv, \Pa_v)$ is,
therefore, also bounded by $q_v^{\dm(\G)}$ and the proposition is now proven.
\end{proof}

\begin{cor}\label{cor_s3}
If $\Lambda_1$ is a subgroup of index $n$ in $\Lambda$ containing $\GI$, then the
number of subgroups of $\Lambda$ which are conjugate to $\Lambda_1$ within $\adG(k)$ is
bounded by $n x^C \left(\prod_{v\in I}q_v \right)^{\dm(\G)}.$
\end{cor}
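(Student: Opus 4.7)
The plan is to combine Proposition \ref{prop_s3} with a routine orbit-counting argument. The key observation is that the containment $\GI \subseteq \Lambda_1$ forces every $\adG(k)$-conjugate of $\Lambda_1$ that lies in $\Lambda$ to be witnessed by an element of $N(\GI, \Lambda)$, to which the proposition applies.

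First I would reduce the problem to $N(\GI, \Lambda)$: if $g \in \adG(k)$ satisfies $g(\Lambda_1) \subseteq \Lambda$, then since $\GI \subseteq \Lambda_1$ we have $g(\GI) \subseteq \Lambda$, i.e.\ $g \in N(\GI, \Lambda)$. By Proposition \ref{prop_s3}, the set $N(\GI, \Lambda)$ decomposes into at most $x^C \bigl(\prod_{v \in I} q_v\bigr)^{\dm(\G)}$ cosets of $\oG = \pi(\Lambda)$.

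Next I would bound the number of distinct subgroups $g(\Lambda_1)$ arising from a single coset $g_0 \oG$. For $g = g_0 \bar\lambda$ with $\bar\lambda = \pi(\lambda) \in \oG$ and $\lambda \in \Lambda$, the adjoint action gives
\[
g(\Lambda_1) = g_0\bigl(\lambda \Lambda_1 \lambda^{-1}\bigr),
\]
so as $\bar\lambda$ ranges over $\oG$, the set of subgroups $g(\Lambda_1)$ is the $g_0$-image of the set of $\Lambda$-conjugates of $\Lambda_1$ (the adjoint action of $\pi(\lambda)$ on $\G(k)$ is conjugation by $\lambda$, and the center of $\G$ acts trivially). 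The number of such conjugates equals $[\Lambda : N_\Lambda(\Lambda_1)]$, which is bounded above by $[\Lambda : \Lambda_1] = n$.

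Multiplying the two bounds yields
\[
n \cdot x^C \Bigl(\prod_{v \in I} q_v\Bigr)^{\dm(\G)},
\]
which is the claim. There is no real obstacle here: the only nontrivial input is Proposition \ref{prop_s3}, and the remaining work is just passing from cosets in $N(\GI,\Lambda)/\oG$ to conjugacy classes under $\Lambda$ inside $\Lambda$ and bounding the latter by the index $n$.
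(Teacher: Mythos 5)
Your proof is correct and follows essentially the same route as the paper's: reduce to $N(\GI,\Lambda)$ via the inclusion $\GI\subseteq\Lambda_1$, invoke Proposition~\ref{prop_s3} to bound the number of $\oG$-cosets, and observe that within a single coset the possible images are $\Lambda$-conjugates of $\Lambda_1$, of which there are at most $n$. The only difference is that you spell out the orbit-stabilizer estimate $[\Lambda:N_\Lambda(\Lambda_1)]\le[\Lambda:\Lambda_1]=n$, which the paper leaves implicit.
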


\begin{proof}
The number of $\Lambda$-conjugates of $\Lambda_1$ is at most $n$. Now, if $g\in\adG(k)$ and $g(\Lambda_1) = \Lambda_2\subseteq\Lambda$, then $g(\GI) \subseteq \Lambda$, and so $g\in N(\GI, \Lambda)$. The latter contains at most $x^C \left(\prod_{v\in I}q_v \right)^{\dm(\G)}$ cosets of $\oG$ by the proposition, therefore the total number of possibilities for $\Lambda_2$ is bounded by $n x^C \left(\prod_{v\in I}q_v \right)^{\dm(\G)}.$
\end{proof}

\section{Proof of the lower bound}\label{sec:thm1 lower}

Our strategy will be the following: By using an asymptotically
bounded sequence of fields $k_i$ of degree
$d_i$ over $\Q$, we shall construct principal arithmetic subgroups $\Lambda_i$
in $H$ of covolume bounded by $c_1^{d_i}$ for some constant $c_1$. We then
present in each $\Lambda_i$, $c_2^{d_i^2}$ subgroups of index at most
$c_3^{d_i}$ (where $c_1$, $c_2$, $c_3$ are constants $>1$). We further
show that ``generically'' these subgroups are not conjugate to each other. We
therefore can deduce that asymptotically $H$ has at least $c_2^{d_i^2}$
non-conjugate lattices of covolume at most $(c_1c_3)^{d_i}$. This will prove
the lower bound in Theorem \ref{theorem} with $a = \log c_2/ (\log c_1c_3)^2$.

\medskip

If $H$ is a real simple Lie group, let $(k_i)$ be a totally real infinite class field tower as in Theorem \ref{thm:GS}, and if $H$ is complex let $(k_i)$ be an asymptotically bounded sequence provided by Corollary \ref{cor:Pisot} with $t = r_2(k_i) = 1$. In both cases $d_i = d_{k_i} \to \infty$ and $rd_i = \D_{k_i}^{1/d_i} \le c_0$, for an absolute constant $c_0$.

% We first consider the case when $H = \G_0(\R)^o$, $\G_0$ is an absolutely simple $\R$-group.
% \medskip
%
% The following results will be used in the proof:
%
% \begin{prop} \cite[Prop.~4]{PR06} \label{thm:PR}
% Let $k$ be a number field, $l$ be a totally imaginary quadratic extension of
% $k$. Fix a nonarchimedean place $v_0$ of $k$ which does not split in $l$.
% Assume that for every archimedean place $v$ of $k$, we are given an absolutely
% almost simple, simply connected, algebraic $k_v$-group $\G^{(v)}$. We assume that
% the groups $\G^{(v)}$ are of the same absolute type for all archimedean $v$, and
% if the subset $V_r\subset V_\infty$ of the real places is nonempty, then for
% all $v\in V_r$, the $\G^{(v)}$'s are either all of inner type or all are of outer
% type over $k_v$ ($=\R$). Then there exists a $k$-form $\G$ such that
% \begin{itemize}
% \item[(1)] $\G$ is $k_v$-isomorphic to $\G^{(v)}$ for $v\in V_\infty$;
% \item[(2)] $\G$ splits over $l$;
% \item[(3)] $\G$ is quasi-split over $k_v$, for every $v\in V_f\smallsetminus \{ v_0 \}$;
% \end{itemize}
% \end{prop}
% \noindent (It was pointed out in \cite{PR06} that the main part of this result was already
% established in \cite{BH} although is not stated there explicitly.)
% \medskip

Let $k = k_i$ be one of the fields and $d = d_k$. In order to construct arithmetic lattices in $H$ which are defined over $k$ and have certain properties we appeal to results of \cite{BH} and \cite{PR06}.

Let $\tilde{H}$ be the simply connected cover of $H$ and let $\tilde{H}_{cpt}$ be its compact real form. Recall that the real groups of types $\Bn_n$, $\Cn_n$, $\En_7$, $\En_8$, $\Fn$ and $\Gn$ are inner, while types $\An_n$, $\Dn_n$ and $\En_6$ admit both inner and outer real forms. Moreover, compact groups of types $\An_n$ ($n>1$), $\Dn_{2n+1}$ and $\En_6$ are outer (cf. \cite{Tits9}). We define an extension $l$ of the field $k$ as follows:
\begin{itemize}
\item[(i)] If $\tilde{H}$ is either complex or it is real and inner and if $\tilde{H}_{cpt}$ is inner, let $l = k$;
\item[(ii)] If $\tilde{H}$ is either complex or it is real and outer and if $\tilde{H}_{cpt}$ is outer, let $l$ be a quadratic extension of $k$ such that the real places of $k$ do not split in $l$;
\item[(iii)] If $\tilde{H}$ is real outer and $\tilde{H}_{cpt}$ is inner, let $l$ be a quadratic extension of $k$ such that $v_1\in V_\infty(k)$ does not split in $l$ while all the rest real $v$ split;
\item[(iv)] If $\tilde{H}$ is real inner and $\tilde{H}_{cpt}$ is outer, let $l$ be a quadratic extension of $k$ such that $v_1\in V_\infty(k)$ splits in $l$ while all the rest real $v$ do not split in $l$.
\end{itemize}
(We say that a real place $v$ of $k$ splits in a quadratic extension $l$ if there exist two extensions of $v$ to $l$.)

Note that we can always choose $l$ so that $\D_{l/k} \le {c'_0}^{d}$ with some absolute constant $c'_0$: In case (i) it is clear. In case (ii) we can take $l = k[i]$ for which $c_0' = 4$ (as only primes of $k$ which lie over $2$ may possibly ramify in $k[i]$). In case (iii), let  $l = k[\sqrt{1-\theta}]$, and in case (iv), $l = k[\sqrt{\theta-1}]$, where $\theta$ is a Pisot number in $k$ provided by Lemma \ref{lemma:Pisot}(a). To show that in the last two cases $\D_{l/k} \le {c'_0}^{d}$ we can apply the same argument as in Corollary \ref{cor:Pisot}.

\medskip

Let $p_0$ be a fixed rational prime and let $v_0$ be a fixed place of $k$ above $p_0$.

\begin{prop} There exists an absolutely simple simply connected $k$-group $\G$ such that
\begin{itemize}
\item[(1)] $\G(k\otimes_\Q\R)$ admits an epimorphism to $H$ whose kernel is compact (i.e. $\G$ is admissible in the sense of \ref{sec:ar});
\item[(2)] $\G$ is quasi-split over $k_v$ for every $v\in V_f\smallsetminus \{ v_0 \}$;
\item[(3)] The quasi-split inner form of $\G$ splits over $l$.
\end{itemize}
\end{prop}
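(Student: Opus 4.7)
The plan is to realise $\G$ as an inner twist of a suitably chosen quasi-split inner $k$-form $\qsG$, where the twisting cocycle is assembled by prescribing local data and invoking a local-to-global existence theorem for forms of semisimple groups.

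First, I would construct the quasi-split inner $k$-form $\qsG$ of the absolute type of the almost simple factors of $H$, chosen so that the $*$-action of $\Gal(\bar k/k)$ on the Dynkin diagram factors through $\Gal(l/k)$. This yields (3) by construction. Moreover, the extension $l/k$ was engineered in the four cases (i)--(iv) precisely so that the splitting pattern of the archimedean places of $k$ in $l$ matches the inner/outer character of the target real form at each archimedean place: in case (i) one takes $l=k$ and $\qsG$ is the split form, while in (ii)--(iv) the splitting of $v_1$ versus the remaining real places in the quadratic $l$ prescribes which local $*$-actions are trivial and which are nontrivial. Lemma \ref{lemma:Pisot}(b) with $t=1$ is exactly what makes cases (iii) and (iv) realisable within the asymptotically bounded tower.

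Second, I would obtain $\G$ as an inner twist of $\qsG$ corresponding to a class $\xi$ in $H^1(k,\overline{\qsG})$, where $\overline{\qsG}$ denotes the adjoint group of $\qsG$. Specifying local components $\xi_v$ prescribes the local forms $\G(k_v)$, and I would make the following choices: $\xi_v$ trivial at every $v\in V_f\smallsetminus\{v_0\}$, which makes $\G$ quasi-split at every such place and yields (2); at $v_0$ any class, the slot being used to absorb the possible Hasse-principle obstruction; at each archimedean place distinguished by the target factor of $H$, the class corresponding to the noncompact real (or complex) form whose identity component is the relevant almost simple factor of $H$; and at every remaining real place, the class of the compact real form. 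These local choices are what will make $\G(k\otimes_\Q\R)^\circ$ admit an epimorphism onto $H$ with compact kernel, establishing (1).

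Third, I would invoke a local-to-global existence result to produce the global class $\xi$ with the prescribed local restrictions. The Hasse principle of Kneser--Harder--Chernousov for simply connected semisimple $k$-groups, together with the Borel--Harder construction \cite{BH} and its strengthening by Prasad--Rapinchuk \cite{PR06}, gives exactly this kind of existence statement: any collection of local forms that is admissible (quasi-split at almost all finite places, matching the prescribed archimedean type, and compatible in the centre) is realised by a global form. Twisting $\qsG$ by such a $\xi$ then produces the required $\G$. The main obstacle is the archimedean compatibility, because an inner twist cannot change the inner/outer Tits index of a form at a real place, so the quasi-split form $\qsG$ must already carry the correct index at every archimedean place; the case-by-case construction of $l$ and the use of the Pisot numbers from Lemma \ref{lemma:Pisot}(b) are exactly what guarantee this compatibility inside fields of bounded root discriminant, and they are the reason the distinguished real place $v_1$ must be singled out from the remaining ones.
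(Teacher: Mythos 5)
Your proposal follows essentially the same approach as the paper: take the quasi-split inner $k$-form $\G_0$ whose $*$-action is determined by the extension $l/k$ (which gives (3) for free), and then inner-twist it, using the distinguished place $v_0$ to absorb any local-global obstruction and prescribing the compact form at all non-distinguished archimedean places — the paper does exactly this by invoking \cite[Theorem 1(i)]{PR06}. One small correction: the sign conditions needed for cases (iii) and (iv) come directly from Lemma \ref{lemma:Pisot}(a) (a single Pisot number already makes $1-\theta$ negative at $v_1$ and positive at the remaining real places); part (b) is reserved for the semisimple case in \S\ref{sec:semi thm1}.
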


\begin{proof}
Let $\G_0$ be an absolutely simple, simply connected, quasi-split $k$-group of the same absolute type as $H$ which splits over $l$ and does not split over $k$ if $k\neq l$. Similarly to \cite[Props. 4, 5]{PR06} it follows from \cite[Theorem 1(i)]{PR06} that there exists an inner twist $\G$ of $\G_0$ over $k$ which satisfies (1) and (2). Property (3) is satisfied automatically by the definition of $\G_0$, which is the quasi-split inner form of $\G$.
\end{proof}

Let $\Pa = (\Pa_v)_{v\in V_f}$ be a coherent collection of parahoric subgroups
of $\G$ such that for every $v\neq v_0$, $\Pa_v$ is special and it is hyperspecial whenever $l$ is unramified over $k$ at $v$.
% (since $l_v$ is unramified over $k_v$ for $v\neq v_0$, hyperspecial subgroups exist \cite[1.10.2]{Tits}).
Let $\Lambda = \G(k)\cap\prod_{v\in V_f}\Pa_v$ be the corresponding principal
arithmetic subgroup of $\G(k)$. By the definition of $\G$, the projection $\Lambda' = \phi(\Lambda)$ (induced by
$\phi : \G(k\otimes_\Q\R) \to H$) is an arithmetic lattice in $H$. We shall now use
Prasad's formula (see \S\ref{sec:ar_vol}) to compute its covolume:
\begin{equation*}
\mu(H/\Lambda')  = \D_k^{\dm(\G)/2}(\D_{l}/\D_k^{[l:k]})^{\frac12s}
\left(\prod_{i=1}^{r}\frac{m_i!}{(2\pi)^{m_i+1}}\right)^{[k:\Q]}
\tau_k(\G)\:\E(\Pa).
\end{equation*}

By the construction, the field $l$ in the volume formula is the extension of $k$
defined above, so we have
\begin{equation*}
\D_k \le c_0^{d},\ \  \D_{l}/\D_k^{[l:k]} = \D_{l/k} \le {c'_0}^d.
\end{equation*}
Since $\G$ is a simply connected group over a number field $k$, the Tamagawa
number $\tau_k(\G) = 1$.
It remains to analyze the Euler product
$$\E(\Pa) = \prod_{v\in V_f}\frac{q_v^{(\dm(\Mv) + \dm(\cMv))/2}}{\#\Mv (\F_v)}.$$

For $v\neq v_0$, $\G(k_v)$ is quasi-split and $\Pa_v$ is special, so  $\Mv$ is
isomorphic to $\cMv$ over $\F_v$ and $\Mv(\F_v)$ is a finite simple group of the
same type as $\G$. Indeed, since $\Pa_v$ is a maximal parahoric subgroup, the
radical of $\Mv$ is trivial, so $\Mv(\F_v)$ is a finite semisimple group whose
diagram can be obtained by deleting the vertex corresponding to $\Pa_v$ and all
the adjacent edges from the extended Dynkin diagram of $\G(k_v)$. It remains to
recall the definition of the special parahoric subgroups to see that
$\Mv(\F_v)$ is a simple group of the same type as $\G$. So the order of
$\Mv(\F_v)$ is known (see e.g.~\cite{Ono}):
\begin{equation*}
\#\Mv (\F_v) = q_v^{\dm(\Mv)}\prod_{i=1}^{r}(1 \pm q_v^{-(m_i+1)}),
\end{equation*}
(except for the groups of type $\Dn_4$ whose splitting field is of degree $3$ over $\F_v$, but these groups do not arise in our setting). The sign $\pm$ in the formula depends on the splitting type of $\Mv(\F_v)$.

In all the cases we obtain (for $v\neq v_0$):
\begin{equation*}
\#\Mv (\F_v) \ge q_v^{\dm(\Mv)}\prod_{i=1}^{r}(1 - q_v^{-(m_i+1)}).
\end{equation*}
Also, as $\Mv$ is isomorphic to $\cMv$ over $\F_v$, we have $\frac{\dm(\Mv) + \dm(\cMv)}{2} = \dm(\Mv)$.

We now can bound the covolume of $\Lambda'$:
$$\mu(H/\Lambda')  \le
c_0^{d\cdot\dm(\G)/2}{c'_0}^{d\cdot\frac12s}\left(\prod_{i=1}^{r}\frac{m_i!}{(2\pi)^{m_i+1}}\right)^{d}
\!\!\!\lambda_{v_0}\!\!\! \prod_{v\in
V_f}\frac{1}{(1- q_v^{-(m_1+1)})\ldots(1-q_v^{-(m_r+1)})},
$$
\begin{gather*}
\lambda_{v_0} = \frac{(1-q_{v_0}^{-(m_1+1)})\ldots(1-q_{v_0}^{-(m_r+1)})
q_{v_0}^{(\dm(\Mvo)+\dm(\cMvo))/2}}{\#\Mvo (\F_{v_0})}.
\end{gather*}
The $\lambda_{v_0}$-factor corresponds to the distinguished place $v_0$ of $k$
at which we have no control over the structure of $\G$. Still it is easy to
see that
\begin{equation*}
\lambda_{v_0} \le q_{v_0}^{(\dm(\Mvo)+\dm(\cMvo))/2} \le q_{v_0}^{\dm(\G)} \le p_0^{d\cdot\dm(\G)}
\end{equation*}
(here we use the assumption that $v_0$ lies over a fixed prime $p_0$).

Now, the Euler product
\begin{align*}
\prod_{v\in V_f} & \frac{1}{(1-q_v^{-(m_1+1)})\ldots(1-q_v^{-(m_r+1)})}\\
& = \zeta_k(m_1+1)\ldots \zeta_k(m_r+1)\\
& \le \zeta(m_1+1)^d\ldots \zeta(m_r+1)^d \le \zeta(2)^{d r} = \left(\frac{\pi^2}{6}\right)^{d r},
\end{align*}
where $\zeta_k$ is the Dedekind zeta function of $k$ and $\zeta$ is the Riemann
zeta function. The inequalities $\zeta_k(s)\le\zeta(s)^d$ and
$\zeta(s)\le\zeta(2)$ ($s\ge 2$) which we use here are elementary and easy to
check.

We obtain
\begin{align*}
\mu(H/\Lambda') &\le c_1^{d},\\
\text{where } c_1 &= c_0^{\frac12\dm(\G)}{c'_0}^{\frac12s}
\prod_{i=1}^{r}\frac{m_i!}{(2\pi)^{m_i+1}}\: p_0^{\dm(\G)}
\left(\frac{\pi^2}{6}\right)^{r}.
\end{align*}

\begin{rem} Instead of bounding the Euler product $\E(\Pa)$, one can also give its precise
expression (at least up to a rational factor which in our case is
$\lambda_{v_0}$) as a product of Dedekind zeta functions and certain
Dirichlet $L$-functions evaluated at $m_i+1$, $i\le r$. However, in order to
determine the $L$-factors a case-by-case analysis is needed. Since the bound we
get is sufficient for our purpose, we shall not go into details and skip the
case-by-case routine.
\end{rem}

\medskip

Now fix a prime $p'\neq p_0$ and look at the $p'$-congruence subgroup $\Lambda(p')$ of $\Lambda$.
The group $Q = \Lambda/\Lambda(p')$ is a quasi-semisimple
finite group of order at most $p'^{\:d\:\dm(\G)}$, and it contains an elementary
abelian $p'$-group $A$ of dimension at least $d$. This $A$ has at least
$p'^{[\frac14 d^2]}$ subgroups (\cite[Prop.~1.5.2]{LS}), hence $\Lambda$ has
at least $p'^{[\frac14 d^2]}$ subgroups of index at
most $c_3^d$, where $c_3 = p'^{\:\dm(\G)}$. This gives $p'^{[\frac14 d^2]}$ lattices
in $H$ of covolume at most $(c_1c_3)^d$.

We finally claim that any given lattice in this set of $p'^{[\frac14 d^2]}$
lattices has at most $p'^{\:c_4d}$ lattices within the set which are conjugate
to it in $H$. This indeed follows from Corollary~\ref{cor_s3}. Thus we get
$p'^{[\frac14 d^2 - c_4d]}\ge c_2^{d^2}$ different conjugacy classes of lattices in $H$ of
covolume at most $(c_1c_3)^d$ as promised.

\section{Proof of the upper bound} \label{section:upper bounds}\label{sec:uniform upper bnd}

% The proof of the upper bound of Theorem \ref{theorem} makes an essential
% use of two ingredients:
% \begin{itemize}
% \item[(a)] Counting maximal arithmetic lattices (\cite{B} which in turn is
% greatly influenced by \cite{BP}); and
% \item[(b)] Counting congruence subgroups of arithmetic groups (\cite{LN} and
% \cite{GLNP}).
% \end{itemize}

Let us recall the main result of \cite{B} which we are going to use in this section (see also \cite{BGLS} for the groups of type $\An_1$):

\begin{thm} \label{thm:B}
Let $H$ be a semisimple Lie group of real rank $\ge2$ without compact factors. Denote by $\muu$
the number of conjugacy classes of maximal irreducible lattices in $H$ of covolume at most $x$. Then
for every $\epsilon > 0$ there exists $c = c(\epsilon, H)$ such
that $\muu \le x^{c(\log x)^\epsilon}$ for every $x\gg 0$.
\end{thm}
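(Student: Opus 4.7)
The plan is to parametrize each maximal arithmetic lattice by its underlying arithmetic data and bound each piece of this data separately.

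First, by \S\ref{sec:max_ar}, every maximal arithmetic lattice $\Gamma \subset H$ can be realized as the $H$-normalizer of $\Lambda' = \phi(\Lambda)$ for a principal arithmetic subgroup $\Lambda = \G(k) \cap \prod_{v} \Pa_v$ of maximal type in an admissible $k$-group $\G$. Proposition \ref{cor:B} gives $[\Gamma : \Lambda'] \le x^{C_2}$, so writing $y = x^{C_2 + 1}$ it suffices to count the triples $(k, \G, \Pa)$ producing a $\Lambda$ with $\mu(H/\Lambda') \le y$.

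Next, I would use Prasad's volume formula (\S\ref{sec:ar_vol}) to translate this covolume bound into numerical constraints on the data. The factor $\left(\prod_i m_i!/(2\pi)^{m_i+1}\right)^d$ grows super-exponentially in $d = [k:\Q]$ (using $m_r \ge 1$), forcing $d \le c_1 \log y / \log\log y$; the factor $\D_k^{\dm(\G)/2}$ forces $\D_k \le y^{c_2}$. Proposition \ref{cor1:B} further bounds the set $T$ of ``bad'' places -- those where $\G$ is not quasi-split, $l/k$ is ramified, or $\Pa_v$ is not hyperspecial -- by $\prod_{v \in T} q_v \le y^{C_1}$, whence $|T| = O(\log y)$.

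The count then decomposes into three pieces. \textbf{(a)} The number of admissible number fields $k$ of degree at most $c_1 \log y / \log \log y$ and discriminant at most $y^{c_2}$ is bounded (via Hermite--Minkowski type estimates refined by small root-discriminant considerations) by $y^{c_3 (\log y)^\epsilon}$ for every $\epsilon > 0$. \textbf{(b)} For fixed $k$, a Galois cohomology computation via $H^1(k, \mathrm{Aut}(\G_0))$ together with the local-global principle shows that the number of admissible $k$-forms $\G$ is bounded by $y^{c_4}$: $\G$ is determined locally, with only finitely many possibilities at each archimedean place and at each $v \in T$. \textbf{(c)} Given $(k, \G)$, the coherent family $\Pa$ is hyperspecial and essentially unique outside $T$, while at each of the $|T| = O(\log y)$ bad places there are only boundedly many conjugacy classes of parahorics, yielding at most $y^{c_5}$ choices. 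Multiplying these bounds gives $\muu \le x^{c(\log x)^\epsilon}$.

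The main obstacle will be step \textbf{(a)}: the elementary Hermite--Minkowski bound alone produces only $y^{c\cdot d} = y^{O(\log y)}$ fields, which is too crude. To reach $y^{c(\log y)^\epsilon}$ one needs to exploit that admissible $k$ must have very small root discriminant $rd_k \le y^{c/d}$, and invoke a sparsity estimate for number fields with small root discriminant (of Odlyzko--Serre or Ellenberg--Venkatesh flavour) to get the required subpolynomial count for each $d$, then sum over $d \le c_1 \log y / \log\log y$. The remaining steps \textbf{(b)} and \textbf{(c)} are more routine once \S\ref{sec:max_ar} and Proposition \ref{cor1:B} are in hand.
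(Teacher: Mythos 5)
The statement you are proving is not established in this paper at all: it is quoted verbatim from \cite{B} (``Let us recall the main result of \cite{B}\ldots''), so there is no internal proof to compare against. Your outline does, however, match the structure of the actual proof in \cite{B}: parametrize a maximal lattice by the triple $(k,\G,\Pa)$, extract bounds on $d_k$, $\D_k$ and the bad set $T$ from Prasad's formula, and then multiply bounds for the number of fields, forms, and coherent parahoric families. Two remarks.

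First, the place where your proposal stops short is exactly the place where \cite{B} needs genuinely new input. You correctly observe that step \textbf{(a)} is the crux and that Hermite--Minkowski alone gives only $y^{O(\log y)}$ fields, which is useless. The required sparsity estimate for number fields of degree $n$ and discriminant $\le X$ --- roughly $\le X^{\exp(c\sqrt{\log n})}$, which with $n = O(\log y)$ yields $y^{o((\log y)^\epsilon)}$ --- is precisely the content of the Ellenberg--Venkatesh appendix to \cite{B}, and it is not an off-the-shelf Odlyzko--Serre bound (those control $\D_k$ from below, not the number of fields). So your sketch reproduces the architecture of \cite{B} but leaves the one non-routine ingredient as a black box, which you honestly flag.

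Second, a concrete error in the derivation: the factor $\bigl(\prod_i m_i!/(2\pi)^{m_i+1}\bigr)^d$ does \emph{not} grow super-exponentially in $d$ --- it is exactly exponential in $d$, with a base that is in fact $< 1$ for all but the very largest root systems (e.g.\ $1/(2\pi)^2$ for $m_1=1$). It therefore cannot by itself force any bound on $d$, let alone $d\le c_1\log y/\log\log y$. The correct, and sufficient, deduction is: Minkowski gives $\D_k \ge A^d$ with $A>1$, while Prasad's formula plus positivity of the Euler product gives $\D_k^{\dim\G/2}\cdot\kappa^d \le y$ with $A^{\dim\G/2}\kappa>1$ for rank $\ge 2$, whence $d = O(\log y)$. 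The stronger $O(\log y/\log\log y)$ is not available and not needed: with $n = O(\log y)$, $\exp(c\sqrt{\log n}) = \exp(O(\sqrt{\log\log y})) = o((\log y)^\epsilon)$ for every fixed $\epsilon>0$, which is exactly what closes the argument. You should replace that clause by the Minkowski deduction.

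Steps \textbf{(b)} and \textbf{(c)} are in the right spirit but elide some bookkeeping that \cite{B} carries out: one must also bound the number of admissible choices of the set $T$ itself (polynomially, via the ideal-counting function), account for the choice of the extension $\ell/k$ for outer forms, and be careful that forms determined by the same local data at $T\cup V_\infty$ may still differ, which is handled via a finiteness statement for the relevant Tate--Shafarevich-type set. None of these wreck the approach.
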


It is actually conjectured in \cite{B} that $\muu$ is polynomially bounded, but we will not need this conjecture here.

\medskip

We shall count the lattices of covolume at most $x$ by first counting the maximal ones (the number of which is small by Theorem~\ref{thm:B}),
and then counting finite index subgroups within such maximal lattices.

\medskip

In the proof below the following proposition will be used several times.

\begin{prop}\label{prop_82}\cite[Prop. 1.3.2(i)]{LS}
Let $G$ be a group, $N\lhd G$ and $Q = G/N$. Then
$$s_n(G) \le s_n(N)s_n(Q)n^{\rk(Q)},$$
where $s_n(X)$ denotes the number of subgroups of $X$ of index at most $n$ and $\rk(Q)$ is the rank of $Q$.
\end{prop}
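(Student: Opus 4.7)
My plan is to stratify the subgroups of $G$ of index at most $n$ by the pair
\[
\Phi(H) := \bigl(H\cap N,\ HN/N\bigr).
\]
First I would note that $[N:H\cap N]$ and $[Q:HN/N]$ are both at most $[G:H]\le n$, so $\Phi$ takes values in a set of cardinality at most $s_n(N)\cdot s_n(Q)$. The proposition will then follow once I bound the size of each fibre $\Phi^{-1}(H_0,Q_0)$ by $n^{\rk(Q)}$.

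To do this I would fix a pair $(H_0,Q_0)$ in the image of $\Phi$, write $\tilde Q_0\subseteq G$ for the preimage of $Q_0$ in $G$, and consider any $H\in\Phi^{-1}(H_0,Q_0)$. Then $H\subseteq\tilde Q_0$, $H\cap N=H_0$ and $HN=\tilde Q_0$; moreover since $N\lhd G$, also $H_0=H\cap N\lhd H$, so the projection $G\to Q$ restricts to a surjection $H\twoheadrightarrow Q_0$ with kernel $H_0$. I would then fix once and for all a set-theoretic section $s:Q_0\to\tilde Q_0$ of $\tilde Q_0\twoheadrightarrow Q_0$. For each $q\in Q_0$, the set of elements of $H$ lying above $q$ is a single coset of the form $H_0\cdot n_q\cdot s(q)$ with $n_q\in N$, and $H$ is completely determined by the resulting function
\[
f_H:Q_0\longrightarrow N/H_0,\qquad f_H(q)=H_0\, n_q,
\]
where $N/H_0$ denotes the left coset space, which has cardinality $[N:H_0]\le n$. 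Note that $N/H_0$ need not carry a group structure, since $H_0$ need not be normal in $N$.

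The key step is to observe that closure of $H$ under multiplication imposes a ``twisted cocycle'' relation on $f_H$: the value $f_H(q_1q_2)$ is determined by $f_H(q_1)$, $f_H(q_2)$ and a fixed $N$-valued factor set attached to $s$, which is common to every $H$ in the fibre. Consequently $f_H$ is determined by its restriction to any generating set of $Q_0$. Since $Q_0$ is a subgroup of $Q$, it can be generated by at most $\rk(Q)$ elements, and each generator admits at most $[N:H_0]\le n$ possible values in $N/H_0$. This yields $|\Phi^{-1}(H_0,Q_0)|\le n^{\rk(Q)}$, and combining this with the bound on the image of $\Phi$ gives $s_n(G)\le s_n(N)\,s_n(Q)\,n^{\rk(Q)}$, as desired.

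The main subtlety, rather than a genuine obstacle, is that $H_0$ is in general not normal in $N$ or in $\tilde Q_0$, so one cannot simply pass to a quotient group and invoke the classical ``count complements to a normal subgroup by $1$-cocycles'' argument. The cocycle bookkeeping through the section $s$ is exactly what replaces that argument in the non-split setting, and once it is in place the remaining verifications are routine.
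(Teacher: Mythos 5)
Your argument is correct and is essentially the same as the proof of Proposition~1.3.2(i) in Lubotzky--Segal, which the paper simply cites: one stratifies the subgroups $H$ of index at most $n$ by the pair $(H\cap N,\,HN/N)$ and bounds each fibre by $n^{\rk(Q)}$ via the $\le\rk(Q)$ choices of lifts of a generating set of $HN/N$. Two small remarks: the cocycle bookkeeping is heavier than needed, since once lifts $h_1,\dots,h_r\in H$ of generators of $Q_0$ are fixed, the identity $H=\langle H_0,h_1,\dots,h_r\rangle$ (valid because $H_0\lhd H$ and the $h_i$ generate $H/H_0\cong Q_0$) already shows $H$ is determined by the $r$ cosets $H_0n_i$; and the line ``$Q_0$ can be generated by at most $\rk(Q)$ elements'' tacitly assumes $Q_0$ is finitely generated, which with the paper's definition of $\rk$ (a supremum over finitely generated subgroups) need not hold for an arbitrary $Q$ (e.g.\ $Q=\Q$ has $\rk(\Q)=1$ yet is not $1$-generated), although it does hold in every case where the paper invokes the proposition.
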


It is easy to see that our results are independent of the
choice of the Haar measure $\mu$ on $H$. For the sake of convenience in this section we shall fix $\mu$
so that $\mu(H/\Gamma) \ge 1$ for every lattice $\Gamma$. This is possible
since by Kazhdan-Margulis theorem (see \cite[Chapter XI]{Rag}) there exists a positive lower bound for the
covolumes of lattices in $H$.

\medskip

We have:
\begin{equation}\label{eq_63}
\ru \le \muu \cdot \sup_{\substack{\Gamma \\ \mu(H/\Gamma)\le x}} \!\!\!\! s_x(\Gamma),
\end{equation}
where $\Gamma$ runs over the maximal lattices in $H$.

Every such maximal $\Gamma$ is equal to $N_H(\Lambda')$ as in Proposition~\ref{cor:B}, where $\Lambda' = \phi(\Lambda)$ in the notations there.

We can first use Proposition \ref{prop_82} to deduce that $s_x(\Gamma) \le s_x(\Lambda')s_x(Q)x^{\rk(Q)}$, where $Q = \Gamma/\Lambda'$.
By Proposition \ref{cor:B}, $|Q|\le x^{c_2}$ hence $s_x(Q) \le |Q|^{\log |Q|} \le x^{c_2^2\log x}$ and $\rk(Q)\le c_2\log x$.
Thus to prove the upper bound of Theorem \ref{theorem} it suffices to give a similar bound for $s_x(\Lambda')$. Clearly, $s_x(\Lambda')\le s_x(\Lambda)$. So it is sufficient to bound $s_x(\Lambda) = s_x(\hat{\Lambda})$, where $\hat{\Lambda}$ is the profinite completion of $\Lambda$.

To estimate $s_x(\hat{\Lambda})$ let us recall that we assume Serre's conjecture, i.e. that $\Lambda$ satisfies the congruence subgroup property. It means that the congruence kernel $C = \Ker(\hat{\Lambda} \to \prod_v\Pa_v)$ is finite. To simplify the exposition we shall assume that $C = \{e\}$ and later explain how to remove this assumption.

So we need to bound from above $s_x(\prod_v \Pa_v)$. Let $T$ be the set of ``bad'' valuations from Proposition \ref{cor1:B}. Thus, $\prod_{v\in T}q_v \le x^{c_1}$ (see there) and for every $v\not\in T$, $\Pa_v$ is hyperspecial. Now we can use Proposition \ref{prop_82} again, this time with $G = \prod_v{\Pa_v}$, $N = \prod_{v\not\in T}{\Pa_v}$ and $Q = \prod_{v\in T}\Pa_v$, to deduce
\begin{equation}\label{eq_8star}
s_x(\Lambda) \le s_x(N) s_x(Q) x^{\rk(Q)}.
\end{equation}

The group $Q$ is a product of $\#T$ $p$-adic analytic compact groups (for possibly different primes $p$). Collecting together those with the same $p$ (i.e. those $v\in T$ which lie over the same rational prime $p$), we get a subgroup $A_p$ such that $A_p \subseteq \prod_{v|p}\SL_s(\cO_v)$, where $\cO$ is the ring of integers of $k$, the field of definition of $\Lambda$, and $s$ is a fixed number such that $H \subseteq \SL_s(\R)$. If $d = d_k$ and $M_v$ is the maximal ideal of $\cO_v$, then we have $p^d = \prod_{v|p}[\cO_v:M_v]^{e_v} = \prod_{v|p} q_v^{e_v} = \prod_{v|p} p^{f_ve_v}$, where $q_v = p^{f_v}$ and $e_v$ denotes the ramification degree. Now, $\SL_s(\cO_v)$ is a $p$-adic analytic virtually pro-$p$ group of dimension $\le s^2f_ve_v$. It follows that $K_p = \Ker(A_p \to \prod_{v|p} \SL_s(\F_{q_v}))$ is a pro-$p$ group of rank at most $s^2d = O(\log x)$ (by \cite[Theorem 5.2 and Theorem 3.8]{DDMS}).
We can bound the rank of $A_p/K_p$ using the following result:
\begin{prop}\label{prop_83} \cite[Cor. 24, p. 326]{LS}
$$\rk(\GL_s(\F_{p^f})) < 2s^2f.$$
\end{prop}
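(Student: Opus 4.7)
I would prove the slightly stronger statement that every subgroup $H \le G := \GL_s(\F_{p^f})$ satisfies $d(H) < 2 s^2 f$, where $d(H)$ denotes the minimal size of a generating set of $H$; taking the supremum then gives $\rk(G) < 2 s^2 f$.

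A first observation reduces the problem to large primes. Since $|G| < (p^f)^{s^2}$, every subgroup $H \le G$ satisfies the crude estimate $d(H) \le \log_2 |H| \le s^2 f \log_2 p$. For $p \le 3$ this is already $< 2 s^2 f$, so only the range $p \ge 5$ requires a sharper argument.

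For $p \ge 5$ I would exploit the $\F_p$-linear structure. The identification $\F_{p^f}^s \cong \F_p^{sf}$ yields an embedding $G \hookrightarrow \GL_n(\F_p)$ with $n = sf$. Then I would appeal to the classical fact that the minimal number of generators of any finite subgroup of $\GL_n(\F_p)$ is linear in $n$; with the (non-sharp) explicit constant $d(H) \le 2n$, this gives $d(H) \le 2 s f \le 2 s^2 f$ as required. The linear-in-$n$ bound, which is the content of \cite[Cor.~24, p.~326]{LS}, is proved by decomposing $H$ via its generalized Fitting subgroup: the normal $p$-part $O_p(H)$ lies in a conjugate of the unipotent radical and hence has rank at most $n(n-1)/2$, while the quotient is controlled through the chief factors of $H$ via CFSG-based estimates on the composition factors of finite linear groups.

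The main obstacle is precisely this linear bound: the naive estimate $\log_2 |H|$ grows like $n^2 \log p$ and is useless for large $p$. Going from quadratic to linear in $n$ requires genuine structural input, and is where the substantive work lies; once it is in hand, the reduction above is a one-line computation.
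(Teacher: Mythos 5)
Your reduction to large primes is fine, and the $p\le 3$ case via $d(H)\le\log_2|H|<s^2f\log_2p<2s^2f$ is correct. The problem is the $p\ge 5$ case, and it is not a small gap: the claimed bound $d(H)\le 2n$ for every finite subgroup $H\le\GL_n(\F_p)$ is false. The group $\GL_n(\F_p)$ contains, inside its unipotent radical, the elementary abelian $p$-group of block matrices $\left(\begin{smallmatrix}I_k & * \\ 0 & I_{n-k}\end{smallmatrix}\right)$ of rank $k(n-k)$, which for $k=\lfloor n/2\rfloor$ is about $n^2/4$. So $\rk(\GL_n(\F_p))$ grows quadratically in $n$, and no bound linear in the degree can hold. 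Indeed your own next sentence concedes this: you note that $O_p(H)$ ``has rank at most $n(n-1)/2$,'' which is already $\Theta(n^2)$ and destroys the linear-in-$n$ headline you appeal to.

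Moreover, the restriction-of-scalars strategy cannot be salvaged by substituting the correct quadratic bound. Embedding $\GL_s(\F_{p^f})\hookrightarrow\GL_{sf}(\F_p)$ and applying any bound of the form $\rk(\GL_n(\F_p))=O(n^2)$ with $n=sf$ yields $O(s^2f^2)$, which overshoots the target $2s^2f$ by a factor of $f$. The point of the statement is precisely that the dependence is quadratic in $s$ but only linear in $f$, and that distinction is erased the moment you forget the $\F_{p^f}$-structure and view everything over $\F_p$; the two parameters must be treated asymmetrically. Finally, note that the paper offers no proof here at all: Proposition~\ref{prop_83} is quoted verbatim from \cite[Cor.~24, p.~326]{LS}, whereas you invoke the same reference for a (nonexistent) linear-in-degree bound. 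What that corollary actually provides is the stated inequality itself, obtained by analyzing $\GL_s(\F_{p^f})$ directly over $\F_{p^f}$ --- bounding the rank of the Sylow $p$-subgroup by the $\F_p$-dimension of a maximal abelian unipotent subgroup (of order $s^2f/4$) and the prime-to-$p$ part separately --- rather than by passing to $\GL_{sf}(\F_p)$.
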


Putting all this together, $Q$ is a product $\prod_{p\in S} A_p$ of finitely many groups $A_p$, where $S$ is the set of rational primes lying below $T$. It has a normal subgroup $K = \prod_{p\in S}K_p$ with $\rk_l(K)\le s^2d = O(\log x)$ for every $l$.
The quotient $Q/K$ is a subgroup of $\prod_{v\in T} \SL_s(\F_{q_v})$ and by Proposition \ref{prop_83},
$$ \rk(\prod_{v\in T} \SL_s(\F_{q_v})) \le \sum_{v\in T} \rk(\SL_s(\F_{q_v})) \le 2s^2\sum_{v\in T} f_v = O(\log x).$$
Here the last estimate follows from the fact that $\prod_{v\in T} p_v^{f_v} = \prod_{v\in T}q_v \le x^c$ (by Proposition \ref{cor1:B}). Thus, by definition of the rank, $\rk(Q/K) = O(\log x)$.

So, we deduce that $\rk(Q) = O(\log x)$.

\medskip

We are left by (\ref{eq_8star}) with bounding $s_x(Q)$ and $s_x(N)$.

\medskip

Let us consider $s_x(Q)$. Recall first:
\begin{prop}\label{prop_84} {\rm (See \cite[Cor. 1.7.5, p. 28]{LS})}
Let $X$ be a finite group. Then
$$ s_n(X) \le n^{\nu(n)+r+1}, $$
where $r = \rk(X)$ and $\nu(n)$ is the number of distinct prime divisors of $n$ (so $\nu(n) = O(\frac{\log n}{\log\log n})$ by the prime number theorem).
\end{prop}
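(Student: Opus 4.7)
Plan. This proposition is due to Lubotzky--Segal; I sketch the strategy. The idea is to count subgroups $H \le X$ by the exact index $m = [X:H]$ and sum over $m \le n$, parametrising each $H$ of index $m$ by a maximal chain of prime-index subgroups whose steps are controlled by the rank hypothesis.

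The local estimate that drives everything is the following. For any finite group $G$ with $d(G) \le r$ and any prime $q$, the index-$q$ subgroups of $G$ are in bijection with the hyperplanes of the $\F_q$-vector space $V = G^{\mathrm{ab}}/q\,G^{\mathrm{ab}}$. Since $\dim V \le d(G) \le r$, there are at most $(q^r-1)/(q-1) < q^r$ such hyperplanes, giving the uniform bound of $q^r$ index-$q$ subgroups. Crucially, because $r = \rk(X)$ bounds $d(G)$ for \emph{every} subgroup $G \le X$, this estimate applies uniformly as one descends along a chain.

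Given $H$ with $[X:H] = m$, fix a maximal chain $X = X_0 > X_1 > \cdots > X_k = H$ in which every link $[X_{i-1}:X_i] = q_i$ is prime. Then $q_1 \cdots q_k = m$, so $k \le \log_2 m$ and each $q_i$ divides $m$, hence $n$. By the local estimate, the number of chains realising a prescribed ordered prime sequence $(q_1,\dots,q_k)$ is at most $\prod_i q_i^r = m^r$. The number of such sequences equals the multinomial coefficient $k!/(a_1!\cdots a_l!)$ (where $m = \prod_{j=1}^l p_j^{a_j}$), which is at most $l^k \le \nu(n)^{\log_2 n} \le n^{\nu(n)}$ (the last inequality uses $\log_2 \nu(n) \le \nu(n)$). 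Summing over $m \le n$ therefore gives
\begin{equation*}
s_n(X) \ \le\ \sum_{m=1}^{n} m^r \cdot n^{\nu(n)} \ \le\ n^{r+1}\cdot n^{\nu(n)} \ =\ n^{\nu(n)+r+1},
\end{equation*}
which is the claimed bound.

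The delicate point, and the main obstacle that a clean chain argument must address, is the very existence of prime-index maximal chains: a non-solvable example such as the trivial subgroup in $X = A_6$ sits inside no proper subgroup of prime index, so not every $H$ is the terminus of such a chain. Lubotzky--Segal circumvent this by allowing prime-\emph{power} jumps $X_{i-1} > X_i$ and controlling each such jump by an induction on the exponent, using the same rank-based local estimate at every step. The resulting quantitative bound is of exactly the same shape $n^{\nu(n)+r+1}$, with the $\nu(n)$-factor arising from the number of primes that can appear in the jumps and the $n^{r+1}$-factor from the rank-$r$ control over each individual link.
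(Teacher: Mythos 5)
The paper cites this result directly to Lubotzky--Segal \cite[Cor.~1.7.5]{LS} and gives no proof, so there is no in-paper argument to compare against; your proposal is an attempted reconstruction, and it breaks at its central ``local estimate.'' You claim that for a prime $q$, the index-$q$ subgroups of a finite group $G$ are in bijection with hyperplanes of $G^{\mathrm{ab}}/qG^{\mathrm{ab}}$, hence number at most $q^{\rk(G)}$. This is false: that correspondence accounts only for \emph{normal} index-$q$ subgroups, and prime-index subgroups need not be normal. Already $G = S_3$ with $q = 3$ is a counterexample --- there are three subgroups of index $3$, but $G^{\mathrm{ab}}\cong\Z/2\Z$ so $G^{\mathrm{ab}}/3G^{\mathrm{ab}}=0$ has no hyperplanes at all. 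You are porting a fact about Frattini quotients of $p$-groups (where every maximal subgroup is normal of index $p$) to a setting where it does not hold, and since this estimate drives every link of the chain count, the argument collapses at the start.

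The second gap you flag yourself but do not close: in a non-solvable group most subgroups do not sit at the bottom of any chain of prime-index links (your $A_6$ example is correct), and the remark that Lubotzky--Segal ``allow prime-power jumps and induct on the exponent'' is a gesture, not an argument --- bounding the number of subgroups of a given prime-power index in a group of bounded rank is exactly where the real content of the proof lies, and it requires structural information about transitive and primitive permutation groups of that degree (input of the same flavour as Theorem~\ref{thm:BCP}), not the abelianization. There is also a smaller counting slip: since $s_n$ counts subgroups of index \emph{at most} $n$, a subgroup of index $m\le n$ can satisfy $\nu(m)>\nu(n)$ (take $n$ a large prime and $m$ highly composite), so the claim ``each $q_i$ divides $m$, hence $n$'' fails, and with it the bound $l^k\le\nu(n)^{\log_2 n}$. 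One must either establish the bound for subgroups of index exactly $n$ and then sum, using $\nu(m)=O(\log m/\log\log m)$ uniformly, or replace $\nu(n)$ by $\max_{m\le n}\nu(m)$ and check the exponent is unaffected. In short, the chain-counting scaffold is a reasonable idea, but the key lemma it rests on is unproved (and the proof offered for it is wrong), and the chain-existence obstruction is acknowledged but not resolved.
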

We can apply Proposition \ref{prop_84} to the profinite group $Q$ to deduce that $s_x(Q)\le x^{c_3\log x}$ as needed. Before moving to bounding $s_x(N)$, let us summarize what we have seen so far as we will use it again later.
\begin{claim} \label{claim:7.5}
With the notations as above, let $T_0$ be a finite set of valuations of $k$ with  $\prod_{v\in T_0} q_v \le x^c$ and let $Q = \prod_{v\in T_0} \Pa_v$. Then $\rk(Q)=O(\log x)$ and $s_x(Q) \le x^{O(\log x)}$.
\end{claim}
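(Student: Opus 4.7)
The plan is to reprise, for an arbitrary finite set $T_0$ satisfying $\prod_{v\in T_0}q_v\le x^c$, the same chain of estimates just carried out for the specific ``bad'' set $T$ of Proposition \ref{cor1:B}. Nothing in that argument relied on $T$ arising from the non-hyperspecial places of a coherent collection; only the hypothesis $\prod_{v\in T}q_v\le x^{c_1}$ and the ambient degree bound $d=d_k=O(\log x)$ were used. The latter is at our disposal because $\mu(H/\Lambda')\le x$ combined with Prasad's formula in \S\ref{sec:ar_vol} forces $\D_k\le x^{O(1)}$, and Corollary \ref{cor:Minkowski} then yields $d_k=O(\log\D_k)=O(\log x)$.

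The concrete steps are as follows. First, I would partition $T_0$ by the rational prime lying underneath: set $S_0=\{p_v:v\in T_0\}$, $A_p=\prod_{v\in T_0,\,v\mid p}\Pa_v$, and write $Q=\prod_{p\in S_0}A_p$. Fixing a $k$-embedding $\G\hookrightarrow\SL_s$ gives $A_p\hookrightarrow\prod_{v\mid p}\SL_s(\cO_v)$. Let $K_p=\Ker(A_p\to\prod_{v\in T_0,\,v\mid p}\SL_s(\F_{q_v}))$. The usual dimension count shows $K_p$ is a $p$-adic analytic pro-$p$ group of dimension at most $s^2\sum_{v\mid p,\,v\in T_0}f_v e_v\le s^2 d_k$; hence, exactly as in the argument for $T$, the normal subgroup $K=\prod_p K_p$ of $Q$ has $l$-rank at most $s^2 d_k=O(\log x)$ for every prime $l$.

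Next, I would bound $\rk(Q/K)$ via Proposition \ref{prop_83}: since $Q/K$ embeds in $\prod_{v\in T_0}\SL_s(\F_{q_v})$, we get $\rk(Q/K)\le 2s^2\sum_{v\in T_0}f_v$. The hypothesis $\prod_{v\in T_0}q_v=\prod_{v\in T_0}p_v^{f_v}\le x^c$, combined with $\log p_v\ge 1$ (all logs being base $2$), gives $\sum_{v\in T_0}f_v\le\sum_{v\in T_0}f_v\log p_v\le c\log x$, so $\rk(Q/K)=O(\log x)$. Adding the two bounds (via Proposition \ref{prop_82} applied to $K\lhd Q$ if one wants a clean justification) yields $\rk(Q)=O(\log x)$.

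Finally, I would apply Proposition \ref{prop_84} to the profinite group $Q$ of bounded rank: $s_x(Q)\le x^{\nu(x)+\rk(Q)+1}$. With $\nu(x)=O(\log x/\log\log x)$ from the prime number theorem and $\rk(Q)=O(\log x)$ from the previous paragraph, the right-hand side is $x^{O(\log x)}$, which proves the claim. There is no real conceptual obstacle here: the statement is a pure repackaging of what was already done for the specific set $T$, and the only point that requires care is the use of the degree bound $d_k=O(\log x)$, which is what keeps the pro-$p$ kernel $K$ logarithmically tame and is the one place where the arithmetic setup of the section genuinely enters.
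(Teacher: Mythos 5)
Your argument is correct and is essentially the paper's own: the claim in the paper is explicitly a summary of the chain of estimates just carried out for the set $T$ (decomposing $Q$ prime by prime, passing to the pro-$p$ kernel $K_p$ of dimension $\le s^2 d_k$, bounding $\rk(Q/K)$ via \cite[Cor.~24, p.~326]{LS}, and then applying \cite[Cor.~1.7.5, p.~28]{LS}), and you reproduce exactly those steps for a general $T_0$, correctly noting that nothing there used any property of $T$ beyond $\prod_{v\in T}q_v\le x^{O(1)}$ and the degree bound $d_k=O(\log x)$. Your explicit justification of $d_k=O(\log x)$ from Prasad's formula and Corollary~\ref{cor:Minkowski} is a detail the paper leaves implicit; the only slight imprecision is the parenthetical invocation of Proposition~\ref{prop_82} to justify $\rk(Q)\le\rk(K)+\rk(Q/K)$ (that proposition bounds $s_n$, not rank), but the rank inequality itself is a standard generating-set argument and the paper glosses it over in the same way.
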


Now we can turn to the more challenging task of bounding $s_x(N)$. This time $N$ is a product of infinitely many groups $N = \prod_{v\not\in T}\Pa_v$. Each group $\Pa_v$ is an extension of a pro-$p$ group $K_v^o$ by an almost simple group $L_v^o = \Pa_v/K_v^o$ of the form $\G_v(\F_{q_v})$, where $\G_v$ is the group scheme over the ring $\cO_v$ in $k_v$ (see \S\ref{sec:BT}).

Let $K^o = \prod_{v\not\in T} K_v^o$ and $L^o = \prod_{v\not\in T} L_v^o$, so $N/K^o \cong L^o$. Let $K_v$ be the preimage in $\Pa_v$ of the center of $L_v^o$ and $K = \prod_{v\not\in T}K_v$.

Following \cite[Window 3, \S2]{LS}, we say that a profinite group {\em $X$ is in $B_k$} (for a fixed $k\in\N$) if no composition factor of $X$ is isomorphic to $\mathrm{Alt}(m)$ with $m>k$ or to a classical finite simple group of Lie type of degree exceeding $k$ (here {\em degree} means the degree of the natural projective representation). Our group $N$, as well as any open subgroup of it, is in $B_k$ for a suitable $k$ depending only on the Lie group $H$ but not on $\Lambda$.

Recall that a {\em chief factor} of a group $X$ is a quotient $A/B$ where $B\subset A$ are both normal subgroups of $X$ and $B$ is a proper subgroup of $A$, maximal with respect to being normal in $X$. In this case $A/B$ is isomorphic to $S^m$ for some finite simple group $S$, and we say that this is a {\em non-abelian chief factor} if $S$ is non-abelian. We will say that $X$ {\em has simple non-abelian chief factors} if for every non-abelian chief factor $A/B$ as above we have $m = 1$. By Jordan-Holder theorem, the groups appearing as chief factors are determined by any chosen chief series. Thus if $X$ has a normal prosolvable subgroup $K$ with $X/K$ isomorphic to a direct product of non-abelian finite simple groups, one can deduce that $X$ has simple non-abelian chief factors. This is clearly the case for our group $N$.

Recall that a subgroup $M$ of $X$ is called {\em subnormal} if there exists a sequence $X = M_0 > M_1 > \ldots > M_m = M$ with $M_{i+1}\lhd M_i$ and $M_{i+1}$ is a maximal normal subgroup of $M_i$, in which case we say that $M$ is a subnormal subgroup of {\em length} $m$ in $X$. The number of non-abelian factors $M_i/M_{i+1}$ will be called the {\em non-abelian length} of $M$ in $X$.
\begin{lemma}\label{lemma_85}
Let $X$ be a profinite group with simple non-abelian chief factors and $M$ a subnormal subgroup of $X$ of non-abelian length $m_0$ in $X$. Let $C(M)$ be the core of $M$, i.e. $C(M) = \cap_{g\in X}M^g$ is the largest subgroup of $M$ which is normal in $X$. Then the non-abelian length of $C(M)$ in $X$ is equal to $m_0$.
\end{lemma}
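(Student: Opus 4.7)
The plan is to reduce the lemma to the statement that $M/C(M)$ is solvable, and then to prove that by induction on the chain length $m$, with the decisive case being $m=2$.

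For the reduction, I will apply Schreier's refinement theorem to the given chain $X = M_0 > \ldots > M_m = M$ concatenated with any subnormal chain from $M$ down to $C(M)$. This produces a subnormal chain from $X$ to $C(M)$ whose non-abelian composition factors are the union of the $m_0$ factors from $X$ to $M$ together with those from $M$ to $C(M)$. By Jordan-Holder, the non-abelian length of $C(M)$ in $X$ equals $m_0$ plus the non-abelian length of $M/C(M)$, so the lemma becomes equivalent to showing that $M/C(M)$ is solvable.

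The heart of the argument will be the length-two case: if $B \lhd A \lhd X$ and $D = \bigcap_{g \in X} B^g$, then $B/D$ is solvable. When the simple quotient $A/B$ is abelian, the subdirect embedding $A/D \hookrightarrow \prod_g A/B^g$ into a product of copies of $A/B$ makes $A/D$ (hence its subgroup $B/D$) abelian. When $A/B \cong S$ is non-abelian simple, each conjugate $B^g$ is a maximal normal subgroup of $A$ (using $A \lhd X$), and the classification of subdirect products of non-abelian simple groups yields $A/D \cong S^r$ for some $r$. Because $A \lhd X$, the group $A/D$ is normal in $X/D$, and $X$ permutes the $r$ simple factors transitively via its action on the conjugates of $B$; thus $A/D$ is a minimal normal subgroup of $X/D$ of the form $S^r$. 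The hypothesis that $X$ has simple non-abelian chief factors forces $r = 1$, so $B \lhd X$, $B = D$, and $B/D$ is trivial.

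For the inductive step I will set $C_1 = \bigcap_{g \in M_1} M^g$, the core of $M$ in $M_1$. I first need to verify that the hypothesis on chief factors descends from $X$ to its normal subgroup $M_1$: a hypothetical non-simple characteristically-simple chief factor $T^n$ of $M_1$ would, after taking $X$-normal closures, force a non-simple chief factor of $X$, contradicting the hypothesis. Granting this descent, the induction hypothesis applied to $M$ viewed as a subnormal subgroup of $M_1$ of length $m-1$ shows $M/C_1$ is solvable. A direct computation identifies $\bigcap_{g \in X} C_1^g$ with $C(M) = \bigcap_{g \in X} M^g$, so the length-two case applied to the pair $C_1 \lhd M_1 \lhd X$ gives $C_1/C(M)$ solvable. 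Hence $M/C(M)$, as an extension of $M/C_1$ by $C_1/C(M)$, is solvable. The main obstacle will be the Goursat and chief-factor bookkeeping in the length-two case, together with the analogous descent argument for $M_1$.
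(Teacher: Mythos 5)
Your reduction to ``$M/C(M)$ is prosolvable'' is correct and matches what the paper is actually proving (the easy direction, that the non-abelian length of $C(M)$ is at least $m_0$, is by concatenating chains; the content is the other inequality). Your proof of the length-two case \emph{when $A/B$ is simple} is also correct, and the non-abelian subcase (subdirect product of copies of $S$, $X$ acting transitively on the factors, simplicity of chief factors forcing a single copy) is exactly the mechanism the paper uses. However, the inductive framework you build around it does not close, for two reasons.

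First, you pass from $X$ to the normal subgroup $M_1$ and invoke the induction hypothesis for $M$ inside $M_1$. This requires knowing that $M_1$ inherits the property of having simple non-abelian chief factors. You acknowledge this and offer the one-line sketch ``taking $X$-normal closures forces a non-simple chief factor of $X$,'' but that is not a proof. The obstruction is real: if $A/B\cong T^n$ is a non-simple chief factor of $M_1$ and one reduces to $\mathrm{core}_X(B)=1$, then the argument splits on whether $\mathrm{core}_X(A)$ covers $A/B$ or is trivial; in the former case one can indeed realize $T^n$ as a transitive $X$-orbit inside an $X$-normal product of simple groups, but the case $\mathrm{core}_X(A)=1$ requires a separate (and not obviously shorter) analysis. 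Stating this descent as a fact, without proof, leaves a genuine gap.

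Second, and more seriously, in the inductive step you apply your ``length-two case'' to the pair $C_1\lhd M_1\lhd X$. But $M_1/C_1$ is not simple in general (it is a rather arbitrary quotient of $M_1$ by the core of $M$), while your proof of the length-two case treats only the case ``$A/B$ simple'' — indeed you refer to ``the simple quotient $A/B$.'' The general statement ``$B\lhd A\lhd X$ implies $B/\mathrm{core}_X(B)$ solvable'' is, after your own reduction, essentially the full lemma applied to $B$ (whose subnormal length in $X$ equals one plus the composition length of $A/B$, which can be arbitrarily large), so invoking it at this point of the induction is circular. The paper sidesteps both difficulties by keeping the ambient group $X$ fixed throughout the induction: it passes from $M_{m-1}$ to $M_m$ by replacing $M_m$ with $\bar M=M_m\cap C(M_{m-1})$ inside the $X$-normal subgroup $C(M_{m-1})$, so that $C(M_{m-1})/\bar M$ embeds in the \emph{simple} group $M_{m-1}/M_m$ and the subdirect-product argument applies directly to the core of $\bar M$ in $X$. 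You would need either to prove the descent lemma and the general length-two statement honestly, or to restructure the induction along these lines.
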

\begin{proof}
Clearly, the non-abelian length of $C(M)$ is at least $m_0$. We prove the converse by induction on $m$ (the length of $M$ in $X$). Assume by the induction hypothesis that the non-abelian length of $C(M_{m-1})$ is $m_1$ and it is at most $m'_0$, which is the non-abelian length of $M_{m-1}$ in $X$.

If $M_{m-1}/M$ is abelian, then $m_0 = m'_0$. The group $\bar{M} = M_m\cap C(M_{m-1})$ is a normal subgroup of $C(M_{m-1})$ (since $M_m\lhd M_{m-1}$ and $C(M_{m-1})\lhd M_{m-1}$) and
$$C(M_{m-1})/\bar{M} \cong C(M_{m-1})M_m / M_m \unlhd M_{m-1}/M_m,$$
so it is abelian. It follows that $C(M_{m-1})/C(M_m)$ is also abelian (since $C(M_m) = C(\bar{M})$ and $C(\bar{M})$ is the intersection of the $X$-conjugates of $\bar{M}$ in $C(M_{m-1})$, where the latter is normal in $X$). Thus the non-abelian length of $C(M)$ is also $m_1$ and we are done with this case.

If $M_{m-1}/M$ is non-abelian, then $m_0 = m_0' + 1$. In the notations of the previous paragraph we get that $C(M_{m-1})/\bar{M}$ is isomorphic to a normal subgroup of a simple group $S = M_{m-1}/M_m$. If it is trivial, then $C(M) = C(M_{m-1})$ and we finish by induction. If not, then $C(M_{m-1})/C(M_m)$ is a product of copies of $S$ on which $X$ acts transitively. But as every non-abelian chief factor of $X$ is simple, there is only one such copy and the non-abelian length of $C(M)$ in $X$ is at most $m_0' + 1 = m_0$.
\end{proof}

Assume now further that $X$ is in $B_k$ and recall an important result of Babai, Cameron and P\'alfy (cf. \cite[Theorem 4, p. 339]{LS}):
\begin{thm}\label{thm:BCP}
Let $Y$ be a primitive permutation group of degree $n$ and $Y\in B_k$. Then $|Y|\le n^{f_1(k)}$, where $f_1(k)$ depends only on $k$.
\end{thm}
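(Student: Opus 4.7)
The strategy is a structural analysis via the O'Nan--Scott theorem. Let $Y$ act faithfully and primitively on a set $\Omega$ of size $n$, with socle $N=\mathrm{Soc}(Y)$. O'Nan--Scott decomposes the pair $(Y,N)$ into five mutually exclusive types: affine (with $N$ elementary abelian), almost simple, simple diagonal, product action, and twisted wreath. I would handle these in order of increasing difficulty, bounding $|Y|$ by $n^{f_1(k)}$ in each case and then taking the maximum. In every step the only use of the $B_k$ hypothesis will be to control simple sections appearing either as the socle $N$ itself or as the linear ``top group'' acting on an elementary abelian socle.

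In the four non-affine types, $N\cong T^m$ for a non-abelian finite simple group $T$, and the $B_k$ hypothesis on $Y$ forces $T\in B_k$. I would invoke the classification of finite simple groups to enumerate the possibilities: finitely many sporadic and exceptional groups of Lie type, alternating groups of degree at most $k$, and classical groups of natural degree at most $k$ over arbitrary $\F_q$. In every case, the minimal faithful permutation degree of $T$ admits a lower bound that is polynomial in $|T|$ with exponent depending only on $k$ (Landazuri--Seitz for the Lie-type cases; trivial for the bounded ones). Combined with $|\mathrm{Aut}(T)|\le |T|^{1+o(1)}$ and with the observation that the factor $m!$ coming from permuting the simple factors of $N$ is dominated by $n$ because $m\le c_k\log n$, the five standard Aschbacher--O'Nan--Scott subcases (diagonal, twisted wreath, and product action in its holomorph/almost simple variants) each yield $|Y|\le n^{f_2(k)}$.

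The hardest case is the affine one, where $n=p^d$, $N=V\cong\F_p^d$, and $Y_0:=Y/V$ embeds as an irreducible subgroup of $\GL_d(\F_p)$ still lying in $B_k$. The task reduces to showing $|Y_0|\le p^{d\,f_3(k)}$. Here I would apply Aschbacher's classification of maximal subgroups of classical linear groups to $Y_0$. Either $Y_0$ sits inside one of the geometric families (stabilizer of a tensor decomposition, imprimitive wreath, subfield, normalizer of an extraspecial group, and so on), in which case I induct on $d$: the $B_k$ hypothesis forces the tensor rank and the permutation index of the wreath factor to be at most $k$, so the reduction step loses only a bounded amount in the exponent; or the generalized Fitting subgroup $F^*(Y_0)$ is quasi-simple with image in $\mathrm{Aut}(L)$ for a simple $L\in B_k$, in which case the Landazuri--Seitz lower bounds on the (cross- or defining-characteristic) projective representation degree of $L$ translate a fixed rank of $L$ directly into a polynomial lower bound on $d$, giving $|L|\le p^{d\,f_4(k)}$.

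The main obstacle is keeping $f_1(k)$ uniform through the Aschbacher induction: a careless recursion could blow up the exponent multiplicatively at each step and destroy the polynomial bound. The resolution is that at every reduction the wreath index, tensor rank, or subfield index is bounded by a function of $k$, so the depth of the recursion is $O(\log d)$ and each step contributes only an additive $O(f_4(k))$ to the final exponent; summing the geometric progression yields a single $f_1(k)$ independent of $d$, and hence of $n$, which is the content of the theorem.
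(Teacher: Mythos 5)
The paper does not prove this statement: it is quoted as Theorem 4, p.\ 339 of Lubotzky--Segal's \emph{Subgroup growth} \cite{LS}, and ultimately comes from the 1982 paper of Babai, Cameron, and P\'alfy. So there is no in-paper proof to compare against; what you have written is a blind reconstruction of a cited external result, and it should be judged on its own terms against the known argument.

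Your plan --- O'Nan--Scott reduction of the primitive group $Y$, CFSG to enumerate the possible non-abelian socles in $B_k$, Landazuri--Seitz to convert bounded Lie rank into polynomial lower bounds on permutation degree, and Aschbacher's theorem for the linear top group in the affine case --- is indeed the standard route, and the non-affine types are handled essentially as you describe. Two places where the affine case as written is imprecise and would need repair. First, you assert that ``the $B_k$ hypothesis forces the tensor rank and the permutation index of the wreath factor to be at most $k$.'' That is not true: in the $\mathcal{C}_2$ (imprimitive) and $\mathcal{C}_7$ (tensor-induced) geometric families the number $t$ of blocks or tensor factors can be arbitrarily large. What the $B_k$ hypothesis actually buys is a bound $|\text{top}|\le c_k^{\,t}$ on the permuting group in $S_t$ (a subgroup of $S_t$ with all composition factors in $B_k$ has order at most $c_k^{\,t}$); this is then absorbed because $t\le d\le\log_2 n$, so $c_k^{\,t}\le n^{\log_2 c_k}$. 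Second, the ``summing the geometric progression'' remark is stated in a way that does not give a bounded exponent: if each of $O(\log d)$ recursion steps contributed an \emph{additive constant} $O(f_4(k))$ to the exponent, the total would be $O(f_4(k)\log d)$, which depends on $n$. The correct accounting is that the loss incurred when passing from $\GL_d$ to a base group in $\GL_u$ is of size $O(\log c_k)/u$ (because it must be compared to $p^{u}$, not $p^{d}$), and since the dimensions $u$ at successive steps decrease at least geometrically, the series $\sum 1/u_i$ converges to $O(1)$, giving a total additive loss that depends only on $k$. Your intuition that a convergent geometric series controls the recursion is right, but the convergence comes from the $1/u$ scaling of each loss, not from the depth being $O(\log d)$. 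With that correction the outline is sound and matches the original BCP argument in structure.
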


We mention in passing that while Theorem \ref{thm:BCP} as stated depends on the classification of the finite simple groups (CFSG), the way we are going to use it here (for profinite groups with ``known'' finite simple factors) is independent of the CFSG.

The important corollary for us is the following:
\begin{prop}\label{prop_87}
If $X$ is in $B_k$ and $D$ is a subgroup of $X$ of index $n$ then there exists a subnormal subgroup $M$ of $X$ contained in $D$ with $[X:M]\le n^{f_1(k)}$.
\end{prop}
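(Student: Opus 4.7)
The plan is to exploit the BCP bound (Theorem \ref{thm:BCP}) by analyzing the transitive action of $X$ on the finite set $X/D$ and decomposing it into a tower of primitive actions via a maximal block system.

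First I would consider the left-multiplication action $X \to \mathrm{Sym}(X/D)$, which is transitive on $n$ points, and choose a maximal chain of blocks $\{D\} = \Delta_0 \subsetneq \Delta_1 \subsetneq \ldots \subsetneq \Delta_m = X/D$. Setting $D_i = \mathrm{Stab}_X(\Delta_i)$ gives a chain $D = D_0 \subsetneq D_1 \subsetneq \ldots \subsetneq D_m = X$ in which $D_{i-1}$ is maximal in $D_i$, and the action of $D_i$ on $D_i/D_{i-1}$ is primitive of degree $n_i := [D_i : D_{i-1}]$, with $\prod_{i=1}^m n_i = n$. Let $N_i = \mathrm{core}_{D_i}(D_{i-1})$; then $N_i \lhd D_i$, $N_i \le D_{i-1}$, and the quotient $D_i/N_i$ is a primitive permutation group of degree $n_i$. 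Since $X \in B_k$ passes to subgroups and quotients, $D_i/N_i$ lies in $B_k$ too, so Theorem \ref{thm:BCP} yields $[D_i : N_i] \le n_i^{f_1(k)}$.

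Next I would assemble the desired subnormal subgroup as an intersection. Set $M_m = X$ and $M_{i-1} = M_i \cap N_i$ for $i = m, m-1, \ldots, 1$; equivalently $M_i = \bigcap_{j > i} N_j$. A short induction shows $M_i \le D_i$ (because $M_i \le N_{i+1} \le D_i$), whence $N_i \cap M_i$ is normal in $M_i$, so the chain $M_0 \lhd M_1 \lhd \ldots \lhd M_m = X$ exhibits $M := M_0$ as a subnormal subgroup of $X$, and by construction $M \le N_1 \le D_0 = D$. For the index, the second isomorphism theorem gives
\[
[M_i : M_{i-1}] \;=\; [M_i : M_i \cap N_i] \;=\; [N_i M_i : N_i] \;\le\; [D_i : N_i] \;\le\; n_i^{f_1(k)},
\]
so telescoping yields $[X:M] = \prod_{i=1}^m [M_i : M_{i-1}] \le \prod_{i=1}^m n_i^{f_1(k)} = n^{f_1(k)}$, as required.

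The main conceptual obstacle is really packaged inside Theorem \ref{thm:BCP} itself; once that is in hand, the argument is essentially bookkeeping. The minor subtlety is justifying that the block tower and the primitive quotients make sense for the profinite $X$, but this is immediate because $D$ has finite index so $X$ acts through a finite permutation group and the whole construction takes place in the finite quotient $X/\mathrm{core}_X(D)$. I would also note that the same argument works with any hereditary class of groups in place of $B_k$ for which a polynomial bound on primitive permutation groups holds, which makes the statement quite robust.
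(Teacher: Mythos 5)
Your proof is correct and follows essentially the same route as the paper's: take a maximal chain of subgroups from $D$ up to $X$, pass to cores to obtain primitive quotients, apply the Babai--Cameron--P\'alfy bound, and telescope the indices. The only difference is organizational: you compute each core $N_i = \mathrm{core}_{D_i}(D_{i-1})$ once and set $M = \bigcap_i N_i$, whereas the paper iterates cores along a shrinking chain $M_i$; your formulation has the modest advantage that primitivity of $D_i/N_i$ acting on $D_i/D_{i-1}$ is immediate from maximality of $D_{i-1}$ in $D_i$, while the paper's version relies on the less transparent assertion that $D_{i+1}\cap M_i$ is maximal in $M_i$.
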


\begin{proof}
Let $X  = D_0 > D_1 > \ldots > D_d = D$ be a sequence of subgroups such that $D_{i+1}$ is a maximal subgroup of $D_i$. Define by induction $M_{i+1}$ to be the core of $D_{i+1}\cap M_i$ in $M_i$ (i.e. the maximal normal subgroup of $M_i$ contained in $D_{i+1}\cap M_i$). Note that either $D_{i+1}\cap M_i = M_i$ (in which case $D_{i+1} \supseteq M_i$ and $M_{i+1} = M_i$) or $D_{i+1}\cap M_i$ is a maximal subgroup of $M_i$ of index at most $[D_i:D_{i+1}]$. The action of $M_i$ on the coset space $M_i / (D_{i+1}\cap M_i)$ is by a primitive permutation group, which is in $B_k$ by our assumption. Thus, Theorem \ref{thm:BCP} implies that $|M_i/M_{i+1}| \le [D_i:D_{i+1}]^{f_1(k)}$, and altogether $|X/M_d| \le [X:D]^{f_1(k)}$.
\end{proof}

Let us now apply all these preparations to the group $N = \prod_{v\not\in T} \Pa_v$. For this group we have an extra property:
\begin{lemma}\label{lemma_88}
Let $M$ be a subnormal subgroup of $N$ with a sequence $N = M_0 \rhd M_1 \rhd \ldots \rhd M_m = M$ in which $M_i/M_{i+1}$ are finite simple groups. Then for every $i$ for which $M_i/M_{i+1}$ is non-abelian, there is a unique $v$ such that $M_i\cap\Pa_v = \Pa_v$ while $M_{i+1}\cap\Pa_v = K_v$. Here $K_v$ is the unique prosolvable subgroup of $\Pa_v$ for which $\Pa_v/K_v$ is a non-abelian finite simple group (such $K_v$ exists since $\Pa_v$ is hyperspecial).
\end{lemma}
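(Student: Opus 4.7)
The plan is to reduce modulo the prosolvable radical $K := \prod_{v\notin T}K_v$ and exploit the structure of the quotient $L := N/K = \prod_{v\notin T}L_v$, a direct product of non-abelian finite simple groups. First I would observe that at any non-abelian step $M_i\rhd M_{i+1}$ one has $M_i\cap K\subseteq M_{i+1}$: since $K\lhd N$, the subgroup $M_i\cap K$ is prosolvable and normal in $M_i$, so its image in the non-abelian simple quotient $M_i/M_{i+1}$ is a prosolvable normal subgroup of a non-abelian finite simple group, hence trivial. In particular $M_i/M_{i+1}\cong \bar M_i/\bar M_{i+1}$, where $\bar M_j:=M_jK/K$.

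Next I would describe the subnormal subgroups of $L$. The projection of a normal subgroup $A\lhd L$ to each $L_v$ is a normal subgroup of the simple group $L_v$, hence trivial or surjective; whenever it is surjective, the commutator identity $[h,a]_w=[h_w,a_w]$ for $h\in L_v$ and $a\in A$ nontrivial at $v$, together with perfectness of $L_v$, places the entire factor $L_v$ inside $A$. Taking closures in the profinite topology gives $A=\prod_{v\in S(A)}L_v$ for some $S(A)\subseteq V_f\setminus T$, and an induction on subnormal length yields the same product form for every subnormal subgroup of $L$. Writing $\bar M_j=\prod_{v\in S_j}L_v$, Step~1 and simplicity force $|S_i\setminus S_{i+1}|=1$ at every non-abelian step; call this unique element $v_i$. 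Uniqueness in the lemma follows: if $M_i\cap\Pa_v=\Pa_v$ and $M_{i+1}\cap\Pa_v=K_v$, then $L_v\subseteq\bar M_i$ while $L_v\not\subseteq\bar M_{i+1}$, forcing $v\in S_i\setminus S_{i+1}=\{v_i\}$.

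Finally I would verify the two intersection equalities for $v_i$. The containment $M_{i+1}\cap\Pa_{v_i}\subseteq K_{v_i}$ is automatic from $v_i\notin S_{i+1}$. For the reverse inclusion and for $\Pa_{v_i}\subseteq M_i$, my plan is to establish by induction on $j$ the stronger invariant
\[
\prod_{v\in S_j}\Pa_v\cdot\prod_{v\notin S_j}K_v\;\subseteq\;M_j,
\]
with the trivial base case $j=0$. At a non-abelian step the invariant is preserved by the Step~1 absorption, which carries the $K$-part across intact while the disappearance of $L_{v_j}$ from $\bar M_j$ to $\bar M_{j+1}$ matches exactly the reduction from $\Pa_{v_j}$ to $K_{v_j}$ on the left side.

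The principal technical obstacle is propagating the invariant through \emph{abelian} simple steps, at which a cyclic quotient of prime order could a priori arise from the abelianization of some $\Pa_v$ and thereby detach elements of $K_v$ from $M_{j+1}$. I expect to resolve this by first enlarging $T$ by a constant depending only on $H$ to ensure each $\Pa_v$ with $v\notin T$ is perfect (which holds for hyperspecial parahorics once the residue field is large enough, and does not affect the bound $\prod_{v\in T}q_v\le x^{C_1}$ of Proposition~\ref{cor1:B}); then every abelian simple quotient in the chain must be supplied from inside $K$, and the maximality and uniqueness of each $K_v$ as prosolvable radical in $\Pa_v$ prevent it from disturbing the invariant at the coordinate $v_i$ where a subsequent non-abelian step is taken.
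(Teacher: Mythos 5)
Your overall strategy --- pass to $L=N/K$, use that closed subnormal subgroups of a product of non\-abelian finite simple groups are sub\-products, and identify $v_i$ from $S_i\setminus S_{i+1}$ --- is a legitimate alternative to the paper's more direct argument, which never passes to $L$ but works straight with $M_i\cap\Pa_v$ and the fact that $K_v$ is the unique maximal proper normal subgroup of $\Pa_v$. Steps~1--3 of your plan are sound. The gap is in Step~4: the invariant $\prod_{v\in S_j}\Pa_v\cdot\prod_{v\notin S_j}K_v\subseteq M_j$ is simply false, and enlarging $T$ so that each $\Pa_v$ is perfect does not repair it. Take $M_1=K_{v_0}\times\prod_{v\neq v_0}\Pa_v$ (a non\-abelian step dropping $L_{v_0}$) and then $M_2=K_{v_0}'\times\prod_{v\neq v_0}\Pa_v$ where $K_{v_0}'\lhd K_{v_0}$ has $K_{v_0}/K_{v_0}'$ cyclic of prime order --- such $K_{v_0}'$ always exists because $K_{v_0}^o$ is a nontrivial pro-$p$ group, regardless of whether $\Pa_{v_0}$ is perfect. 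Here $S_2=S_1$, so your $P_2$ still contains $K_{v_0}$, yet $M_2$ does not; the invariant fails on the $K$-coordinates. The closing sentence of your proposal gestures at the invariant surviving ``at the coordinate $v_i$,'' but gives no mechanism, and your uniqueness argument also quietly relies on it (the assertion $L_v\not\subseteq\bar M_{i+1}$ from $M_{i+1}\cap\Pa_v=K_v$ is not immediate, since $\pi_v(M_{i+1})$ can a priori be larger than $M_{i+1}\cap\Pa_v$).

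What you actually need is only the weaker statement $\Pa_v\subseteq M_j$ for $v\in S_j$; the $K_v$-part of the invariant is neither true nor needed --- once $\Pa_{v_i}\subseteq M_i$ and $\Pa_{v_i}\not\subseteq M_{i+1}$ are known, $M_{i+1}\cap\Pa_{v_i}$ is a proper normal subgroup of $\Pa_{v_i}$, hence $\subseteq K_{v_i}$, and the identification $\Pa_{v_i}/(M_{i+1}\cap\Pa_{v_i})\cong M_i/M_{i+1}$ as a simple quotient forces equality with $K_{v_i}$. The statement $\Pa_v\subseteq M_j$ for $v\in S_j$ does hold, but it requires an argument that ``Step~1 absorption'' does not supply. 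By induction on $j$: given $v\in S_{j+1}$ and $\Pa_v\subseteq M_j$, normality gives $\pi_v(M_{j+1})\lhd\pi_v(M_j)=\Pa_v$, while $\Pa_v\subseteq M_{j+1}K$ gives $\pi_v(M_{j+1})K_v=\Pa_v$; uniqueness of $K_v$ then forces $\pi_v(M_{j+1})=\Pa_v$. Now the commutators $[\Pa_v,M_{j+1}]$ lie in $M_{j+1}\cap\Pa_v$ (since $M_{j+1}\lhd M_j\supseteq\Pa_v$ and $\Pa_v$ is a direct factor) and project onto $[L_v,L_v]=L_v$, which rules out $M_{j+1}\cap\Pa_v\subseteq K_v$ and hence gives $\Pa_v\subseteq M_{j+1}$. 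With this replacement your scheme goes through; as written, however, the central inductive step is broken and the proposed fix does not address the real obstruction.
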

\begin{proof}
First note that $M_i\cap \Pa_v$ is a subnormal subgroup of $\Pa_v$ and $K_v$ is the unique maximal normal subgroup of $\Pa_v$ (see \S\ref{sec:BT}).
For almost every $v$, $M\supseteq\Pa_v$, but for finitely many $v$ this inclusion may not hold, in which case there is a first $i$ such that $M_i\supseteq\Pa_v$, so $M_i\cap\Pa_v = \Pa_v$, but $M_{i+1}\cap\Pa_v \subseteq K_v$. In this case
$$\Pa_v / (M_{i+1}\cap\Pa_v) = M_{i+1}\Pa_v / M_{i+1} \lhd M_i/M_{i+1},$$
and so $\Pa_v / (M_{i+1}\cap\Pa_v) = M_i / M_{i+1}$.

For a given $i$, there is only one such $v$. Indeed, assume $M_i \supseteq \Pa_{v_1}\times\Pa_{v_2}$ but $M_{i+1}\cap\Pa_{v_1}\subseteq K_{v_1}$ and $M_{i+1}\cap\Pa_{v_2}\subseteq K_{v_2}$.  Now,  $M_{i+1}\cap (\Pa_{v_1}\times\Pa_{v_2})$ is a normal subgroup of $\Pa_{v_1}\times\Pa_{v_2}$; looking at it modulo $K_{v_1}\times K_{v_2}$ we get a normal subgroup of the product $\Pa_{v_1}/K_{v_1} \times \Pa_{v_2}/K_{v_2}$ of two non-abelian finite simple groups, which has a trivial intersection with each factor. It is therefore the trivial subgroup, i.e.,
$M_{i+1}\cap(\Pa_{v_1}\times\Pa_{v_2}) \subseteq K_{v_1}\times K_{v_2}$. So
$$ (\Pa_{v_1}\times\Pa_{v_2}) / (M_{i+1}\cap(\Pa_{v_1}\times\Pa_{v_2})) \cong M_{i+1}(\Pa_{v_1}\times\Pa_{v_2})/M_{i+1}.$$
The right hand side is a subnormal subgroup of $M_i/M_{i+1}$ which is a simple group but the left hand side has a quotient $(\Pa_{v_1}\times\Pa_{v_2}) / (K_{v_1}\times K_{v_2})$ which is a product of two simple groups --- a contradiction.

Finally, we note that since all the non-abelian composition factors of $X$ are obtained from the various $\Pa_v/K_v$, it is clear that for every $i$ there is such a place $v$.
\end{proof}

Note that if $E$ is a normal subgroup of $N$, then for every $v\not\in T$, either $E\cap\Pa_v\supseteq\Pa_v$ or $E\cap\Pa_v\subseteq K_v$, in which case $L_v = \Pa_v/K_v$ is one of the non-abelian composition factors appearing in $N/E$.

Before continuing, let us make an observation which will be needed later.
\begin{cor}\label{cor:7.10}
Let $d(N)$ denote the minimal number of generators of the profinite group $N = \prod_{v\not\in T}\Pa_v$. Then $d(N)=O(\log x)$.
\end{cor}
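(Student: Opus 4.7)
The plan is to exploit the short exact sequence
$$1 \longrightarrow K^o \longrightarrow N \longrightarrow L^o \longrightarrow 1$$
together with the standard extension bound $d(N) \le d(K^o) + d(L^o)$ for profinite groups (lift topological generators of the quotient to $N$ and adjoin topological generators of the kernel). The problem therefore reduces to bounding $d(K^o)$ and $d(L^o)$ separately by $O(\log x)$. Throughout, I will use that $d = [k:\Q] = O(\log x)$, which follows from Corollary~\ref{cor:Minkowski} combined with the lower bound on $\mu(H/\Lambda')$ supplied by Prasad's volume formula (and which is anyway implicit in the set-up of Proposition~\ref{cor1:B}).

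For $L^o = \prod_{v\notin T} L_v^o$, each factor $L_v^o \cong \G_v(\F_{q_v})$ is a finite quasi-simple group of Lie type whose Lie type is determined by $H$, hence of bounded Lie rank independent of $v$. Since $\sum_{v\mid p}[k_v:\Q_p] = d$ for every rational prime $p$, at most $d$ places $v$ can share a given residue cardinality $q_v$, so at most $d$ factors $L_v^o$ can be isomorphic to any fixed quasi-simple group. Grouping by isomorphism type writes $L^o \cong \prod_\alpha S_\alpha^{k_\alpha}$ with pairwise non-isomorphic $S_\alpha$ and $k_\alpha \le d$. Distinct non-abelian finite simple groups share no common non-trivial quotient, so $d$ of such a product equals $\sup_\alpha d(S_\alpha^{k_\alpha})$, and a standard counting of generating pairs in direct powers yields $d(S^k) \le 2 + \log_{|S|} k$. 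Hence $d(L^o) = O(\log d) = O(\log\log x)$.

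For $K^o = \prod_{v\notin T}\Kv$, I would decompose by residue characteristic, writing $K^o = \prod_p K^o_{(p)}$ with $K^o_{(p)} = \prod_{v\mid p,\,v\notin T}\Kv$. Each $\Kv$ is a $p$-adic analytic pro-$p$ group of dimension at most $\dm(\G)\cdot[k_v:\Q_p]$, so $d(\Kv) \le \dm(\G)\cdot[k_v:\Q_p]$. Since the Frattini quotient of a finite direct product of pro-$p$ groups is the direct sum of Frattini quotients,
$$d(K^o_{(p)}) \;\le\; \sum_{v\mid p} \dm(\G)\,[k_v:\Q_p] \;=\; \dm(\G)\cdot d \;=\; O(\log x).$$
The groups $K^o_{(p)}$ for distinct $p$ have pairwise coprime pro-orders, and any closed subgroup of such a product that surjects onto each factor must equal the whole product; assembling generators prime-by-prime therefore gives $d(K^o) = \sup_p d(K^o_{(p)}) = O(\log x)$.

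Combining the two estimates yields $d(N) \le d(K^o) + d(L^o) = O(\log x)$, as asserted. The main delicate point is the second paragraph: one must justify replacing ``sum over $\alpha$'' by ``sup over $\alpha$'' in $d(\prod_\alpha S_\alpha^{k_\alpha})$, which is what prevents the estimate from blowing up with the a~priori unbounded number of distinct isomorphism classes of $L_v^o$, and which relies crucially on the absence of common proper quotients among distinct non-abelian finite simple groups. The third paragraph is routine given the Bruhat--Tits smooth-scheme description of $\Pa_v = \G_v(\cO_v)$ and the standard dimension theory of $p$-adic analytic pro-$p$ groups.
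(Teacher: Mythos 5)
Your proof is correct and follows essentially the same route as the paper's: the same short exact sequence $1\to K^o\to N\to L^o\to 1$, with $K^o$ handled by grouping places by residue characteristic and invoking rank/dimension bounds for $p$-adic analytic pro-$p$ groups, and $L^o$ handled by bounding the multiplicity of each isomorphism type of quasi-simple quotient by $d=O(\log x)$. The paper's version is terser (it simply observes that each $\mathcal{K}_p$ sits inside a uniform pro-$p$ group of dimension $O(\log x)$ and that each quasi-simple group appears in $L^o$ with multiplicity $O(\log x)$), whereas you supply the routine details and even sharpen $d(L^o)$ to $O(\log\log x)$, but the underlying argument is the same.
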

\begin{proof}
Indeed, $K^o$ is a product of infinitely many $p$-adic analytic pro-p groups ${\mathcal K}_p = \prod_{v|p}K_v^o$, but for every $p$, ${\mathcal K}_p$ is a subgroup of a uniform pro-p group of dimension bounded by $O(\log x)$ and hence $d(K^o)=O(\log x)$. The quotient $N/K^o$ is an infinite product of finite quasi-simple groups. The multiplicity of each one is bounded by $O(\log x)$ and hence $d(N/K^o)=O(\log x)$. Altogether $d(N)$ is also bounded by a constant multiple of $\log x$.
\end{proof}

We are now ready to bound $s_x(N)$: If $D$ is a subgroup of index at most $x$ in $N$, then by Proposition \ref{prop_87} it contains a subnormal subgroup $M$ of $N$ of index at most $x^c$. The non-abelian composition factors between $M$ and $N$ correspond to a finite set $T_1$ of valuations $v\not\in T$, and since $q_v \le |L_v|\le q_v^{\dm(\G)}$, it follows that $\prod_{v\in T_1}q_v \le x^{c_1}$.
Let $C(M)$ be the core of $M$. By Lemma \ref{lemma_85} it has the same non-abelian finite simple composition factors. Moreover, from the discussion above it follows that $C(M)$ contains $\Pa_v$ for every $v\not\in T\cup T_1$.

Now note that the number of possibilities for $T_1$ is bounded by $x^{c_2}$ (this follows from $\prod_{v\in T_1}q_v \le x^{c_1}$ and \cite[Sec. 4.1 and Prop. 3.2(ii)]{B}), and so we can fix $T_1$ and reduce the problem to estimating $s_x(\prod_{v\in T_1} \Pa_v)$. This brings us to the situation which was already considered in this section.
The required estimate is provided by Claim \ref{claim:7.5}. This finishes the proof of the upper bound of Theorem \ref{theorem} under the assumption that $C = \Ker(\hat{\Lambda} \to \prod\Pa_v)$ is trivial.

\medskip

Let us now explain how to handle the case when $C$ is non-trivial.
First recall that the CSP and MP imply that it is always cyclic --- a subgroup of $\mu(k)$ --- the group of roots of unity in $k$ (cf.~\cite[Theorem~2]{PR10}). Recall that if $\mu_n$ is the group of $n$-roots of unity, then $\Q[\mu_n]/\Q$ is an extension of degree $\phi(n)$ which is at least $\sqrt{n}$. This implies that $\mu(k)$ is of order bounded by $O(d_k^2) = O(\log^2 x)$. Thus $C$ is of order $O(\log^2 x)$.
Secondly, note that along the way of the proof we saw that every subgroup $M$ of $N$ of index at most $x$ contains an infinite product $N_1 = \prod_{v\not\in T_1} \Pa_v$, where $T_1$ satisfies $\prod_{v\in T_1} q_v \le x^c$. Therefore, the number of generators $d(M) \le d(N_1) + \rk(Q_1)$, where $Q_1 = N/N_1 = \prod_{v\in T_1}\Pa_v$. By Corollary \ref{cor:7.10}, $d(N_1)$ is bounded by $O(\log x)$, and by Claim \ref{claim:7.5}, $\rk(Q_1)$ is bounded by $O(\log x)$. Hence, $d(M) = O(\log x)$.
Moreover, using again Claim \ref{claim:7.5} we deduce that every subgroup $M$ of $\hat{\Lambda}/C$ of index at most $x$ can be generated by at most $c'\log x$ elements. We can now apply Lemma~1.3.1(i) from \cite[p.~15]{LS}: As $C$
acts trivially on the group $\hat{\Lambda}$, derivations are just homomorphisms and it follows
that the number of subgroups of $\hat{\Lambda}$ whose projection in $\hat{\Lambda}/C$ is $M$ is bounded by $|C|^{d(M)} \le (\log x)^{c'\log x}$. This  finishes the proof of Theorem \ref{theorem}. \qed

\section{Growth of lattices in semisimple Lie groups}\label{section:semisimple}\label{sec:semi thm1}

In this final section we are going to discuss how to extend the results of the paper to semisimple Lie groups. Given such a group $H$ it is natural to consider only {\em irreducible} lattices in $H$, so from now on $\mathrm{L}_H(x)$ denotes the number of conjugacy classes of irreducible lattices in $H$ of covolume at most $x$.
% The same refers to $\ru$, $\rnu$ and other notations from \S1.
We recall (see \S\ref{sec:ar}) that $H$ contains irreducible lattices only if it is isotypic, and that we can assume that $H = (\prod_{j=1}^{a}\G_j(\R) \times \G_{a+1}(\C)^{b})^o$ for some absolutely simple $\R$-groups $\G_j$, $j = 1, \dots, a+1$.

To obtain an analogue of the lower bound of Theorem \ref{theorem} which was proved in \S\ref{sec:thm1 lower} for a simple group $H$, we need to modify the choice of the fields of definition $(k_i)$: now the fields have to be chosen so that
$$ r_1(k_i) \ge a,\ r_2(k_i) = b \text{ and } \D_{k_i}^{1/d_i}\le c_0.$$
This can be always achieved using Corollary \ref{cor:Pisot}.

For each of the fields $k = k_i$ we have to define an extension $l$ as in \S\ref{sec:thm1 lower}. Let $\G$ be an admissible group (in the sense of \S\ref{sec:ar}) defined over $k$, and suppose that $\G$ is inner over $k_{v_j}$ for some $t_1$ real places $v_j$ of $k$ and is outer over the remaining $t_2 = r_1(k) - t_1$ real places. We note that either $t_1$ or $t_2$ depend only on the Lie group $H$ (the former is the case when the compact real form of the simply connected covers of the simple factors of $H$ is outer, and the latter, if it is inner). If $t_2 = 0$ (i.e. $\G$ is an inner form over $k$), we let $l = k$. Otherwise, $l$ is defined as a quadratic extension of $k$ such that precisely $t_1$ real places of $k$ split in $l$. Similarly to \S\ref{sec:thm1 lower}, we can always achieve that $\D_{l/k} \le {c'_0}^{d}$: If $k$ is a totally real field, we can take $l = k[\sqrt{-(1-\theta_1)\ldots(1-\theta_{t_1})}]$ or
$l = k[\sqrt{(1-\theta_1)\ldots(1-\theta_{t_2})}]$ depending on the above mentioned two cases, where $\theta_1$, \ldots, $\theta_t$ are Pisot numbers in $k$ provided by Lemma \ref{lemma:Pisot}(b). If $k$ has complex places and $t_2\neq 0$, we can first consider its maximal totally real subfield $k'$, using Pisot numbers define its quadratic extension $l'$ which splits $t_1$ infinite places of $k'$ (which correspond to real places of $k$ in the extension $k/k'$) and has $\D_{l'/k'}\le {c_1'}^{d(k')}$, and then define $l$ as a compositum of $k$ and $l'$.

With such fields $k$ and $l$ at hand we can repeat the rest of the argument in \S\ref{sec:thm1 lower} and thus show that the lower bound in Theorem \ref{theorem} is valid for any semisimple group $H$ which contains irreducible lattices (i.e. for any isotypic semisimple Lie group).

The proof of the upper bound in \S\ref{sec:uniform upper bnd} does not use the assumption that the Lie group is simple and can be applied without any changes to semisimple groups $H$ assuming validity of the congruence subgroup property and Margulis-Platonov conjecture.

Thus we obtain the following generalization of Theorem \ref{theorem} to semisimple Lie groups.
\begin{thm} \label{thm:semi}
Let $H$ be an isotypic semisimple Lie group of real rank at least $2$. Then
\begin{itemize}
\item[(i)] There exists a positive constant $a$ such that $\ru \ge x^{a\log
x}$ for all sufficiently large $x$.
\item[(ii)] Assuming the CSP and MP, there exists a positive constant $b$ such that $\ru \le x^{b\log x}$ for
all sufficiently large $x$.
\end{itemize}
\end{thm}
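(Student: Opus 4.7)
The plan is to mimic the two halves of the proof of Theorem \ref{theorem}, adapting the arithmetic input to the semisimple setting. Since $H$ is isotypic with $a$ real and $b$ complex simple factors of a common absolute type, Margulis arithmeticity gives that every irreducible lattice in $H$ comes from an admissible pair $(\G,k)$, where $k$ has at least $a$ real places and exactly $b$ complex places. So the counting reduces, exactly as in \S\ref{section:arithmetic}, to Prasad's volume formula and to the Bruhat--Tits analysis of \S\ref{section:covers vs manifolds}, neither of which used simplicity of $H$ anywhere.

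For the lower bound, the first step is to produce an asymptotically bounded sequence $(k_i)$ of number fields with $r_1(k_i)\ge a$ and $r_2(k_i)=b$, i.e.\ with bounded root discriminant and the correct signature. This is exactly the content of Corollary \ref{cor:Pisot} (applied with $t=b$, after possibly iterating the Pisot construction so that the real signature is at least $a$). Next, for each $k=k_i$ I need to find the auxiliary quadratic extension $l/k$ which governs the splitting behaviour of the quasi-split inner form of $\G$. The Galois-cohomological existence statement analogous to the proposition in \S\ref{sec:thm1 lower} (based on \cite{BH} and \cite[Theorem 1(i)]{PR06}) says one can prescribe independently, at each real place, whether $\G$ is inner or outer; for the isotypic group $H$ the numbers $t_1$, $t_2$ of such real places are determined by whether the compact form of the simply connected cover of the simple factors of $H$ is inner or outer. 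The quadratic extension $l/k$ then has to split exactly $t_1$ of the real places of $k$. The key technical point is the bound $\D_{l/k}\le (c_0')^d$: if $k$ is totally real one uses $l=k[\sqrt{\pm(1-\theta_1)\cdots(1-\theta_{t_?})}]$ with $\theta_j$ the Pisot numbers of Lemma \ref{lemma:Pisot}(b); if $k$ has complex places one first passes to the maximal totally real subfield $k'$, constructs the analogous $l'/k'$ there, and takes $l=k\cdot l'$. Once $k$ and $l$ are in place, the remainder of \S\ref{sec:thm1 lower}---Prasad's formula, the analysis of the Euler product, the construction of an elementary abelian $p'$-group of rank $\ge d$ inside $\Lambda/\Lambda(p')$, and the conjugacy-separation via Corollary \ref{cor_s3}---runs verbatim, because none of those steps used that $H$ was simple, only that $H$ is isotypic and that the covolume of $\Lambda'$ is controlled by $c_1^{d_i}$ via the volume formula. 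This yields $\ru\ge x^{a\log x}$.

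For the upper bound, I would simply verify that the argument of \S\ref{section:upper bounds} transports without change. The ingredients used there are: Theorem \ref{thm:B} of \cite{B} on the count of maximal lattices (which is already stated for semisimple $H$); Propositions \ref{cor1:B} and \ref{cor:B} controlling the bad places $T$ and the index $[\Gamma:\Lambda']$; Proposition \ref{prop_82} together with the Babai--Cameron--P\'alfy Theorem \ref{thm:BCP} applied to the profinite completion $\hat\Lambda$, using that $\hat\Lambda$ lies in $B_k$ for some $k$ depending only on the type of the simple factors of $H$; and the structure of hyperspecial parahoric subgroups (with their unique maximal normal prosolvable subgroup giving a non-abelian simple quotient), which is a local statement about $\G(k_v)$ and is insensitive to whether $H$ is simple or semisimple. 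The only hypothesis used on $H$ beyond the arithmeticity of lattices is CSP, and that is precisely what we are assuming in (ii). Hence $\ru\le x^{b\log x}$.

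The main obstacle, as in the simple case, is the lower bound: concretely, arranging simultaneously the signature conditions $r_1(k)\ge a$, $r_2(k)=b$ with bounded root discriminant, and then constructing the quadratic $l/k$ with prescribed splitting of $t_1$ real places while keeping $\D_{l/k}\le (c_0')^d$. The Pisot-number machinery of Lemma \ref{lemma:Pisot}(b) is exactly what is needed, but one has to be a little careful: in the mixed-signature case, one cannot directly pick Pisot numbers in $k$ to flip the signs at real places of $k$ without disturbing the complex places, and this is why I would pass through the maximal totally real subfield $k'$ and build $l'/k'$ there before composing with $k$. Once that is done, the proof mechanics developed in \S\S\ref{section:NT}--\ref{section:upper bounds} apply mutatis mutandis to give both parts of Theorem \ref{thm:semi}.
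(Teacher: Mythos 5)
Your proposal is correct and follows essentially the same route as the paper's own argument: use Corollary~\ref{cor:Pisot} to get fields of bounded root discriminant with $r_2=b$ (and $r_1\ge a$ for $i$ large, no iteration needed since $r_1=d_i-2b\to\infty$), build the auxiliary quadratic $l/k$ splitting exactly $t_1$ real places via Pisot numbers~--- passing through the maximal totally real subfield when $k$ is not totally real~--- and then observe that the lower-bound machinery of \S\ref{sec:thm1 lower} and the upper-bound machinery of \S\ref{section:upper bounds} never used that $H$ was simple. This is precisely what the paper does in \S\ref{section:semisimple}.
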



\begin{thebibliography}{xxxx}

\bibitem[B1]{B} M. Belolipetsky, Counting maximal arithmetic subgroups (with an appendix by J. Ellenberg and A. Venkatesh), {\em Duke Math. J.} {\bf 140} (2007), no.~1, 1--33.

\bibitem[B2]{B2} M. Belolipetsky, Geodesics, volumes and Lehmer's conjecture, {\em Oberwolfach Reports} {\bf 7} (2010), 2136--2139.

\bibitem[BGLS]{BGLS} M. Belolipetsky, T. Gelander, A. Lubotzky, A. Shalev, Counting arithmetic lattices and surfaces,
{\em Ann. of Math.} {\bf 172} (2010), 2197--2221.
%    {\em preprint} arXiv:0811.2482v1 [math.GR].

\bibitem[BL1]{BL1} M. Belolipetsky, A. Lubotzky, Finite groups and hyperbolic manifolds,
{\em Invent. Math.} {\bf 162} (2005), 459--472.

\bibitem[BL2]{BL2} M. Belolipetsky, A. Lubotzky, Counting non-uniform lattices, {\em in preparation}.

\bibitem[BDGPS]{BDGPS} M. J. Bertin, A. Decomps-Guilloux, M. Grandet-Hugot, M. Pathiaux-Delefosse,
and J. P. Schreiber, {\em Pisot and Salem Numbers}, Birkha\"user Verlag, Basel, 1992.

\bibitem[Bo1]{Bo1} A. Borel, Compact Clifford-Klein forms of symmetric spaces, {\em Topology}
{\bf 2} (1963), 111--122.

\bibitem[Bo2]{Bo2} A. Borel, Commensurability classes and volumes of hyperbolic
3-manifolds, {\em Ann. Scuola Norm. Sup. Pisa (4)} {\bf 8} (1981), 1--33.

\bibitem[BH]{BH} A. Borel, G. Harder, Existence of discrete cocompact subgroups of reductive groups
over local fields, {\em J. Reine Angew. Math.} {\bf 298} (1978), 53--64.

\bibitem[BP]{BP} A. Borel, G. Prasad, Finiteness theorems for discrete subgroups of bounded covolume
in semi-simple groups, {\em Inst. Hautes \'Etudes Sci. Publ. Math.} {\bf 69}
(1989), 119--171. Addendum: {\em ibid.,} {\bf 71} (1990), 173--177.

\bibitem[BT]{BT} F. Bruhat, J. Tits, Groupes r\'eductifs sur un corps local, I; II,
{\em Inst. Hautes \'Etudes Sci. Publ. Math.} {\bf 41} (1972), 5--251; {\bf 60}
(1984), 5--184.

\bibitem[BGLM]{BGLM} M. Burger, T. Gelander, A. Lubotzky, S. Mozes, Counting hyperbolic manifolds,
{\em Geom. Funct. Anal.} {\bf 12}~(2002), 1161--1173.

\bibitem[CR]{CR} V. I. Chernousov, A. A. Ryzhkov,
On the classification of maximal arithmetic subgroups of simply connected groups,
% (Russian) {\em Mat. Sb.,} {\bf 188} (1997), no. 9, 127--156; translation in
{\em Sb. Math.} {\bf 188} (1997), 1385--1413.

\bibitem[DDMS]{DDMS} D. Dixon, M. P. F. du Sautoy, A. Mann, and D. Segal, {\em Analytic Pro-p Groups, Second edition}, Cambridge Stud. Adv. Math. {\bf 61}, Cambridge Univ. Press, Cambridge, 1999.

\bibitem[G1]{G1} T. Gelander, Homotopy type and volume of locally symmetric manifolds,
{\em Duke Math. J.} {\bf 124} (2004), 459--515.

\bibitem[G2]{G2} T. Gelander, Non-compact arithmetic manifolds have simple homotopy
type, {\em preprint} arXiv:math/0111261v1 [math.DG].

\bibitem[GLNP]{GLNP} D. Goldfeld, A. Lubotzky, N. Nikolov, L. Pyber, Counting primes, groups and
manifolds, {\em Proc. of National Acad. of Sci.} {\bf 101} (2004), 13428--13430.

\bibitem[GLP]{GLP} D. Goldfeld, A. Lubotzky, L. Pyber, Counting congruence subgroups,
{\em Acta Math.} {\bf 193} (2004), 73--104.

\bibitem[GS]{GS} E. S. Golod, I. P. Shafarevich, % I. R. \v Safarevi\v c,
On the class field tower, {\em Izv. Akad. Nauk SSSR Ser. Mat.} {\bf 28}
(1964), 261--272 [Russian].

\bibitem[HM]{HM} F. Hajir, C. Maire, Tamely ramified towers and discriminant bounds for number
fields II, {\em J. Symbolic Comput.} {\bf 33} (2002), 415--423.

\bibitem[La]{La} S. Lang, {\em Algebraic number theory}, Addison-Wesley, 1970.

\bibitem[Lu]{Lu} A. Lubotzky, Subgroup growth and congruence subgroups, {\em Invent Math.} {\bf 119}
(1995), 267--295.

\bibitem[LN]{LN} A. Lubotzky, N. Nikolov, Subgroup growth of lattices in semisimple Lie groups,
{\em Acta Math.} {\bf 193} (2004), 105--139.

\bibitem[LS]{LS} A. Lubotzky, D. Segal, {\em Subgroup growth}, Progr. Math.
{\bf 212}, Birkh\"auser Verlag, Basel, 2003.

\bibitem[Mg]{Ma} G. A. Margulis, {\em Discrete subgroups of semisimple Lie groups},
Ergeb. Math. Grenzgeb. (3) {\bf 17}, Springer-Verlag, Berlin, 1991.

\bibitem[Mt]{Martinet} J. Martinet, Tours de corps de classes et estimations de discriminants,
{\em Invent. Math.} {\bf 44} (1978), 65--73.

\bibitem[On]{Ono} T. Ono, On algebraic groups and discontinuous groups,
{\em Nagoya Math. J.} {\bf 27} (1966), 279--322.

\bibitem[PlR]{PR} V. P. Platonov, A. S. Rapinchuk,
{\em Algebraic groups and number theory},
Pure Appl. Math. {\bf 139}, Academic Press, Boston, 1994.

\bibitem[P]{Pr} G. Prasad, Volumes of $S$-arithmetic quotients of semi-simple groups,
{\em Inst. Hautes \'Etudes Sci. Publ. Math.,} {\bf 69} (1989), 91--117.

\bibitem[PR1]{PR06} G. Prasad, A. S. Rapinchuk, On the existence of isotropic forms
of semi-simple algebraic groups over number fields with prescribed local
behavior, {\em Adv. Math.} {\bf 207} (2006), 646--660.

\bibitem[PR2]{PR10} G. Prasad, A. S. Rapinchuk, Developments on the congruence subgroup
problem after the work of Bass, Milnor and Serre,
% {\em preprint} arXiv:0809.1622,
in {\em John Milnor's collected works}, vol. V, AMS, 2010, 307--325.

\bibitem[Ra]{Rag} M. S. Raghunathan, {\em Discrete Subgroups of Lie Groups},
Springer, New York, 1972.

\bibitem[Ro]{Rohlfs} J. Rohlfs, Die maximalen arithmetisch definierten Untergruppen zerfallender einfacher Gruppen, {\em Math. Ann.,} {\bf 244} (1979), 219--231.

\bibitem[Sa]{Sa} A. Salehi Golsefidy, Counting lattices in simple Lie groups: the positive characteritic case, {\em preprint} arXiv:1109.6427v1 [math.GR].

\bibitem[S]{S1} J.-P. Serre, Le probl\`eme des groupes de congruence pour $\SL_2$,
{\em Ann. of Math.} {\bf 92} (1970), 489--527.

\bibitem[T1]{Tits9} J. Tits, Classification of algebraic semisimple groups, Proc. Symp. in Pure Math. {\bf 9} (1966), 33--62.

\bibitem[T2]{Tits} J. Tits, Reductive groups over local fields, Proc. Symp. Pure Math. {\bf 33} (1979), Part~I, 29--69.

\bibitem[Wa]{Wa} H. C. Wang, Topics on totally discontinuous groups, in {\em Symmetric spaces (St. Louis, Mo., 1969--1970)}, Pure Appl. Math. {\bf 8},
    Dekker, New York, 1972, 459--487.

\bibitem[We]{We} A. Weil, Sur les ``formules explicites'' de la th\'eorie des nombres premiers,
{\em Comm. S\'em. Math. Univ. Lund}, tome suppl\'ementaire d\'edi\'e à Marcel Riesz (1952), 252--265.
% in {\em Oeuvres Scientifiques}, Volume II, Springer-Verlag, 48--61.

\end{thebibliography}
\end{document}